\newcommand{\iso}{\cong}
\numberwithin{equation}{section}
\newtheorem{thm}[equation]{Theorem}
\newtheorem{lem}[equation]{Lemma}
\newtheorem{cor}[equation]{Corollary}
\newtheorem{prop}[equation]{Proposition}
\theoremstyle{definition}
\newtheorem{defn}[equation]{Definition}
\newtheorem{ex}[equation]{Example}
\theoremstyle{remark}
\newtheorem{rem}[equation]{Remark}
\theoremstyle{remark}
\newtheorem{rems}[equation]{Remarks}
\newcommand{\ad}{{\rm ad}}
\newcommand{\GL}{{\rm GL}}
\newcommand{\SL}{{\rm SL}}
\newcommand{\PGL}{{\rm PGL}}
\newcommand{\frakgl}{\mathfrak {gl}}
\newcommand{\fraksl}{\mathfrak {sl}}
\renewcommand{\Im}{{\rm Im}}
\renewcommand{\char}{{\rm char}}
\newcommand{\frakc}{\mathfrak c}
\newcommand{\frakg}{\mathfrak g}
\newcommand{\frakh}{\mathfrak h}
\newcommand{\frakp}{\mathfrak p}
\newcommand{\frakk}{\mathfrak k}
\newcommand{\frakl}{\mathfrak l}
\newcommand{\frakm}{\mathfrak m}
\newcommand{\frakn}{\mathfrak n}
\newcommand{\fraks}{\mathfrak s}
\newcommand{\frakz}{\mathfrak z}
\newcommand{\NN}{\mathbb N}
\newcommand{\Gm}{\mathbb{G}_m}
\newcommand{\calN}{\mathscr N}
\newcommand{\calC}{\mathscr C}
\newcommand{\twobytwo}[4]{{\left(\begin{array}{ll} #1 & #2 \\ #3 & #4 \\\end{array}\right)}}
\newcommand{\threebythree}[9]{{\left(\begin{array}{lll} #1 & #2 & #3 \\ #4 & #5 & #6 \\ #7 & #8 & #9 \end{array}\right)}}
\DeclareMathOperator{\Lie}{Lie}
\DeclareMathOperator{\Inn}{Inn}
\DeclareMathOperator{\Ad}{Ad}
\DeclareMathOperator{\rad}{rad}
\newcommand{\ovl}{\overline}
\newcommand{\tuple}[1]{{\mathbf {#1}}}
\subjclass[2010]{20G15 (14L24)}
\keywords{Semisimplification, $G$-complete reducibility, geometric invariant theory, rationality, cocharacter-closed orbits, degeneration of $G$-orbits}
\dedicatory{To the memory of Irina Suprunenko} 
\title[Complete reducibility for Lie subalgebras and semisimplification]
{Complete reducibility for Lie subalgebras and semisimplification} 
 \author[M.\ Bate]{Michael Bate}
 \address
 {Department of Mathematics,
 	University of York,
 	York YO10 5DD,
 	United Kingdom}
 \email{michael.bate@york.ac.uk}
\author[S. B\"ohm]{S\"oren B\"ohm}
\address
{Fakult\"at f\"ur Mathematik,
	Ruhr-Universit\"at Bochum,
	D-44780 Bochum, Germany}
\email{soeren.boehm@rub.de}
 \author[B.\ Martin]{Benjamin Martin}
 \address
 {Department of Mathematics,
 	University of Aberdeen,
 	King's College,
 	Fraser Noble Building,
 	Aberdeen AB24 3UE,
 	United Kingdom}
 \email{b.martin@abdn.ac.uk}
\author[G. R\"ohrle]{Gerhard R\"ohrle}
\address
{Fakult\"at f\"ur Mathematik,
Ruhr-Universit\"at Bochum,
D-44780 Bochum, Germany}
\email{gerhard.roehrle@rub.de}
\author[L. Voggesberger]{Laura Voggesberger}
\address
{Fakult\"at f\"ur Mathematik,
	Ruhr-Universit\"at Bochum,
	D-44780 Bochum, Germany}
\email{laura.voggesberger@rub.de}
\begin{document}

\begin{abstract}
	 Let $G$ be a connected reductive linear algebraic group over a field $k$.  Using ideas from geometric invariant theory, we study the notion of $G$-complete reducibility over $k$ for a Lie subalgebra $\frakh$ of the Lie algebra $\frakg = \Lie(G)$ of $G$ and prove some results when $\frakh$ is solvable or $\char(k)= 0$.  We introduce the concept of a \emph{$k$-semisimplification} $\frakh'$ of $\frakh$; $\frakh'$ is a Lie subalgebra of $\frakg$ associated to $\frakh$ which is $G$-completely reducible over $k$. 
	This is the Lie algebra counterpart of the analogous notion for subgroups studied earlier by the first, third and fourth authors.
	As in the subgroup case, we show that $\frakh'$ is unique up to $\Ad(G(k))$-conjugacy in $\frakg$. Moreover, we prove that the two concepts are compatible: 
	for $H$ a closed subgroup of $G$ and $H'$ a $k$-semisimplification of $H$, the Lie algebra $\Lie(H')$ is a $k$-semisimplification of $\Lie(H)$.
\end{abstract}

\maketitle

\section{Introduction}
\label{sec:intro}

Let $G$ be a connected reductive linear algebraic group over an arbitrary field  $k$. We revisit the notion of \emph{$G$-complete reducibility over $k$} for a 
Lie subalgebra $\frakh$ of the Lie algebra $\frakg:= \Lie(G)$ of $G$ from \cite[Def.\ 5.3]{sphericalcochar}.
A central theme of the present paper is an extension 
to the setting of Lie algebras
of the construction of a 
\emph{$k$-semisimplification} of a subgroup $H$ of $G$ from \cite{BMR:semisimplification}.
For motivation and an overview of the latter and further references, we refer to the introduction of \cite{BMR:semisimplification}.
The concept of the semisimplification of a module for a group or algebra is a well-known construction in representation theory.
Building on this idea, for $H$ a subgroup of $G(k)$, Serre introduced the concept of a ``$G$-analogue'' of semisimplification from representation theory in \cite[\S 3.2.4]{serre2}.
As in the subgroup case, one can think of the idea of a $k$-semisimplification of a subalgebra of $\frakg$ as an analogue of the 
Jordan-H\"older Theorem.

The definition of a \emph{$k$-semisimplification} of a Lie subalgebra of $\frakg$ (Definition~\ref{defn:liess}) for arbitrary $k$ is new and 
generalizes the one for subgroups of $G$ (Definition~\ref{defn:ss}).
We prove that the $k$-semisimplification of a subalgebra $\frakh$ of $\frakg$ is unique up to $\Ad(G(k))$-conjugacy (Theorem~\ref{thm:liemain}). 
Moreover, for $H$ a closed subgroup of $G$ and $H'$ a $k$-semisimplification of $H$, we show that $\Lie(H')$ is a $k$-semisimplification of $\Lie(H)$ (Theorem~\ref{thm:ssLievsGp}).

To prove Theorem~\ref{thm:liemain} we use the theory of $G$-complete reducibility for subalgebras of $\frakg$ and some ideas from geometric invariant theory.  In Theorem~\ref{thm:lienormal} we show that when ${\rm char}(k)$ is large enough, an ideal of a $G$-completely reducible subalgebra $\frakh$ is $G$-completely reducible and the process of $k$-semisimplification behaves well under passing to ideals of $\frakh$.

We also prove some results of independent interest about $G$-complete reducibility for subalgebras, including Proposition~\ref{prop:finite}.  We consider solvable subalgebras in Section~\ref{sec:toral_solvable}.  Theorem~\ref{thm:Lie_Gcr_crit} gives a necessary condition for a subalgebra to be $G$-completely reducible when $\char(k)=p > 0$ is sufficiently large, and yields a characterisation of $G$-completely reducible subalgebras when $\char(k)= 0$.  We discuss some related results of Richardson, who pioneered the application of  geometric invariant theory to the study of $G$-complete reducibility.

The notion of $G$-complete reducibility for subalgebras of $\frakg$ was first defined by McNinch \cite{mcninch} for algebraically closed $k$ and was developed further in \cite[\S 3]{BMRT:relative} by the first, third and fourth authors; the non-algebraically closed case was first studied in \cite[\S 5]{sphericalcochar}.  The approach via geometric invariant theory stems from work of Richardson \cite{rich}, who studied subgroups of $G$ and subalgebras of $\Lie(G)$ --- mainly for algebraically closed $k$ in characteristic 0.  Some of our arguments involve extending constructions from \emph{op.\ cit.}\ to positive characteristic.  We also use more recent techniques from \cite{BMR} and \cite{BMR:semisimplification} including some deep results from the theory of spherical buildings.

The theory of $G$-complete reducibility for Lie subalgebras closely parallels the theory of $G$-complete reducibility for subgroups \cite{BMR}, \cite{BMR:semisimplification}.  Some arguments are easier in the subalgebra case, as we need not worry about non-connected groups.  There are, however, some extra problems for subalgebras, which can be traced back to the failure of certain normalisers to be smooth.  It is to avoid these difficulties that we need assumptions on the characteristic of $k$: see Section~\ref{subsec:smooth}.

\section{Preliminaries}
\label{sec:prelims}

\subsection{Basic notation}
Let $k$ be a field with $\char(k) = p \ge 0$.  Let $\ovl{k}$ and $k_s$ be the algebraic closure and separable closure of $k$, respectively.
Following \cite{borel}, \cite{GIT}, and \cite{cochar}, we regard an affine variety over a field $k$ as a variety $X$ over the algebraic closure $\ovl{k}$ together with a choice of $k$-structure. 
We write $X(k)$ for the set of $k$-points of $X$ and $X(k')$ for the set of $k'$-points of $X$ if $k'/k$ is a field extension; we often write just $X$ for $X(\ovl{k})$.  By a subvariety of $X$ we mean a closed $\ovl{k}$-subvariety of $X$; a $k$-subvariety is a subvariety that is defined over $k$. 

Throughout, $G$ denotes a connected reductive linear algebraic $k$-group.  Algebraic groups and their subgroups are assumed to be smooth (although we consider certain non-smooth subgroup schemes in Section~\ref{subsec:smooth}).
A $k$-defined affine $G$-variety $X$ is an affine variety over $k$ equipped with a $k$-defined morphic action of $G$.
Two important examples of affine $G$-varieties in this paper are the group $G$ itself, with $G$ acting by inner automorphisms, and 
the Lie algebra $\frakg = \Lie(G)$ of $G$, with 
$G$ acting by the adjoint action (that $\frakg$ admits a $k$-structure is \cite[(12.2.3), (4.4.8)]{springer}).
Recall that these two actions are closely related: if $g\in G$, then we have $\Inn(g):G\to G, x\mapsto gxg^{-1}$, and then
$\Ad(g):= d(\Inn(g)):\frakg \to \frakg$ is the differential (see below).
To simplify notation we denote both of these actions by a dot: that is, if $g\in G$, then we let 
$g\cdot x = gxg^{-1}$ for each $x\in G$, and $g\cdot x = \Ad(g)(x)$ for each $x\in \frakg$.  We define $\ad$ to be the usual adjoint action of $\frakg$ on $\frakg$: so $\ad(x)(y)= [x, y]$ for $x, y\in \frakg$.  If necessary we write $\Ad_G$ instead of $\Ad$ and $\ad_\frakg$ instead of $\ad$.
Given any $m\in \NN$, we may extend these actions to actions of $G$ on $X = G^m$ or $\frakg^m$, the $m$-fold Cartesian product of $G$ or $\frakg$, by setting
$$
g\cdot (x_1,\ldots,x_m) := (g\cdot x_1,\ldots, g\cdot x_m)
$$
for each $g\in G$ and $(x_1,\ldots,x_m)\in X$.
We refer to this action informally in both cases as the action \emph{by simultaneous conjugation}.

When $\char(k) = p > 0$, the Lie algebra $\frakg$ is \emph{restricted} with a \emph{$p$-operation} $\frakg \to \frakg, x \mapsto x^{[p]}$, \cite[(4.4.3)]{springer}. 
By a subgroup of $G$ we mean a closed $\ovl{k}$-subgroup and by a $k$-subgroup we mean a subgroup that is defined over $k$. 
For a subgroup $H$ of $G$ we write $H^0$ for the identity component of $H$ and $\Lie(H)$ for its Lie algebra (which is a subalgebra of $\frakg$).  Subalgebras of $\frakg$ of the form $\Lie(H)$ as above are called \emph{algebraic}.

If $H$ is $k$-defined, then $\Lie(H)$ also admits a $k$-structure, \cite[(12.2.3), (4.4.8)]{springer}.  We write $\frakg(k')$ for the set of $k'$-points of $\frakg$ if $k'/k$ is a field extension, and just $\frakg$ for $\frakg(\ovl{k})$.  By a $k$-subalgebra of $\frakg$ we mean a Lie subalgebra $\frakh$ over $\ovl{k}$ which is defined over $k$.

If $f\colon G_1\to G_2$ is a homomorphism of algebraic groups then we denote the induced map from $\Lie(G_1)$ to $\Lie(G_2)$ by $df$.

\subsection{Cocharacters and parabolic subgroups} 
We recall some basic notation and facts 
concerning cocharacters and parabolic subgroups in connected reductive groups $G$ from \cite{GIT}, \cite{rich} and \cite{springer}.
Recall that a cocharacter of $G$ is a homomorphism of algebraic groups $\lambda:\Gm\to G$, where $\Gm$ is the multiplicative group.
We define $Y_k(G)$ to be the set of $k$-defined cocharacters of $G$ and $Y(G) := Y_{\ovl{k}}(G)$ to be the set of all cocharacters of $G$.
Given $\lambda\in Y(G)$, we define 
$$
P_\lambda= \{g\in G\,|\,\lim_{a\to 0} \lambda(a)g\lambda(a)^{-1}\ \mbox{exists}\}
$$
and $L_\lambda= C_G({\rm Im}(\lambda))\subseteq P_\lambda$ (for the definition of a limit, see \cite[\S 3.2.13]{springer}). 
We have $P_\lambda= L_\lambda= G$ if and only if $\Im(\lambda)$ is contained in the centre of $G$.
Any parabolic subgroup (resp., any Levi subgroup of a parabolic subgroup of $G$) is of the form $P_\lambda$ (resp., $L_\lambda$) \cite[Prop.~8.4.5]{springer}. 
In fact, this also holds for $k$-defined parabolic and Levi subgroups of $G$, by the following, which is \cite[Lem.~15.1.2(ii)]{springer}:

\begin{lem}\label{lem:P,L is Pl, Ll}
Every pair $(P,L)$ consisting of a parabolic $k$-subgroup $P$ of $G$ and a Levi $k$-subgroup $L$ of $P$ is of the form $(P,L)= (P_\lambda, L_\lambda)$ for some $\lambda\in Y_k(G)$, and vice versa.
\end{lem}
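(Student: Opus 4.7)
The plan is to handle the two implications separately, with the reverse direction being essentially bookkeeping and the forward direction requiring some structure theory of reductive $k$-groups.

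For the ``vice versa'' direction, start with $\lambda \in Y_k(G)$ and verify that $P_\lambda$ and $L_\lambda$ are $k$-defined, parabolic, and Levi. Over $\ovl{k}$ the parabolic/Levi assertion is classical (Springer Prop.~8.4.5). For $k$-rationality, the dynamic description $P_\lambda = \{g \in G : \lim_{a \to 0} \lambda(a) g \lambda(a)^{-1} \text{ exists}\}$ uses only the $k$-defined morphism $\lambda$ and the $k$-defined conjugation action, so $P_\lambda$ is a $k$-subvariety; and $L_\lambda = C_G(\Im(\lambda))$ is the centralizer of a $k$-defined torus, hence $k$-defined.

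For the forward direction, given a $k$-defined pair $(P,L)$, the first step is to produce a $k$-split torus $S \subseteq L$ whose centralizer in $G$ equals $L$. Here one invokes the Borel--Tits structure theory over arbitrary fields: every Levi $k$-subgroup of a parabolic $k$-subgroup contains a maximal $k$-split torus $T$ of $G$, and the connected component $S := (T \cap Z(L))^0$ is a $k$-split subtorus of $Z(L)^0$ with $C_G(S) = L$. This is the main input from the theory of reductive groups over a non-closed field, and is the step I expect to be the main obstacle; once it is in hand, the rest is linear algebra.

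Next, decompose $\frakg = \frakg_0 \oplus \bigoplus_{\alpha \neq 0} \frakg_\alpha$ into $S$-weight spaces, so that $\frakg_0 = \Lie(L)$ and the nonzero weights occurring in $\Lie(R_u(P))$ form a subset $\Phi^+ \subseteq X^*(S) \setminus \{0\}$. Choose $\lambda \in Y_k(S) \subseteq Y_k(G)$ with $\langle \alpha, \lambda \rangle > 0$ for every $\alpha \in \Phi^+$ and $\langle \alpha, \lambda \rangle \neq 0$ for every nonzero $S$-weight on $\frakg$. The set of such $\lambda$ is a nonempty open rational cone in $Y(S) \otimes \mathbb{Q} = Y_k(S) \otimes \mathbb{Q}$ (using that $S$ is $k$-split), so $k$-rational choices are abundant.

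Finally, verify $(P_\lambda, L_\lambda) = (P,L)$. Positivity on $\Phi^+$ gives $R_u(P) \subseteq P_\lambda$ and $L \subseteq P_\lambda$, whence $P \subseteq P_\lambda$; equality follows because both are parabolic subgroups with the same Lie algebra (read off from the weight space decomposition). For the Levi, the inclusion $L = C_G(S) \subseteq C_G(\Im(\lambda)) = L_\lambda$ is clear, and equality holds because the nonvanishing condition on $\lambda$ forces the $\lambda$-weight-zero subspace of $\frakg$ to coincide with $\frakg_0 = \Lie(L)$, so $\Lie(L_\lambda) = \Lie(L)$ and both are connected Levi subgroups.
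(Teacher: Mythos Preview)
Your proof is correct and follows the standard approach. However, the paper does not actually prove this lemma: it is stated as a direct citation of \cite[Lem.~15.1.2(ii)]{springer}, with no argument given. What you have written is essentially a sketch of the textbook proof found in Springer, so your approach and the cited reference are the same in substance; the paper simply defers to the literature rather than reproducing the argument.
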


For ease of reference, we record without proof some basic facts about these subgroups. The unipotent radical of a parabolic subgroup $P$ of $G$ is denoted by $R_u(P)$.
The following is \cite[2.3]{rich}.

\begin{lem}\label{lem:k-definedRuP}
If $P$ is a $k$-defined parabolic subgroup then $R_u(P)$ is $k$-defined.
\end{lem}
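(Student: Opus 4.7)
The plan is to use the cocharacter description of parabolic subgroups provided by Lemma~\ref{lem:P,L is Pl, Ll} to reduce the question to showing that the ``contracting'' subgroup attached to a $k$-defined cocharacter is $k$-defined. Concretely, since $P$ is a $k$-defined parabolic, I would invoke Lemma~\ref{lem:P,L is Pl, Ll} (the first half, which requires only the parabolic without a Levi, but which follows from the stated result by picking any $k$-defined Levi of $P$) to write $P = P_\lambda$ for some $\lambda \in Y_k(G)$. The goal then reduces to showing that
\[
U_\lambda := \{g \in G \,|\, \lim_{a \to 0} \lambda(a) g \lambda(a)^{-1} = e\}
\]
coincides with $R_u(P_\lambda)$ and is $k$-defined.

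The identification $R_u(P_\lambda) = U_\lambda$ is a standard fact about $\Gm$-actions on reductive groups (see \cite[Prop.~8.4.5]{springer}), valid over $\ovl{k}$. The key issue is therefore only the $k$-rationality. For this, I would describe $U_\lambda$ as follows. The $k$-defined map $\phi: \Gm \times G \to G$, $(a,g) \mapsto \lambda(a) g \lambda(a)^{-1}$, is a $k$-morphism because $\lambda \in Y_k(G)$. Its restriction to $\Gm \times U_\lambda$ extends uniquely to a $k$-morphism $\mathbb{A}^1 \times U_\lambda \to G$ whose fiber over $0$ is the constant map $e$, and this extension property can be cut out by the vanishing of $k$-rational regular functions on $G$. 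Hence $U_\lambda$ is a $k$-subscheme of $G$; since it is a smooth connected unipotent subgroup, it is $k$-defined in the sense of this paper.

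An alternative, perhaps cleaner, route would be via Galois descent: $R_u(P)$ is the unique maximal connected normal unipotent subgroup of $P$, so it is characteristic in $P$. Any $\sigma$ in the absolute Galois group of $k$ fixes $P$ setwise (as $P$ is $k$-defined), hence sends $R_u(P)$ to $R_u(\sigma(P)) = R_u(P)$, so $R_u(P)$ is Galois-stable. Combined with the well-known fact that $R_u(P)$ is $k_s$-defined (available from Borel--Tits over the separable closure), Galois descent yields $k$-definability.

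The main obstacle is the rationality check; the group-theoretic identification $R_u(P_\lambda) = U_\lambda$ is classical, and the connected-normal-unipotent characterization is automatic. In this excerpt the authors simply cite Richardson \cite[2.3]{rich}, so presumably the intended proof is one of the two sketched above, and no novel technical step is required beyond unpacking the $\Gm$-action or invoking the characteristic-subgroup plus descent argument.
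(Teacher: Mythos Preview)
The paper gives no proof of this lemma at all; it merely records the statement and attributes it to \cite[2.3]{rich}. You correctly note this at the end of your proposal, so there is nothing to compare against.

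Both of your sketched arguments are standard and correct. The cocharacter route is essentially the argument in \cite[13.4.2]{springer}: once $\lambda\in Y_k(G)$, the subgroups $P_\lambda$, $L_\lambda$ and $U_\lambda = R_u(P_\lambda)$ are all $k$-defined, and this is likely close to what Richardson does given the cocharacter framework of \cite{rich}. One small point: to obtain $\lambda\in Y_k(G)$ from Lemma~\ref{lem:P,L is Pl, Ll} you need a Levi $k$-subgroup of $P$, which in turn requires a maximal $k$-torus of $P$; both exist, but in the paper's ordering these facts appear as Lemma~\ref{lem:parprops}(ii) (together with Grothendieck's theorem on maximal $k$-tori), so you should be aware you are appealing to results stated after the lemma you are proving. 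Since all of these are cited facts rather than results developed in the paper, there is no genuine circularity. Your Galois-descent alternative is also fine and avoids this ordering issue.
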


The following is \cite[Lem.~ 2.5 (i)+(iii)]{GIT}. See also \cite[16.1.1]{springer}.

\begin{lem}
\label{lem:parprops}
Let $P$ be a parabolic subgroup of $G$ and $L$ a Levi subgroup of $P$.
\begin{itemize}
\item[(i)] We have $P\iso L\ltimes R_u(P)$, and this is a $k$-isomorphism if $P$ and $L$ are $k$-defined. 
\item[(ii)] Let $T$ be a maximal torus of $P$. Then there is a unique Levi subgroup $L$ of $P$ such that $T\subseteq L$. If $P$ and $T$ are $k$-defined then $L$ is $k$-defined.
\item[(iii)] Any two Levi $k$-subgroups of a parabolic $k$-subgroup $P$ are $R_u(P)(k)$-conjugate. 
\end{itemize}
\end{lem}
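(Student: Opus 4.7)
The plan is to fix $\lambda \in Y_k(G)$ with $(P,L) = (P_\lambda, L_\lambda)$ via Lemma \ref{lem:P,L is Pl, Ll}, and to exploit the dynamic description of these subgroups through the limit morphism
$$\pi_\lambda\colon P_\lambda \longrightarrow L_\lambda,\quad g \mapsto \lim_{a\to 0}\lambda(a)g\lambda(a)^{-1}.$$
Standard results on $\Gm$-actions show that $\pi_\lambda$ is a $k$-defined homomorphism of algebraic groups which restricts to the identity on $L_\lambda$ and has kernel exactly $R_u(P_\lambda)$ (a kind of Bialynicki-Birula retraction). This yields a $k$-rational splitting of the sequence $1 \to R_u(P) \to P \to L \to 1$, giving (i). The point is that the splitting and the identification of $R_u(P)$ with $\ker(\pi_\lambda)$ come from a single $k$-defined morphism, so no Galois descent is required.

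For (ii), I would prove existence by choosing any $\lambda$ with $P = P_\lambda$ and noting that $L_\lambda$ contains a maximal torus $T_0$ of $P$; since maximal tori of $P$ are $R_u(P)$-conjugate, some $u \in R_u(P)$ carries $T_0$ to $T$, and $uL_\lambda u^{-1}$ is the required Levi. For uniqueness I would characterise $L$ intrinsically: $L$ is the subgroup generated by $T$ together with the root subgroups $U_\alpha$ (with respect to $T$) for which both $U_\alpha$ and $U_{-\alpha}$ lie in $P$. This description is manifestly independent of the choice of Levi, so $L$ is unique; moreover, when $T$ and $P$ are $k$-defined the generating set is Galois-stable, so $L$ is $k$-defined.

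For (iii), given Levi $k$-subgroups $L_1,L_2$ of $P$, the transporter $X := \{u \in R_u(P) \mid uL_1u^{-1} = L_2\}$ is a non-empty $k$-subvariety of $R_u(P)$ by the $\ovl{k}$-case of (i), and it is a torsor under $N_{R_u(P)}(L_1)$. Using the semidirect product structure from (i), any $u \in N_{R_u(P)}(L_1)$ must in fact centralize $L_1$: writing $u\ell u^{-1} = (u\ell u^{-1}\ell^{-1})\ell$ with the commutator in $R_u(P)$ and invoking uniqueness of the $L_1 \ltimes R_u(P)$ decomposition forces the commutator to vanish. Since $L_1$ contains a maximal torus $T$ of $G$ with $C_G(T) = T$, one deduces $N_{R_u(P)}(L_1) \subseteq T \cap R_u(P) = \{1\}$. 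Hence $X$ consists of a single $\ovl{k}$-point, which is automatically Galois-fixed, placing the unique conjugator in $R_u(P)(k)$. I expect the torsor-rigidity step, i.e.\ establishing $N_{R_u(P)}(L_1) = 1$, to be the main technical obstacle, as it is precisely where one leverages the semidirect product from (i) and self-centralisation of maximal tori in $G$ to avoid any appeal to the vanishing of a Galois cohomology group.
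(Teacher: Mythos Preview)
The paper does not prove this lemma; it simply records it as \cite[Lem.~2.5(i)+(iii)]{GIT} (see also \cite[16.1.1]{springer}). Your proposal therefore supplies a proof where the paper gives none, and the overall strategy---the dynamic description via $\pi_\lambda$ for (i), the root-subgroup characterisation for (ii), and the single-point transporter for (iii)---is the right one. Two corrections are needed.

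In (ii), the assertion that maximal tori of $P$ are $R_u(P)$-conjugate is false once $L_\lambda$ is strictly larger than a torus: the variety of maximal tori of $P$ has dimension $\dim P - \dim T$, while each $R_u(P)$-orbit has dimension at most $\dim R_u(P) = \dim P - \dim L_\lambda$. The repair is immediate: maximal tori of $P$ are $P$-conjugate, so $T = pT_0p^{-1}$ for some $p \in P$; writing $p = u\ell$ with $u \in R_u(P)$ and $\ell \in L_\lambda$ gives $T \subseteq u L_\lambda u^{-1}$, which is the required Levi.

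In (iii), the step ``the single $\ovl{k}$-point of $X$ is automatically Galois-fixed, hence lies in $R_u(P)(k)$'' is incomplete over imperfect $k$: invariance under $\mathrm{Aut}(\ovl{k}/k)$ only places the point in $R_u(P)(k^{1/p^\infty})$. What you actually need is that the \emph{scheme-theoretic} normaliser $\calN := \calN_{R_u(P)}(L_1)$ is smooth (hence trivial), so that the transporter is an $\calN$-torsor which is \'etale over $k$ with one geometric point, forcing $X \cong \mathrm{Spec}(k)$. This smoothness does hold: one computes that $\Lie(\calN)$ consists of those $x \in \Lie(R_u(P))$ with $\Ad(\ell)(x) - x \in \Lie(L_1)$ for all $\ell \in L_1$; but $\Ad(\ell)(x) - x$ also lies in $\Lie(R_u(P))$, hence vanishes, so $x$ is fixed by $L_1$ and in particular by a maximal torus of $G$, whence $x = 0$ since $\Lie(R_u(P))$ is a sum of nonzero root spaces. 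This is precisely the infinitesimal analogue of your computation that $N_{R_u(P)}(L_1)(\ovl{k}) = 1$, so the gap is an omission rather than a wrong turn---but without it the argument does not go through over non-perfect fields, and the standard references close it by using that $R_u(P)$ is $k$-split.
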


We denote the canonical projection from $P$ to $L$ by $c_L$; this is $k$-defined if $P$ and $L$ are. 
Given any cocharacter $\lambda$ of $G$ such that $P= P_\lambda$ and $L= L_\lambda$ then we often write $c_\lambda$ instead of $c_L$. We have $c_\lambda(g)= \lim_{a\to 0} \lambda(a)g\lambda(a)^{-1}$ for $g\in P_\lambda$; the kernel of $c_\lambda$ is the unipotent radical $R_u(P_\lambda)$ and the set of fixed points of $c_\lambda$ is $L_\lambda$.

Let $M$ be a connected reductive subgroup of $G$ and let $\lambda\in Y(M)$.  We can perform the above constructions in $M$ and contain subgroups $P_\lambda(M)$ and $L_\lambda(M)$.  As above, we write $P_\lambda$ for $P_\lambda(G)$ and $L_\lambda$ for $L_\lambda(G)$.  It follows easily from the definitions and results above that $P_\lambda(M)= P_\lambda\cap M$, $L_\lambda(M)= L_\lambda\cap M$ and $R_u(P_\lambda(M))= R_u(P_\lambda)\cap M$.

All of this transfers over to the Lie algebra $\frakg$ through the adjoint action: since
$G$ acts on $\frakg$ by the adjoint action, so does any subgroup of $G$ and, in particular, the image of any cocharacter of $G$.
For $\lambda\in Y(G)$, we define and use throughout $\frakp_\lambda:=\Lie(P_\lambda)$ and
$\frakl_\lambda:=\Lie(L_\lambda)$.
We require some standard facts concerning
Lie algebras of parabolic and Levi subgroups of $G$
(cf.~\cite[\S 2.1]{rich}).

\begin{lem}
	\label{lem:liealgebrasofRpars}
	Let $x\in\frakg$. Then with the notation from above we have
	\begin{enumerate}[{\rm(i)}]
		\item $x\in\frakp_\lambda$ if and only if $\underset{a\to 0}{\lim}\,
		\lambda(a)\cdot x$ exists;
		\item $x\in\frakl_\lambda$ if and only if $\underset{a\to 0}{\lim}\,
		\lambda(a)\cdot x = x$ if and only if the image of $\lambda$ centralizes $x$;
		\item $x\in\Lie(R_u(P_\lambda))$ if and only if $\underset{a\to 0}{\lim}\,
		\lambda(a)\cdot x$ exists and equals $0$.
	\end{enumerate}
\end{lem}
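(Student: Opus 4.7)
The plan is to reduce all three statements to the weight-space decomposition of $\frakg$ under the $\Gm$-action induced by $\lambda$, and then match the weight spaces with the Lie algebras in question using the standard root-group description of $P_\lambda$, $L_\lambda$, and $R_u(P_\lambda)$.

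First, since $\lambda(\Gm)$ is a torus acting rationally on $\frakg$ via $\Ad$, we decompose
$$\frakg = \bigoplus_{n\in\mathbb{Z}} \frakg_n, \qquad \frakg_n = \{x\in\frakg : \lambda(a)\cdot x = a^n x \text{ for all } a\in\Gm\}.$$
For $x = \sum_n x_n$ with $x_n\in\frakg_n$, we have $\lambda(a)\cdot x = \sum_n a^n x_n$, so by inspection: the limit $\lim_{a\to 0}\lambda(a)\cdot x$ exists if and only if $x_n = 0$ for all $n<0$; it equals $x$ if and only if $x_n = 0$ for all $n\neq 0$ (equivalently, $\lambda(\Gm)$ centralizes $x$ under $\Ad$); and it equals $0$ if and only if $x_n = 0$ for all $n\leq 0$. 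This disposes of the limit/centralizer equivalences in (i), (ii), (iii) once we identify the weight subspaces correctly.

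The key step is therefore to verify
$$\frakp_\lambda = \bigoplus_{n\geq 0}\frakg_n, \qquad \frakl_\lambda = \frakg_0, \qquad \Lie(R_u(P_\lambda)) = \bigoplus_{n>0}\frakg_n.$$
For this I would pick a maximal torus $T$ of $G$ with $\lambda(\Gm)\subseteq T$ (possible since $\lambda(\Gm)$ is a torus), and let $\Phi$ denote the root system of $G$ with respect to $T$. By \cite[Prop.~8.4.5]{springer} the group $P_\lambda$ is generated by $T$ together with the root groups $U_\alpha$ with $\langle \alpha,\lambda\rangle \geq 0$, the Levi $L_\lambda$ by $T$ and the $U_\alpha$ with $\langle \alpha,\lambda\rangle = 0$, and $R_u(P_\lambda)$ by the $U_\alpha$ with $\langle \alpha,\lambda\rangle > 0$. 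Taking Lie algebras yields
$$\frakp_\lambda = \Lie(T)\oplus\bigoplus_{\langle\alpha,\lambda\rangle\geq 0}\frakg_\alpha,$$
and similarly for $\frakl_\lambda$ and $\Lie(R_u(P_\lambda))$. Since $\Lie(T)\subseteq \frakg_0$ and $\frakg_\alpha\subseteq \frakg_{\langle\alpha,\lambda\rangle}$, the desired matching with the weight spaces $\frakg_n$ follows.

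Combining the two steps gives (i), (ii), (iii). The likely main obstacle is the clean identification of the Lie algebra of $P_\lambda$ (and of $R_u(P_\lambda)$) with the sum of nonnegative (resp.\ strictly positive) weight spaces; this is standard but needs the root-group structure and the smoothness of the groups involved so that dimensions agree and the Lie algebra is the direct sum of the root spaces. Everything takes place over $\ovl k$, so field-of-definition subtleties do not intervene here.
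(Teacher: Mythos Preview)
Your argument is correct and is exactly the standard weight-space proof of this fact. The paper itself does not prove the lemma: it is stated as a standard result with a reference to \cite[\S 2.1]{rich}, so there is no ``paper's own proof'' to compare against beyond that citation. Your proposal simply spells out what that reference contains.
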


The map $c_\lambda:=c_{\frakl_\lambda} : \frakp_\lambda \to \frakl_\lambda$ given
by $x \mapsto \underset{a\to 0}{\lim}\, \lambda(a)\cdot x$, where the action is given by adjoint action, coincides with
the usual projection of $\frakp_\lambda$ onto $\frakl_\lambda$.
 The kernel of $c_\lambda$ is $\Lie(R_u(P_\lambda))$, and the set of fixed points of $c_\lambda$ is $\frakl_\lambda$. 

Let $m\in\NN$ and recall that $G$ acts on $\frakg^m$ by simultaneous conjugation. 
Given $\lambda\in Y(G)$, we have a map $\frakp_\lambda^m\to \frakl_\lambda^m$ given by ${\mathbf x}\mapsto \lim_{a\to 0} \lambda(a)\cdot {\mathbf x}$; we abuse notation slightly and also call this map $c_\lambda$. For any ${\mathbf x}\in \frakp_\lambda^m$, there exists a Levi $k$-subgroup $L$ of $P_\lambda$ with ${\mathbf x}\in \frakl^{m}$ if and only if $c_\lambda({\mathbf x})= u\cdot {\mathbf x}$ for some $u\in R_u(P_\lambda)(k)$, by \cite[Prop.~2.11]{cochar}.

\subsection{Smoothness of centralisers and normalisers}
\label{subsec:smooth}

We are concerned with linear algebraic groups $G$ over $k$: these are  affine group schemes of finite type over $k$ (assumed to be smooth).  Note that smoothness is automatic if $\char(k)= 0$, by a result of Cartier \cite[II, \S 6, 1.1]{DG70}.  We do need to consider certain non-smooth subgroup schemes of $G$, as the lack of smoothness has implications for $G$-complete reducibility.  If $\frakh$ is a subalgebra of $\frakg$ then we define $\calN_G(\frakh)$ to be the \emph{scheme-theoretic} normaliser of $\frakh$ in $G$, and $\calC_G(\frakh)$ to be the \emph{scheme-theoretic} centraliser of $\frakh$ in $G$ (see \cite[I.2.12]{Jantzen2} or \cite[A.1.9]{CGP}).  We define $N_G(\frakh)$ and $C_G(\frakh)$ to be the setwise normaliser and centraliser of $\frakh$, respectively.  Note that $\calN_G(\frakh)$ and $\calC_G(\frakh)$ are $k$-defined, but $N_G(\frakh)$ and $C_G(\frakh)$ need not be $k$-defined.  The group scheme $\calN_G(\frakh)$ (resp., $\calC_G(\frakh)$) is smooth if and only if $\calN_G(\frakh)= N_G(\frakh)$ (resp., $\calC_G(\frakh)= C_G(\frakh)$), and in this case $N_G(\frakh)$ (resp., $C_G(\frakh)$) is $k$-defined.

There is a criterion for smoothness in terms of the Lie algebra.  Define
$$ \frakn_\frakg(\frakh) := \{x\in \frakg \mid [x, \frakh]\subseteq \frakh\} $$
and
$$ \frakc_\frakg(\frakh) := \{x\in \frakg \mid [x, y]= 0\ \mbox{for all $y\in \frakh$}\}. $$
It can be shown that $\calN_G(\frakh)$ is smooth if and only if $\frakn_\frakg(\frakh) = \Lie(N_G(\frakh))$ and $\calC_G(\frakh)$ is smooth if and only if $\frakc_\frakg(\frakh) = \Lie(C_G(\frakh))$: e.g.,~see \cite[Prop.~II.6.7]{borel}.  For further discussion, see \cite[\S 3]{herpel} and \cite{HeSt}.
For exact conditions for $\calC_G(\frakh)$ to be smooth, see 
\cite[Thm.~1.2]{BMRT} and \cite{herpel};
for explicit lower bounds on $\char(k)$ to ensure that 
$\calN_G(\frakh)$ is smooth, see \cite[Thm.~A]{HeSt}.

If $H$ is a $k$-subgroup of $G$ then we define $\calN_G(H)$, $\calC_G(H)$, $N_G(H)$, $C_G(H)$, $\frakn_\frakg(H)$ and $\frakc_\frakg(H)$ in the obvious way, and analogous results to those above hold: so, for instance, $\frakc_\frakg(H)= \{x\in \frakg\,|\,h\cdot x= x\ \mbox{for all $h\in H$}\}$, and $\calC_G(H)$ is smooth if and only if $\calC_G(H)= C_G(H)$ if and only if $\frakc_\frakg(H) = \Lie(C_G(H))$.  We recall a useful fact \cite[III.9.2 Cor.]{borel}: if $S$ is a torus in a linear algebraic group $M$ then $\calC_M(S)$ is smooth.

\begin{defn}
 \begin{itemize}
  \item[(a)] Let $\Gamma$ be a subgroup of $G(k)$ consisting of unipotent elements.  We call $\Gamma$ \emph{plongeable} if there is a parabolic subgroup $P$ of $G$ such that $\Gamma\subseteq R_u(P)(k)$ (see \cite[Sec.\ 1]{tits3}).
  \item[(b)] Let $\frakh$ be a $k$-subalgebra of $\frakg$ consisting of nilpotent elements.  We call $\frakh$ \emph{plongeable} if there is a parabolic subgroup $P$ of $G$ such that $\frakh\subseteq \Lie(R_u(P))$.
 \end{itemize}
\end{defn}

\begin{rem}
\label{rem:plong_subgp}
 Let $M$ be a connected reductive $k$-subgroup of $G$ and let $\Gamma$ be a subgroup of $M(k)$ consisting of unipotent elements.  If $\Gamma$ is plongeable in $M$ then there exists $\lambda\in Y_k(M)$ such that $\Gamma\subseteq R_u(P_\lambda(M))(k)$.  Then $\Gamma\subseteq R_u(P_\lambda(G))(k)$, so $\Gamma$ is plongeable in $G$.  Likewise, if $\frakh$ is a $k$-subalgebra of $\Lie(M)$ consisting of nilpotent elements and $\frakh$ is plongeable in $M$ then $\frakh$ is plongeable in $G$.
\end{rem}

\begin{defn}
	\label{def:fabulous}
 Define $\char(k) = p > 0$ to be \emph{fabulous} for $G$  if the following hold:
 
 (a) Centralisers of $k$-subgroups of $G$ and $k$-subalgebras of $\frakg$ are smooth.
 
 (b) Normalisers of $k$-subalgebras of $\frakg$ are smooth.
 
 (c) If $\frakh$ is a subalgebra of $\frakg$ consisting of nilpotent elements then $\frakh\subseteq \Lie(R_u(P))$ for some $k$-parabolic subgroup $P$ of $G$.
\end{defn}

\begin{rems}
\label{rems:fabulous}
         (i) Suppose $k= \ovl{k}$ and let $\frakh$ be a subalgebra of $\frakg$.  Then $\frakh$ is plongeable if and only if $\frakh\subseteq \Lie(B)$ for some Borel subgroup $B$ of $G$, and this is the case if and only if $\frakh\subseteq \Lie(U)$ for some maximal unipotent subgroup $U$ of $G$.
        
	(ii) If $\char(k) = 0$, then all three conditions in Definition~\ref{def:fabulous} are satisfied, see \cite[II, §6, 1.1]{DG70}, \cite[Ch.~VIII, \S 10, Cor.~2]{Bou05}.
		
	(iii) Suppose $k= \ovl{k}$.  If $\frakh$ consists of nilpotent elements and $\dim(\frakh)= 1$ then $\frakh\subseteq \Lie(B)$ for some Borel subgroup $B$ of $G$: this follows from \cite[IV.14.25 Prop.]{borel}.  This can fail if $k\neq \ovl{k}$.  For example, let $G= \PGL_2(k)$.  For $A\in \GL_2(k)$, we denote the image of $A$ in $G$ by $\ovl{A}$, and likewise for elements of $\frakgl_2(k)$.  If $\char(k)= 2$ and we choose $a\in k^{\frac{1}{2}}$ such that $a\not\in k$ then the subspace of $\Lie(\PGL_2)$ spanned by $\ovl{\twobytwo{0}{1}{a^2}{0}}$ consists of nilpotent elements but is not contained in $\Lie(B)$ for any Borel subgroup $B$ of $\PGL_2$ (compare \cite[Rem.\ 5.10]{GIT}).  This is the Lie algebra  counterpart of the phenomenon of ``non-plongeabilit\'e'' for unipotent elements of the group, cf.~\cite{tits2}.
	
	(iv) If condition (a) of Definition~\ref{def:fabulous} holds then $Z(G)= \calC_G(G)$ is smooth, so $\frakz(\frakg)= \Lie(Z(G))$; in particular, $\frakz(\frakg)= 0$ in this case if $G$ is semisimple.
	
	(v) It is immediate that if condition (a) of Definition~\ref{def:fabulous} holds for $G$ regarded as a $\ovl{k}$-group then it also holds for $G$ regarded as a $k$-group.
\end{rems}

\begin{ex}
\label{ex:PGL_2}
 Suppose $k$ is algebraically closed of characteristic $p= 2$  and let $G= \PGL_2(k)$.  Let $\frakh$ be the abelian subalgebra of $\frakg$ spanned by $x_1:= \ovl{\twobytwo{0}{1}{0}{0}}$ and $x_2:= \ovl{\twobytwo{0}{0}{1}{0}}$.  It is easy to check that $\frakh$ is not contained in $\Lie(B)$ for any Borel subgroup $B$ of $G$.  Similar examples are produced in \cite{lmt} for any $G$ such that $p$ is a torsion prime for $G$.
 
 Now let $\frakm$ be the subalgebra of $\frakg$ spanned by $x_1$.   Then $C_G(\frakm)(k)$ is (the image of) the group of upper unitriangular matrices and $N_G(\frakm)(k)$ is (the image of) the group of upper triangular matrices, but $\frakc_\frakg(\frakm)= \frakh$ and $\frakn_\frakg(\frakm)= \frakg$.  Hence neither $C_G(\frakm)$ nor $N_G(\frakm)$ is smooth.  A similar calculation shows that neither $C_G(\frakh)$ nor $N_G(\frakh)$ is smooth.
\end{ex}

Suppose $k= \ovl{k}$.  Owing to \cite[Thm.~1.1]{herpel}, centralisers of subgroups of $G$ and centralisers of subalgebras of $\frakg$ are smooth if and only if $\char(k)$ is $0$ or a ``pretty good prime'' $p$ for $G$,  (\cite[Def.~2.11]{herpel}). The latter implies that $p$ is not a torsion prime for the root system of $G$.  It then follows from \cite[Thm.\ 2.2 and Rems.(a)]{lmt} that condition (c) of Definition~\ref{def:fabulous} holds.  Hence condition (a) implies condition (c) in this instance (see also \cite[Thm.~9.2]{HeSt} and \cite[Cor.\ 1.3]{premetstewart}).  We now extend this result to arbitrary fields (Proposition~\ref{prop:fab_crit}), using results from \cite{tits3} on plongeable unipotent subgroups.  First we show that condition (a) is inherited by Levi subgroups and certain quotients of $G$.

\begin{lem}
\label{lem:pretty_good_descent}
 Let $G$ be connected.  Suppose the centraliser of every $k$-subgroup of $G$ and every $k$-subalgebra of $\frakg$ is smooth.
 \begin{itemize}
  \item[(a)] Let $S$ be a $k$-torus of $G$ and let $L= C_G(S)$.  Then the centraliser in $L$ of every $k$-subgroup of $L$ and every $k$-subalgebra of $\Lie(L)$ is smooth.
  \item[(b)] Let $S$ be a $k$-subgroup of $Z(G)^0$.  Then the centraliser in $L$ of every $k$-subgroup of $G/S$ and every $k$-subalgebra of $\Lie(G/S)$ is smooth.
 \end{itemize}
\end{lem}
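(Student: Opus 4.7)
The plan is to prove each part by embedding the centralizers of interest into centralizers that we already know to be smooth by hypothesis, then invoking the fact quoted in the excerpt (\cite[III.9.2 Cor.]{borel}) that the centralizer of a torus in a linear algebraic group is smooth.

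For part (a), the key observation is that $S$ commutes with every $k$-subgroup $H$ of $L = C_G(S)$ (by the defining property of $L$), and that $\Ad(S)$ acts trivially on $\Lie(L)$---hence on every $k$-subalgebra $\frakh \subseteq \Lie(L)$---because $S$ is central in $L$. Thus $S \subseteq C_G(H)$ and $S \subseteq C_G(\frakh)$. By hypothesis, $\calC_G(H)$ and $\calC_G(\frakh)$ are smooth, so they coincide with $C_G(H)$ and $C_G(\frakh)$ respectively; taking $M = C_G(H)$ (resp.~$M = C_G(\frakh)$) and applying the torus-centralizer result within $M$, the resulting smooth scheme $\calC_M(S)$ should agree scheme-theoretically with $\calC_G(-) \cap \calC_G(S) = \calC_L(-)$. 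The only thing to verify is this last identification, which is a routine functor-of-points check.

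For part (b), let $\pi \colon G \to G/S$ be the quotient, and given a $k$-subgroup $H \subseteq G/S$ (resp., $k$-subalgebra $\frakh \subseteq \Lie(G/S)$), set $\tilde H := \pi^{-1}(H)$ (resp., $\tilde \frakh := (d\pi)^{-1}(\frakh)$), a $k$-subgroup of $G$ containing $S$ (resp., $k$-subalgebra of $\frakg$ containing $\Lie(S)$). By hypothesis $\calC_G(\tilde H)$ and $\calC_G(\tilde \frakh)$ are smooth, and they contain $S$ since $S \subseteq Z(G)$. Hence the quotients $\calC_G(\tilde H)/S$ and $\calC_G(\tilde \frakh)/S$ are smooth $k$-subgroups of $G/S$, both visibly mapping into $\calC_{G/S}(H)$ and $\calC_{G/S}(\frakh)$ respectively. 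Using that $\Ad(S)$ acts trivially on $\frakg$, hence on $\frakg/\Lie(S) = \Lie(G/S)$ (by centrality of $S$), so that the $\tilde H$-action on these spaces factors through $H$, one verifies the Lie algebra smoothness criterion of Section~\ref{subsec:smooth} to conclude that $\calC_{G/S}(H)$ and $\calC_{G/S}(\frakh)$ are smooth and in fact equal to the named quotients.

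The main obstacle is the identification in (b): at the level of $\ovl{k}$-points the preimage of $C_{G/S}(H)$ in $G$ is the \emph{a priori} larger set $\{g \in G : [g, \tilde H] \subseteq S\}$, and controlling the discrepancy requires analysing the group homomorphism $\tilde h \mapsto [g, \tilde h]$ (well-defined because $S$ is central). If this direct approach proves unwieldy, a fallback is to invoke the characterisation, quoted in the excerpt, of condition (a) in terms of ``pretty good'' primes: this is a condition depending only on the root datum, which is transparently inherited both by Levi subgroups (a sub-root datum) and by quotients by central $k$-subgroups (which leave the root system unchanged), and this reduces (a) and (b) to a root-datum check.
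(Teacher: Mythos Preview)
Your approach to part~(a) is essentially the paper's: set $M=C_G(H)$ (smooth by hypothesis), observe $S\subseteq M$, and then $C_L(H)=C_M(S)$ is smooth because centralisers of tori are smooth; the Lie-algebra identity $\frakc_\frakl(H)=\frakc_\frakm(S)$ is immediate.

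For part~(b) you have the right set-up and you correctly isolate the obstruction, but you have not found the idea that removes it, and your commutator map points in the wrong direction.  The paper argues as follows.  Write $G_1=G/S$, $H=\pi^{-1}(H_1)$, $M_1=C_{G_1}(H_1)^0$ and $M=\pi^{-1}(M_1)^0$.  One shows $M=C_G(H)^0$.  The inclusion $C_G(H)^0\subseteq M$ is clear.  For the other, fix $h\in H$ and consider the morphism $M\to G$, $m\mapsto[m,h]$; its image lies in $S$ (by choice of $M$) and in $[G,G]$, hence in $[G,G]\cap S$, which is \emph{finite} because $[G,G]$ is semisimple and $S\subseteq Z(G)^0$.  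Since $M$ is \emph{connected}, the image is trivial, so $M\subseteq C_G(H)$.  This yields $\dim(C_{G_1}(H_1))+\dim(S)=\dim(C_G(H))$.  The Lie-algebra side uses the analogue $\Lie([G,G])\cap\frakz(\frakg)=0$, which needs the smoothness of $Z(G)$---a consequence of the hypothesis (Remarks~\ref{rems:fabulous}(iv)).  Your map $\tilde h\mapsto[g,\tilde h]$ fixes the wrong variable: $\tilde H$ need not be connected, so its image in the finite group $[G,G]\cap S$ need not vanish, and you only conclude $g\in C_G(\tilde H^0)$, which is too weak to match dimensions with $C_G(\tilde H)$.  Swapping the roles---fixing $h$ and varying $m$ in a connected piece of the preimage---is exactly the missing manoeuvre.

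Your fallback is also unreliable.  The pretty-good-prime characterisation quoted in the paper is stated over $\ovl{k}$, whereas the lemma's hypothesis concerns only $k$-subgroups and $k$-subalgebras, so you cannot directly conclude that $p$ is pretty good.  Moreover, ``pretty good'' depends on the full root datum (the lattices, not just the root system), so it is not ``transparently inherited'' by $G/S$: quotienting by a central subgroup alters the character and cocharacter lattices.
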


\begin{proof}
 (a) Let $H$ be a $k$-subgroup of $L$.  Set $M= C_G(H)$ and $\frakm= \Lie(M)$.  Then $S\subseteq M$ and $\frakm= \frakc_\frakg(H)$ by assumption.  We have $C_L(H)= C_M(S)$ and $\frakc_\frakl(H)= \frakc_\frakm(S)$.  But centralisers of tori are smooth, so
 $$ \Lie(C_L(H))= \Lie(C_M(S))= \frakc_\frakm(S)= \frakc_\frakl(H). $$
 The proof for centralisers of $k$-subalgebras is similar.
 
 (b) Let $G_1= G/S$ and let $\pi\colon G\to G_1$ denote the canonical projection.  Let $H_1$ be a $k$-subgroup of $G_1$ and let $H= \pi^{-1}(H_1)$; note that $H$ is $k$-defined as $\pi$ is a smooth map.  Let $M_1= C_{G_1}(H_1)^0$ and let $M= \pi^{-1}(M_1)^0$.  We claim that $M= C_G(H)^0$.  Clearly $M\supseteq C_G(H)^0$.  For the reverse inclusion, if $h\in H$ then the map $M\to [G,G]$, $m\mapsto [m,h]$ has image in $[G,G]\cap S$.  But $M$ is connected and $[G,G]\cap S$ is finite, so $[m,h]= 1$ for all $m\in M$.  Hence $M\subseteq C_G(H)^0$, as required.  We deduce that $\dim(M)= \dim(M_1)+ \dim(S)$.  By a similar argument, $\frakc_\frakg(H)= d\pi^{-1}(\frakc_{\frakg_1}(H_1))$ and $\dim(\frakc_\frakg(H))= \dim(\frakc_{\frakg_1}(H_1))+ \dim(\fraks)$; note that $\Lie([G, G])\cap \frakz(\frakg)= 0$ by Remarks~\ref{rems:fabulous}(iv).  Now $\dim(M)= \dim(\frakc_\frakg(H))$ since $C_G(H)$ is smooth, so $\dim(M_1)= \dim(\frakc_{\frakg_1}(H_1))$.  It follows that $C_{G_1}(H_1)$ is smooth.
 
 A similar argument shows that if $\frakh_1$ is a $k$-subalgebra of $\frakg_1$ then $C_{G_1}(\frakh_1)$ is smooth.
\end{proof}

\begin{rem}
\label{rem:levi_plong}
 Let $P$ be a $k$-parabolic subgroup of $G$ and let $L$ a Levi $k$-subgroup of $P$.  Let $\Gamma$ be a unipotent subgroup of $P(k)$ and suppose there is a parabolic $k$-subgroup $Q$ of $L$ such that $c_L(\Gamma)\subseteq R_u(Q)(k)$.  Now $QR_u(P)$ is a parabolic subgroup of $G$, and $\Gamma\subseteq R_u(Q)(k)R_u(P)(k)= (R_u(QR_u(P)))(k)$.  Hence $\Gamma$ is plongeable in $G$.  By a similar argument, if $\frakh$ is a $k$-subalgebra of $\Lie(P)$ consisting of nilpotent elements and $c_{\Lie(L)}(\frakh)\subseteq \Lie(R_u(Q))$ then $\frakh$ is plongeable in $G$.
\end{rem}

\begin{lem}
\label{lem:smooth_plong}
 Suppose the centraliser of every $k$-subgroup of $G$ and every $k$-subalgebra of $\frakg$ is smooth.  Then every unipotent subgroup of $G(k)$ is plongeable.
\end{lem}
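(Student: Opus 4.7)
Let $\Gamma \le G(k)$ be a unipotent subgroup, and denote by $\ovl{\Gamma}$ its Zariski closure in $G$; since $\Gamma \subseteq G(k)$, $\ovl{\Gamma}$ is a (smooth) $k$-subgroup of $G$, and it is unipotent, so $\Lie(\ovl{\Gamma})$ is a nilpotent $k$-subalgebra of $\frakg$. We seek a parabolic $k$-subgroup $P$ with $\Gamma \subseteq R_u(P)(k)$; it suffices to find one with $\ovl{\Gamma} \subseteq R_u(P)$.

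If $\char(k) = 0$, condition~(c) of Definition~\ref{def:fabulous} holds by Remarks~\ref{rems:fabulous}(ii), so $\Lie(\ovl{\Gamma}) \subseteq \Lie(R_u(P))$ for some parabolic $k$-subgroup $P$ of $G$. Every unipotent algebraic group in characteristic zero is connected and is recovered from its Lie algebra by exponentiation, so $\ovl{\Gamma} \subseteq R_u(P)$, whence $\Gamma \subseteq R_u(P)(k)$.

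Suppose now $p := \char(k) > 0$. By Remarks~\ref{rems:fabulous}(v), the smoothness hypothesis of the lemma holds for $G$ regarded as a $\ovl{k}$-group, so by the characterisation of Herpel \cite[Thm.~1.1]{herpel} recalled just before the lemma, $p$ is a pretty good prime for $G$; in particular $p$ is not a torsion prime for the root system of $G$. Under this hypothesis, the results of Tits~\cite{tits3} on plongeability of unipotent subgroups imply that $\Gamma$ is contained in the unipotent radical of some parabolic $k$-subgroup of $G$, as required.

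The substantive content in positive characteristic is the descent step: over $\ovl{k}$, plongeability of $\Lie(\ovl{\Gamma})_{\ovl{k}}$ (and hence of $\ovl{\Gamma}_{\ovl{k}}$) is immediate from \cite[Thm.~2.2]{lmt} since $p$ is not a torsion prime, and the task is to promote this to the existence of a $k$-defined parabolic. This is exactly the role of \cite{tits3}, whose proof ultimately rests on fixed-point arguments for unipotent group actions on the spherical $k$-building of $G$; this building-theoretic input is the main obstacle. A more self-contained alternative would induct on $\dim G$, using Lemma~\ref{lem:pretty_good_descent} and Remark~\ref{rem:levi_plong} to reduce to proper Levi $k$-subgroups and to central quotients, but any such induction still leans on the same building-theoretic core at its base.
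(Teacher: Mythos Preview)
Your positive-characteristic argument has two genuine gaps.

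First, your use of Remarks~\ref{rems:fabulous}(v) is in the wrong direction: that remark says that condition~(a) over $\ovl{k}$ implies condition~(a) over $k$, not the converse. The hypothesis of the lemma concerns only $k$-subgroups and $k$-subalgebras, so you cannot immediately invoke Herpel's characterisation (which is stated over $\ovl{k}$) to deduce that $p$ is pretty good. In any case this detour is unnecessary: \cite[Prop.~2]{tits3} uses smoothness of centralisers directly, not the combinatorial notion of pretty good prime.

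Second, and more seriously, your assertion that ``the results of Tits~\cite{tits3} imply that $\Gamma$ is contained in the unipotent radical of some parabolic $k$-subgroup of $G$'' is not substantiated, and as used in the paper \cite{tits3} does not yield this directly. What one obtains from \cite{tits3} is the following: for the last nontrivial term $Z$ of the descending central series of $N=\ovl{\Gamma}$, each element of $Z(k_s)$ is $k_s$-plongeable (Prop.~2, since centralisers are smooth), so $Z\subseteq R_u(P)$ for some $k_s$-parabolic $P$ (Cor.~3); taking $P=\mathscr{P}_{k_s}(Z(k_s))$ one gets that $P$ is $k$-defined (Cor.~1) and that $N(k_s)\subseteq P(k_s)$ (Cor.~2). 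This gives $\Gamma\subseteq P$ with $P$ proper --- but \emph{not} $\Gamma\subseteq R_u(P)$. One then projects to a Levi $k$-subgroup $L$ of $P$, applies the inductive hypothesis (the smoothness assumption passes to $L$ by Lemma~\ref{lem:pretty_good_descent}(a)) to $c_L(\Gamma)$, and concludes via Remark~\ref{rem:levi_plong}.

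So the induction on $\dim(G)$ that you describe as ``a more self-contained alternative'' is not an alternative at all --- it is the argument. Your final paragraph correctly identifies the ingredients, but the body of the proof needs to carry it out rather than hope that a single citation to \cite{tits3} absorbs it. (Also, central quotients via Lemma~\ref{lem:pretty_good_descent}(b) are not used here; they enter in the proof of Proposition~\ref{prop:fab_crit}.)

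Your characteristic~$0$ case is correct, though note that the paper's inductive argument handles both characteristics uniformly without a case split.
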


\begin{proof}
 Let $\Gamma$ be a subgroup of $G(k)$ consisting of unipotent elements.  Without loss $\Gamma\neq 1$.  We use induction on $\dim(G)$.  The result is trivial if $\dim(G)= 0$.  Let $N$ be the algebraic subgroup of $G$ generated by $\Gamma$.  There is no harm in taking $\Gamma$ to be $N(k)$.  Let $Z$ be the last nontrivial term of the descending central series of $N$ (this makes sense as $N$ is unipotent).  Then $Z$ is nontrivial and $k$-defined (see \cite[Ch.\ 6f.]{milne}).  For any $u\in Z(k_s)$, $C_G(u)$ is smooth by hypothesis, so $u$ is $k_s$-plongeable by \cite[Prop.\ 2]{tits3}.  Hence by \cite[Cor.\ 3]{tits3} there is a $k_s$-defined parabolic subgroup $P$ of $G$ such that $Z\subseteq R_u(P)$.  We can take $P$ to be the subgroup $\mathscr{P}_{k_s}(Z(k_s))$ in the sense of \cite[Sec.\ 1]{tits3}; then $P$ is $k$-defined by \cite[Cor.\ 1]{tits3} and $N(k_s)\subseteq P(k_s)$ by \cite[Cor.\ 2]{tits3}.  Note that $P\neq G$ since $N\neq 1$.
 
 Let $L$ be a Levi $k$-subgroup of $P$.  By Lemma~\ref{lem:pretty_good_descent}(a) and our induction hypothesis, $c_L(\Gamma)$ is plongeable in $L$.  Hence $\Gamma$ is plongeable in $G$ by Remark~\ref{rem:levi_plong}.
\end{proof}

\begin{prop}
\label{prop:fab_crit}
 Suppose that centralisers of $k$-subgroups of $G$ and $k$-subalgebras of $\frakg$ are smooth.  Then for any $k$-subalgebra $\frakh$ of $\frakg$ consisting of nilpotent elements, there is a parabolic $k$-subgroup $P$ of $G$ such that $\frakh\subseteq \Lie(R_u(P))$.
\end{prop}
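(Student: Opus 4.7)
The plan is to mirror the proof of Lemma~\ref{lem:smooth_plong} on the Lie algebra side, with the descending central series of a unipotent subgroup replaced by the descending central series of a nilpotent Lie algebra. I would induct on $\dim G$, the base case being trivial.

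For the inductive step, assume $\frakh \neq 0$. Since every nilpotent element of $\frakg$ is ad-nilpotent on $\frakg$ (via Leibniz applied in a faithful representation), Engel's theorem forces $\frakh$ to be a nilpotent Lie algebra. Let $\fraka$ be the last nonzero term of the descending central series of $\frakh$: this is a $k$-defined nonzero abelian ideal of $\frakh$ lying in the centre $\frakz(\frakh)$, and it still consists of nilpotent elements of $\frakg$.

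The key step is to produce a parabolic $k$-subgroup $P$ of $G$ such that $\fraka \subseteq \Lie(R_u(P))$ and such that $\frakn_\frakg(\fraka) \subseteq \Lie(P)$. To this end, I would develop a Lie algebra analogue of the Kempf--Rousseau/Tits machinery used in Lemma~\ref{lem:smooth_plong}: for any nonzero $x \in \fraka(k_s)$, smoothness of $\calC_G(x)$ together with the fact that $0$ lies in the closure of the $G$-orbit of $x$ (because $x$ is nilpotent) produces a destabilising cocharacter in $Y_{k_s}(G)$. Taking an optimal such cocharacter $\lambda$ for the whole subspace $\fraka$ (in the sense of Hesselink) yields a $k_s$-parabolic $P_\lambda$ with $\fraka \subseteq \Lie(R_u(P_\lambda))$; uniqueness-up-to-conjugacy of the optimal class upgrades $P_\lambda$ to a $k$-defined parabolic $P$, and optimality forces $N_G(\fraka)(k_s) \subseteq P(k_s)$. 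Smoothness of $\calN_G(\fraka)$, which follows from hypothesis (a) by the arguments of Section~\ref{subsec:smooth}, then gives $\frakn_\frakg(\fraka) = \Lie(N_G(\fraka)) \subseteq \Lie(P)$. (An alternative route, available because hypothesis (a) forces $\char(k)$ to be a pretty good prime, is to use a Springer-type isomorphism to attach to $\fraka$ a unipotent $k$-subgroup $U$ of $G$ with $\Lie(U) \supseteq \fraka$ and then to apply Lemma~\ref{lem:smooth_plong} to $U(k)$.)

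With $P$ in hand the induction closes: since $\fraka$ lies in $\frakz(\frakh)$, we get $\frakh \subseteq \frakn_\frakg(\fraka) \subseteq \Lie(P)$. Choose a Levi $k$-subgroup $L$ of $P$; by Lemma~\ref{lem:pretty_good_descent}(a) the smoothness hypothesis is inherited by $L$. The projection $c_L(\frakh) \subseteq \Lie(L)$ is a $k$-subalgebra whose elements remain nilpotent (nilpotency is preserved under the limit $\lim_{a \to 0} \lambda(a)\cdot(-)$, being a closed condition), and $\dim L < \dim G$ since $\fraka \neq 0$ forces $P \neq G$. The inductive hypothesis applied inside $L$ supplies a parabolic $k$-subgroup $Q$ of $L$ with $c_L(\frakh) \subseteq \Lie(R_u(Q))$; the Lie algebra analogue of Remark~\ref{rem:levi_plong} then yields $\frakh \subseteq \Lie(R_u(QR_u(P)))$, and $QR_u(P)$ is a parabolic $k$-subgroup of $G$, completing the induction.

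The main obstacle is the construction of $P$ in the key step above. In the group case this is outsourced entirely to \cite[Prop.~2, Cors.~1--3]{tits3}; for subalgebras one must either develop a genuine Lie algebra analogue of that machinery (handling both plongeability of a single nilpotent element and the rationality of the optimal parabolic attached to a subspace), or transfer the problem to the group setting via a Springer-type isomorphism, which itself is only available because of the smoothness hypothesis. The delicate point in either case is extracting a $k$-defined parabolic from data that is a priori only defined over $k_s$ or $\ovl{k}$.
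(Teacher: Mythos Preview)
Your outer scaffold matches the paper: induct on $\dim G$, invoke Engel to get $\frakz(\frakh)\neq 0$, produce a proper $k$-parabolic $P$ with $\frakh\subseteq\Lie(P)$, project to a Levi $k$-subgroup, and close the induction via Lemma~\ref{lem:pretty_good_descent}(a) and Remark~\ref{rem:levi_plong}. The divergence is entirely in your ``key step'', which you rightly flag as the obstacle and do not actually carry out.

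The paper does \emph{not} attempt a Lie-algebra version of Kempf--Rousseau/Hesselink optimality, nor a Springer isomorphism. Instead it converts the problem into a group-theoretic one by iterating centralisers. Pick a single $0\neq z\in\frakz(\frakh)$ and set $M=C_G(z)$; this is smooth by hypothesis, hence $k$-defined with $\frakh\subseteq\Lie(M)$. Now set $Z=C_G(M)$: smoothness again gives $z\in\Lie(Z)$, so $\dim Z>0$, and one checks $Z=Z(M)$ is commutative. After replacing $G$ by $C_G(S)$ for $S$ a maximal $k$-torus of $Z$ (legitimate by Lemma~\ref{lem:pretty_good_descent}(a) and Remark~\ref{rem:plong_subgp}) and passing to $G/S$ via Lemma~\ref{lem:pretty_good_descent}(b), the image of $Z$ is a nontrivial unipotent $k$-group. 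Now Lemma~\ref{lem:smooth_plong} together with \cite[Cor.~2]{tits3} furnishes a proper $k$-parabolic containing the image of $M$, and pulling back gives a proper $k$-parabolic $P$ of $G$ with $\frakh\subseteq\Lie(M)\subseteq\Lie(P)$. This sidesteps both the rationality problems for optimal parabolics over imperfect fields and any need for instability theory for subspaces of $\frakg$.

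One further point: your appeal to smoothness of $\calN_G(\mathfrak{a})$ is not justified by the hypothesis of the proposition, which assumes only centraliser smoothness (condition~(a) of Definition~\ref{def:fabulous}), not normaliser smoothness (condition~(b)). In your outline this is harmless since $\mathfrak{a}\subseteq\frakz(\frakh)$ already gives $\frakh\subseteq\frakc_\frakg(\mathfrak{a})$, so you could work with $C_G(\mathfrak{a})$ throughout; but note that the paper's argument genuinely uses only centralisers, which is precisely the content of the proposition (that (a) alone implies (c)).
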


\begin{proof}
 We can assume without loss that $G$ is connected and $\frakh\neq 0$.  We use induction on $\dim(G)$.  The result holds trivially if $\dim(G)= 0$.  Let $\frakh$ be a $k$-subalgebra of $\frakg$ consisting of nilpotent elements.  Then $\frakh$ is a nilpotent Lie algebra by Engel's Theorem \cite[3.2 Thm.]{Hum72}, so $\frakz(\frakh)\neq 0$.  Pick $0\neq z\in \frakz(\frakh)$ and let $M= C_G(z)$.  By hypothesis $M$ is smooth, so $M$ is $k$-defined and $\frakh\subseteq \frakm:= \Lie(M)$.
 
 Let $Z= C_G(M)$.  Since $Z$ centralises $M$, $Z$ centralises $\frakm$; in particular, $Z$ centralises $z$.  Hence $Z\subseteq M$, and we deduce that $Z= Z(M)$.  It follows that $Z$ is commutative.  Since $Z$ is smooth, $z$ belongs to $\Lie(Z)$, so $\dim(Z)> 0$.  Fix a maximal $k$-torus $S$ of $Z$.  Since $M\subseteq C_G(S)$ and $\frakh\subseteq \frakm$, we can assume without loss by Remark~\ref{rem:plong_subgp} and Lemma~\ref{lem:pretty_good_descent}(a) that $G= C_G(S)$.  Let $\psi\colon G\to G/S$ be the canonical projection.  Then $\psi(Z)$ is unipotent, so by Lemma~\ref{lem:pretty_good_descent}(b) and Lemma~\ref{lem:smooth_plong} there is a parabolic $k$-subgroup $Q$ of $G/S$ such that $\psi(Z)\subseteq R_u(Q)$.  By \cite[Cor.\ 2]{tits3} we can assume that $\psi(M)\subseteq Q$.  So $M\subseteq P:= \psi^{-1}(Q)$, a $k$-parabolic subgroup of $G$ \cite[V.22.6 Thm.]{borel}.  Now $\psi(Z)$ is nontrivial since $d\psi(z)\neq 0$, so $Q$ is a proper parabolic subgroup of $G/S$; hence $P$ is proper.
 
 Let $L$ be a Levi subgroup of $P$.  Then $\dim(L)< \dim(P)$, so $c_{\Lie(L)}(\frakh)$ is plongeable in $L$ by our induction hypothesis.  It follows from Remark~\ref{rem:levi_plong} that $\frakh$ is plongeable in $G$, so we are done.
\end{proof}

We finish by showing that if $p$ is sufficiently large then $p$ is fabulous for $G$.

\begin{prop}
\label{prop:generic_fab}
 For $r\geq 0$, there is a prime $p_0= p_0(r)$ with the following property: for any prime $p\geq p_0$, any field $k$ of characteristic $p$ and any connected reductive $k$-group $G$ of rank at most $r$, $p$ is fabulous for $G$.
\end{prop}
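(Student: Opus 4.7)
The plan is to combine three existing results, each of which gives a bound depending only on the ``type'' of $G$, i.e.\ on the isomorphism class of the root datum of $G_{\ovl k}$. For condition (a) of Definition~\ref{def:fabulous} I would use Herpel's theorem \cite[Thm.~1.1]{herpel}, which shows that centralisers of $\ovl k$-subgroups and $\ovl k$-subalgebras of $\frakg$ are smooth whenever $p$ is a pretty good prime for $G_{\ovl k}$; this transfers to $k$ by Remark~\ref{rems:fabulous}(v). For condition (b) I would invoke the Herpel--Stewart theorem \cite[Thm.~A]{HeSt}, which provides an explicit numerical lower bound on $p$, depending only on the type of $G$, forcing $\calN_G(\frakh)$ to be smooth for every $k$-subalgebra $\frakh$ of $\frakg$. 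Condition (c) then follows automatically from Proposition~\ref{prop:fab_crit}.

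Next I would reduce to a finite list. Over $\ovl k$, the group $G$ has a root datum whose root system has rank at most $r$; by the Cartan--Killing classification there are only finitely many such root systems, for each there are only finitely many intermediate lattices between the root and weight lattices, and a central torus factor contributes nothing to the smoothness or plongeability questions at hand. Thus there are only finitely many possibilities for the relevant type data. For each of them Herpel's criterion excludes only finitely many primes, and the Herpel--Stewart bound is a single explicit integer. Setting $p_0(r)$ to be any prime strictly larger than every exception and every threshold on this finite list then yields a bound that works uniformly for every connected reductive $k$-group $G$ of rank at most $r$, independently of $k$.

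The main obstacle is verifying that the two cited thresholds really do depend only on the type of $G$, rather than on $k$ or on the particular $k$-form chosen, so that the finite-list reduction is legitimate. The fact that smoothness of the scheme-theoretic centralisers and normalisers is both preserved and detected by faithfully flat base change (hence may be checked after $-\otimes_k \ovl k$) is what makes this bookkeeping go through; once that is granted, assembling the final bound is routine.
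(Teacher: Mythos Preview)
Your proposal is correct and follows essentially the same route as the paper: reduce to finitely many $\ovl{k}$-isomorphism types of bounded rank, invoke Herpel's pretty-good-prime criterion for condition~(a), derive condition~(c) from Proposition~\ref{prop:fab_crit}, and appeal to \cite[Thm.~A]{HeSt} for condition~(b). The paper's version is terser---it simply asserts the finiteness of types and that very good implies pretty good---whereas you spell out the role of the central torus and the base-change argument more explicitly, but the logical skeleton is identical.
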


\begin{proof}
 Since there are only finitely many possibilities for $G$ up to isomorphism (as a $\ovl{k}$-group), we can ensure that $p$ is very good for $G$ by taking $p$ sufficiently large.  Then $p$ is pretty good for $G$, and then --- as observed above --- condition (a) of Definition~\ref{def:fabulous} holds, which implies that condition (c) holds by Proposition~\ref{prop:fab_crit}.  Condition (b) holds for large enough $p$ by \cite[Thm.\ A]{HeSt}.  The result follows.
\end{proof}

\begin{rem}
 We do not have an analogous result to Proposition~\ref{prop:generic_fab} if we also require normalisers of subgroups to be smooth: see \cite[Lem.\ 11.11]{HeSt}.
\end{rem}

\section{Cocharacter-closed orbits and $G$-complete reducibility}
\label{sec:cochar}

We introduce the notion of complete reducibility for subalgebras of $\frakg$ and explain the link with geometric invariant theory (GIT).
At the end of the section, we extend to arbitrary $k$ the main results from \cite{mcninch}.
As in \cite{BMR:semisimplification}, our main tool from GIT is the notion of cocharacter-closure, introduced in \cite{GIT} and \cite{cochar}.

\begin{defn}
 Let $X$ be a $k$-defined affine $G$-variety and let $x\in X$ (we do not require $x$ to be a $k$-point). We say that the orbit $G(k)\cdot x$ is \emph{cocharacter-closed over $k$} if for all $\lambda\in Y_k(G)$ such that $x':= \lim_{a\to 0} \lambda(a)\cdot x$ exists, $x'$ belongs to $G(k)\cdot x$. 
 
 If $k= \ovl{k}$ then it follows from the Hilbert-Mumford Theorem \cite[Thm.~1.4]{kempf} that $G(k)\cdot x$ is cocharacter-closed over $k$ if and only if $G(k)\cdot x$ is closed. 
\end{defn}

The following is \cite[Thm.\ 1.3]{cochar}.

\begin{thm}[Rational Hilbert-Mumford Theorem]
\label{thm:RHMT}
 Let $G$, $X$, $x$ be as above. Then there is a unique $G(k)$-orbit ${\mathcal O}$ such that (a) ${\mathcal O}$ is cocharacter-closed over $k$, and (b) there exists $\lambda\in Y_k(G)$ such that $\lim_{a\to 0} \lambda(a)\cdot x$ belongs to ${\mathcal O}$.
\end{thm}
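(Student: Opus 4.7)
The plan is to prove existence by iterating cocharacter limits and uniqueness by a subtle rationality argument comparing two competing cocharacters.

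For existence, I would iterate. If $G(k)\cdot x$ is cocharacter-closed over $k$, take $\mathcal{O} = G(k)\cdot x$ with trivial $\lambda$. Otherwise, pick $\lambda_1\in Y_k(G)$ with $x_1 := \lim_{a\to 0}\lambda_1(a)\cdot x$ existing and $x_1\notin G(k)\cdot x$. The $\lambda_1$-orbit map $\mathbb{A}^1\to X$ factors through $\overline{G\cdot x}$ (computed over $\ovl{k}$), and the standard observation that a non-trivial cocharacter limit produces a strictly smaller geometric orbit shows that the process terminates after finitely many steps at a cocharacter-closed-over-$k$ orbit $\mathcal{O}$. Composing the successive cocharacters via a substitution such as $\tau(a) := \mu(a)\lambda_1(a^N)$ for $N$ sufficiently large (cf.\ the composite-cocharacter constructions in \cite{GIT}) then produces a single $k$-cocharacter $\tau$ with $\lim_{a\to 0}\tau(a)\cdot x\in\mathcal{O}$.

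For uniqueness, suppose $\mathcal{O}_1,\mathcal{O}_2$ both satisfy (a) and (b), witnessed by $\lambda_i\in Y_k(G)$ with limits $x_i\in\mathcal{O}_i$. I would proceed by induction on $\dim G$, with the torus base case being transparent. In the inductive step, $x_1$ is fixed by $\mathrm{Im}(\lambda_1)$ and is reached from $x$ through the parabolic $P_{\lambda_1}$. The crucial move is to replace $\lambda_2$ by a $G(k)$-conjugate with image in the Levi $L_{\lambda_1}$ while preserving the $G(k)$-orbit of its limit on $x$. Once this is achieved, both limits can be computed inside the proper reductive subgroup $L_{\lambda_1}$, and applying the inductive hypothesis to $L_{\lambda_1}$ acting on an appropriate $L_{\lambda_1}$-stable subvariety of $X$ (namely, points whose $P_{\lambda_1}$-limit lies in $\mathcal{O}_1\cup \mathcal{O}_2$) yields $\mathcal{O}_1 = \mathcal{O}_2$.

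The main obstacle is precisely this compatibility step. Over $\ovl{k}$ uniqueness is immediate from the classical Hilbert-Mumford theorem, since $\mathcal{O}_1,\mathcal{O}_2$ would both equal the unique closed orbit in $\overline{G\cdot x}$; however, over a general $k$ a cocharacter-closed-over-$k$ orbit need not be geometrically closed, and one cannot conjugate $\lambda_2$ into $L_{\lambda_1}$ by an element of $G(\ovl{k})$ alone. Securing the rationality of the conjugating element requires the deeper geometric invariant theory techniques developed in \cite{cochar}, in particular an optimality theory \emph{\`a la} Kempf combined with the $k$-defined parabolic/Levi correspondence of Lemma~\ref{lem:P,L is Pl, Ll} and arguments exploiting the convex geometry of the spherical building of $G$ over $k$.
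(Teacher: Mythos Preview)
The paper does not prove this statement: it simply records it as \cite[Thm.~1.3]{cochar} and uses it as a black box. So there is no proof in the paper to compare your proposal against.

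That said, your sketch is broadly in the right spirit, and you correctly identify that the rationality of the conjugating element in the uniqueness step is the crux (and that it requires the machinery of \cite{cochar}). One point in your existence argument deserves care, however: you claim that if $x_1=\lim_{a\to 0}\lambda_1(a)\cdot x\notin G(k)\cdot x$ then the \emph{geometric} orbit $G\cdot x_1$ is strictly smaller than $G\cdot x$. Over $\ovl{k}$ this is standard, but over general $k$ it is not obvious a priori that leaving the $G(k)$-orbit forces you to leave the $G(\ovl{k})$-orbit. What makes this work is precisely one of the main results of \cite{cochar} (and \cite{GIT}): if $\lim_{a\to 0}\lambda(a)\cdot x$ lies in $G\cdot x$ then it is $R_u(P_\lambda)(k)$-conjugate to $x$, hence in $G(k)\cdot x$. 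So your termination argument secretly already uses nontrivial input from the same source as the theorem itself, and is not as elementary as the phrase ``standard observation'' suggests. Likewise, combining a chain of cocharacter limits into a single $k$-cocharacter (your $\tau(a)=\mu(a)\lambda_1(a^N)$) is more delicate than it looks when the cocharacters do not commute; the actual argument in \cite{cochar} handles this via Levi subgroups rather than by naive composition.
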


Next recall the notion of $G$-complete reducibility for subgroups of $G$.

\begin{defn}
	\label{def:gcr}
 Let $H$ be a subgroup of $G$. We say that $H$ is \emph{$G$-completely reducible over $k$} ($G$-cr over $k$) if for any parabolic $k$-subgroup $P$ of $G$ such that $P$ contains $H$, there is a Levi $k$-subgroup $L$ of $P$ such that $L$ contains $H$. 
 We say that $H$ is \emph{$G$-irreducible over $k$} ($G$-ir over $k$) if $H$ is not contained in any proper parabolic $k$-subgroup of $G$ at all.
 
 We say that $H$ is \emph{$G$-completely reducible} ($G$-cr) if $H$ is $G$-completely reducible over $\ovl{k}$.
\end{defn}

For more on $G$-complete reducibility for subgroups of $G$, see \cite{serre1.5}, \cite{serre2}, \cite{BMR}; our main focus in this paper is the analogue for Lie algebras. 
We first recall the definition of $G$-complete reducibility for Lie algebras and then also the link between this concept and GIT using generating tuples, due to Richardson.
Most of what follows was originally written down in \cite[\S 5]{sphericalcochar}.

\begin{defn}
	\label{def:relLieGcr}
	A subalgebra $\frakh$ of $\frakg$
	is \emph{$G$-completely reducible over $k$} ($G$-cr over $k$) if for any parabolic $k$-subgroup $P$ of $G$ such that $\frakh\subseteq \Lie(P)$,
	there is a Levi $k$-subgroup $L$ of $P$ such that $\frakh\subseteq \Lie(L)$ (see \cite[Def.\ 5.3]{sphericalcochar}). 
	We say that $\frakh$ is \emph{$G$-irreducible over $k$} ($G$-ir over $k$) if $\frakh$ is not contained in $\Lie(P)$ for any proper parabolic $k$-subgroup of $G$ at all.
	We say that $\frakh$ is \emph{$G$-indecomposable over $k$} ($G$-ind over $k$) if $\frakh$ is not contained in $\Lie(L)$ for any proper Levi $k$-subgroup $L$ of $G$.
	
	As in the subgroup case, we say that $\frakh$ 
	is \emph{$G$-completely reducible} (resp., \emph{$G$-irreducible}, \emph{$G$-indecomposable}) if it is $G$-completely reducible over $\ovl{k}$ (resp., $G$-irreducible over $\ovl{k}$, $G$-indecomposable over $\ovl{k}$).
\end{defn}

For $k = \ovl{k}$, this notion is due to McNinch, see \cite{mcninch} and also \cite[\S 5.3]{GIT}.

\begin{ex}
\label{ex:PGL_2_ir}
 Let $k$, $G$, $\frakh$ and $\frakm$ be as in Example~\ref{ex:PGL_2}.  Then $\frakh$ is $G$-ir but the ideal $\frakm$ of $\frakh$ is not $G$-cr.
\end{ex}

\begin{ex}
\label{ex:nilpt}
 If $p$ is fabulous for $G$ and $0\neq \frakh$ is a subalgebra of $\frakg$ consisting of nilpotent elements then $\frakh$ is not $G$-cr: for $\frakh\subseteq \Lie(B)$ for some Borel subgroup $B$ of $G$, but clearly $\frakh\not\subseteq \Lie(T)$ for any maximal torus $T$ of $B$.
\end{ex}

\begin{rem}
	\label{rem:linear}
	The notion of $G$-complete reducibility for subgroups and subalgebras generalizes the concept of semisimplicity from representation theory in the following sense. 
	If $H$ is a subgroup of $\GL(V)$ or $\SL(V)$, or $\frakh$ is a subalgebra of $\mathfrak{gl}(V)$ or $\mathfrak{sl}(V)$, then $H$ (resp.~$\frakh$) is $\GL(V)$-cr or $\SL(V)$-cr if and only if $V$ is a semisimple $H$-module (resp.~$\frakh$-module).
	See \cite{serre1.5}, \cite[(3.2.2)]{serre2} for the case of subgroups; the case of subalgebras is almost identical.  
\end{rem}

\begin{rem}
	\label{rem:absolute}
	Let $H$ be a subgroup of $G$ and $\frakh$ a subalgebra of $\frakg$. 
	If $k'/k$ is an algebraic field extension then we may also regard $G$ as a $k'$-group, and it therefore makes sense to ask whether 
	$H$ and $\frakh$ are $G$-cr over $k'$ as well as whether they are $G$-cr over $k$.
\end{rem}

\begin{rem}
	Note that Definitions~\ref{def:gcr} and \ref{def:relLieGcr} make sense even if $H$ and $\frakh$ are not $k$-defined. 
	We also note that since $\frakp_{g\cdot\lambda} = g\cdot \frakp_\lambda$ and $\frakl_{g\cdot\lambda} = g\cdot\frakl_\lambda$ for any $\lambda\in Y(G)$ and any $g\in G$ (see, e.g., \cite[\S 6]{BMR}), it follows that $\frakh$ is $G$-cr over $k$ if and only if every $\Ad(G(k))$-conjugate of $\frakh$ is. 
	More generally, one can show that if $\frakh$ is $G$-cr over $k$ (resp., $G$-ir over $k$, resp., $G$-ind over $k$) then so is $d\phi(\frakh)$, for any $k$-defined automorphism $\phi$ of $G$.
	Similar observations hold for subgroups.
\end{rem}

We now recall the link to GIT. For this, we need the following definition.

\begin{defn}\label{def:generating tuple}
	Let $\frakh$ be a Lie algebra. We call ${\mathbf x}=(x_1,\dots,x_m)\in \frakh^m$, for some $m\in \mathbb{N}$, a \emph{generating tuple for $\frakh$} if $x_1,\dots, x_m$ is a generating set for $\frakh$ as a Lie algebra.
\end{defn}

The next theorem shows the relevance of the previous definition to the study of complete reducibility. 
It is \cite[Thm.~5.4]{sphericalcochar}; see also \cite[Thm.~1(i)]{mcninch} for the case $k = \ovl{k}$.

\begin{thm}\label{thm:mcninch1}
	Let $\frakh$ be a subalgebra of $\frakg = \Lie(G)$. 
	Let $\tuple{x} \in \frakg^m$ be a generating tuple for $\frakh$, and let $G$ act on $\frakg^m$ by simultaneous conjugation.
	Then $\frakh$ is $G$-completely reducible over $k$ if and only if the $G(k)$-orbit of $\tuple{x}$ is cocharacter-closed in $\frakg^m$ over $k$.
\end{thm}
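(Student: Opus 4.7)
The plan is to translate every condition on the subalgebra $\frakh$ into a condition on the tuple $\tuple{x}$ and a cocharacter $\lambda \in Y_k(G)$, after which the equivalence reduces to several standard facts from Section~\ref{sec:prelims}. Three translations are central: (1) since $\frakp_\lambda$ is a subalgebra containing the generators $x_i$ of $\frakh$, Lemma~\ref{lem:liealgebrasofRpars}(i) gives that $\frakh \subseteq \frakp_\lambda$ iff $\tuple{x} \in \frakp_\lambda^m$ iff $\lim_{a \to 0}\lambda(a)\cdot \tuple{x}$ exists; (2) every $k$-defined parabolic of $G$ is of the form $P_\lambda$ for some $\lambda \in Y_k(G)$ (Lemma~\ref{lem:P,L is Pl, Ll}); (3) by \cite[Prop.~2.11]{cochar} (as recalled at the end of Section~\ref{sec:prelims}), for $\tuple{x} \in \frakp_\lambda^m$, there is a Levi $k$-subgroup $L$ of $P_\lambda$ with $\frakh \subseteq \Lie(L)$ if and only if $c_\lambda(\tuple{x}) = u\cdot \tuple{x}$ for some $u \in R_u(P_\lambda)(k)$.

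For the direction ``$G$-cr over $k$ $\Rightarrow$ cocharacter-closed over $k$'', I would take $\lambda \in Y_k(G)$ with $\tuple{y} := \lim_{a \to 0}\lambda(a) \cdot \tuple{x}$ existing. Then $\frakh \subseteq \Lie(P_\lambda)$, so $G$-complete reducibility yields a Levi $k$-subgroup $L$ of $P_\lambda$ containing $\frakh$, and Lemma~\ref{lem:parprops}(iii) provides $u \in R_u(P_\lambda)(k)$ with $L = u L_\lambda u^{-1}$. Then $u^{-1}\cdot \tuple{x} \in \frakl_\lambda^m$, hence is fixed by $\lambda$; combining this with $c_\lambda(u) = 1$ and the multiplicative property $c_\lambda(g\cdot z) = c_\lambda(g)\cdot c_\lambda(z)$ for $g \in P_\lambda$ and $z \in \frakp_\lambda^m$, one obtains $\tuple{y} = c_\lambda(\tuple{x}) = c_\lambda(u^{-1}\cdot \tuple{x}) = u^{-1}\cdot \tuple{x} \in G(k)\cdot \tuple{x}$, as required.

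Conversely, assume $G(k)\cdot \tuple{x}$ is cocharacter-closed over $k$ and let $P = P_\lambda$ be a $k$-parabolic with $\frakh \subseteq \Lie(P)$. Then $c_\lambda(\tuple{x}) = \lim_{a \to 0}\lambda(a)\cdot \tuple{x}$ exists and lies in $G(k)\cdot \tuple{x}$ by cocharacter-closedness, so $c_\lambda(\tuple{x}) = g\cdot \tuple{x}$ for some $g \in G(k)$. By translation~(3), it remains to show that such a $g$ can in fact be chosen from $R_u(P_\lambda)(k)$.

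I expect this ``rational upgrade'' step to be the main obstacle. The approach I would take is to set $\mu := g^{-1}\cdot \lambda \in Y_k(G)$; a short computation using $c_\lambda(\tuple{x}) = g\cdot \tuple{x}$ and the fact that $c_\lambda(\tuple{x}) \in \frakl_\lambda^m$ is $\lambda$-fixed shows that $\mu$ fixes $\tuple{x}$, so $\frakh \subseteq \frakp_\lambda \cap \frakl_\mu = \frakp_\lambda \cap g^{-1}\cdot \frakl_\lambda$. One then combines the two $k$-cocharacters $\lambda$ and $\mu$ --- whose associated parabolics both contain $\frakh$ --- into a single $k$-cocharacter $\nu \in Y_k(G)$ with $P_\nu = P_\lambda$ and $\frakh \subseteq \frakl_\nu$, producing the required Levi $k$-subgroup of $P_\lambda$. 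Such ``common Levi'' constructions, exploiting intersections of parabolic subgroups that share a maximal $k$-torus, are precisely the kind of argument furnished by the rational Hilbert--Mumford machinery of \cite{cochar}, and I would invoke that theory to close out the proof.
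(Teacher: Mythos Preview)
The paper does not give its own proof of this theorem: it simply records that the result is \cite[Thm.~5.4]{sphericalcochar} (with the algebraically closed case being \cite[Thm.~1(i)]{mcninch}). So there is no in-paper argument to compare against directly; your write-up is effectively a reconstruction of the cited proof.

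Your forward direction is correct and complete. For the converse, you correctly isolate the crux: upgrading $c_\lambda(\tuple{x}) = g\cdot\tuple{x}$ with $g\in G(k)$ to $c_\lambda(\tuple{x}) = u\cdot\tuple{x}$ with $u\in R_u(P_\lambda)(k)$. Your idea of introducing $\mu = g^{-1}\cdot\lambda$, so that $\mu$ fixes $\tuple{x}$ and hence $\frakh\subseteq \frakl_\mu$, is exactly the right first move. However, the step ``combine $\lambda$ and $\mu$ into $\nu\in Y_k(G)$ with $P_\nu = P_\lambda$ and $\frakh\subseteq \frakl_\nu$'' is not justified as stated: $\lambda$ and $\mu$ need not lie in a common $k$-torus, and the intersection $P_\lambda\cap L_\mu$ need not be well-behaved without further argument. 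This is precisely the non-trivial content that the cocharacter-closure theory of \cite{cochar} supplies, and the cleanest fix is to cite the relevant result there directly rather than re-derive it: one of the main structural theorems of \cite{cochar} asserts that if $G(k)\cdot\tuple{x}$ is cocharacter-closed over $k$ and $\lambda\in Y_k(G)$ is such that $\tuple{x}' := \lim_{a\to 0}\lambda(a)\cdot\tuple{x}$ exists, then $\tuple{x}'\in R_u(P_\lambda)(k)\cdot\tuple{x}$. Invoking that statement (together with your translation~(3), i.e.\ \cite[Prop.~2.11]{cochar}) closes the argument immediately; your sketch of how one might prove it by hand is heading in the right direction but would need substantially more detail to stand on its own.
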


The next result is \cite[Thm.\ 5.5]{sphericalcochar}.
Note that if $k$ is perfect, then part (i) implies the equivalence of $G$-complete reducibility over $k$ and $G$-complete reducibility 
(over $\ovl{k}$), because the extension $\ovl{k}/k$ is separable for perfect $k$.

\begin{prop}
	\label{prop:ascent_descent}
Let $\frakh$ be a subalgebra of $\frakg$.
\begin{itemize}
\item[(i)] Suppose $\frakh$ is $k$-defined, and let $k'/k$ be a separable algebraic field extension. 
Then $\frakh$ is $G$-completely reducible over $k'$ if and only if $\frakh$ is $G$-completely reducible over $k$.
\item[(ii)] Let $S$ be a $k$-defined torus of $C_G(\frakh)$ and set $L = C_G(S)$. Then $\frakh$ is $G$-completely
reducible over $k$ if and only if $\frakh$ is $L$-completely reducible over $k$.
\end{itemize}
\end{prop}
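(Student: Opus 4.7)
The natural strategy is to convert both statements, via Theorem~\ref{thm:mcninch1}, into assertions about cocharacter-closed $G(k)$-orbits on $\frakg^m$ and then invoke the corresponding rationality results from geometric invariant theory. Since $\frakh$ is $k$-defined, the set of $k$-points $\frakh(k)$ spans $\frakh$ over $\ovl{k}$, so a $k$-basis of $\frakh(k)$ gives a generating tuple $\tuple{x} = (x_1,\ldots,x_m) \in \frakh(k)^m \subseteq \frakg(k)^m$ for $\frakh$. By Theorem~\ref{thm:mcninch1}, $\frakh$ is $G$-completely reducible over $k$ (resp.\ over $k'$) if and only if $G(k)\cdot\tuple{x}$ is cocharacter-closed over $k$ in $\frakg^m$ (resp.\ $G(k')\cdot\tuple{x}$ is cocharacter-closed over $k'$).

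For part (i), the equivalence to prove becomes: for the $k$-point $\tuple{x}$ in the $k$-defined affine $G$-variety $X = \frakg^m$, the $G(k)$-orbit is cocharacter-closed over $k$ if and only if the $G(k')$-orbit is cocharacter-closed over $k'$. This is the standard ascent/descent principle for cocharacter-closure along separable algebraic extensions; the plan is simply to cite the corresponding result for arbitrary $k$-defined affine $G$-varieties proved in \cite{cochar} and apply it directly to $X = \frakg^m$ and the tuple $\tuple{x}$. For part (ii), since $S \subseteq C_G(\frakh)$, each $x_i$ is fixed by $S$ under the adjoint action, so $\tuple{x} \in \frakc_\frakg(S)^m$. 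Centralisers of tori are smooth (see Section~\ref{subsec:smooth}), so $\frakc_\frakg(S) = \Lie(C_G(S)) = \frakl$ and thus $\tuple{x} \in \frakl^m$; moreover, $\tuple{x}$ still generates $\frakh$ as a Lie subalgebra of $\frakl$. Applying Theorem~\ref{thm:mcninch1} once to $G$ and once to $L$, part (ii) reduces to the statement that $G(k)\cdot\tuple{x}$ is cocharacter-closed over $k$ in $\frakg^m$ if and only if $L(k)\cdot\tuple{x}$ is cocharacter-closed over $k$ in $\frakl^m$. This equivalence is a standard reduction to the Levi centraliser of a torus, proved in \cite{cochar} by showing that any $k$-defined destabilising cocharacter for $\tuple{x}$ in $G$ can be replaced, up to $R_u(P_\lambda)(k)$-conjugacy, by one whose image commutes with $S$ (and hence lies in $L$), while the reverse inclusion $Y_k(L) \subseteq Y_k(G)$ is immediate.

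The main obstacle here is not really internal to the proof but lies in the two underlying GIT inputs: the separable ascent/descent theorem for cocharacter-closed rational orbits, and the principle that cocharacter-closure of $\tuple{x} \in \frakl^m$ in $\frakg^m$ is detected inside $L$. Both are substantial results from \cite{GIT} and \cite{cochar}, but once they are available, Theorem~\ref{thm:mcninch1} makes the translation from subalgebras to tuples completely formal, and the proof is essentially a one-line application in each case. The only additional sanity check needed is that a $k$-rational generating tuple for $\frakh$ exists, which is immediate from $k$-definedness and finite-dimensionality.
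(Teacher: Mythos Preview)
The paper does not actually prove this proposition: it simply records it as \cite[Thm.\ 5.5]{sphericalcochar} and moves on. So there is no in-paper argument to compare against; your outline is effectively a reconstruction of the proof from the cited reference, and in spirit it is exactly right. Translating via Theorem~\ref{thm:mcninch1} to a generating tuple $\tuple{x}\in\frakg^m$ and then invoking (i) the separable ascent/descent theorem for cocharacter-closure from \cite{cochar} and (ii) the Levi reduction principle (cocharacter-closure in $G$ versus in $L=C_G(S)$, again from \cite{cochar,GIT}) is precisely how \cite[Thm.\ 5.5]{sphericalcochar} is obtained.

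One small point of care in your write-up: you set up $\tuple{x}$ as a $k$-basis of $\frakh(k)$ in the first paragraph, relying on the hypothesis that $\frakh$ is $k$-defined, and then silently reuse the same tuple for part~(ii). But part~(ii) carries no $k$-definedness hypothesis on $\frakh$ (cf.\ the remark after Definition~\ref{def:relLieGcr}). This is not a real problem, because neither Theorem~\ref{thm:mcninch1} nor the Levi reduction result from \cite{cochar} requires $\tuple{x}$ to be a $k$-point; any generating tuple for $\frakh$ over $\ovl{k}$ will do, and $S$ still fixes each $x_i$ since $S\subseteq C_G(\frakh)$. So for part~(ii) you should simply take an arbitrary generating tuple in $\frakh^m\subseteq\frakg^m$ and drop the claim that it is $k$-rational. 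With that adjustment your proposal is correct and matches the intended argument.
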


The final ingredient we need for our first main result is the connection between complete reducibility of subgroups of $G$ 
and the notion of complete reducibility  in the spherical building of $G$.
Recall that the spherical building $\Delta_k = \Delta_k(G)$ of $G$ over $k$ can be identified as a simplicial complex with the poset of $k$-parabolic subgroups of $G$, with inclusion reversed \cite{tits1}; the parabolic subgroup $G$ corresponds to the empty simplex.
For each $k$-parabolic subgroup $P$ of $G$, we let $\sigma_P$ denote the corresponding simplex in $\Delta_k$.
We note that $k$-points of $G$ induce simplicial automorphisms of $\Delta_k$: for $g\in G(k)$ and a $k$-parabolic subgroup $P$ of $G$, we can define $g\cdot\sigma_P = \sigma_{gPg^{-1}}$.
Recall that the simplicial complex $\Delta_k$ has a \emph{geometric realisation}, which we denote by $\overline{\Delta_k}$ \cite[A.1.1]{abro}.
We have the following definitions.

\begin{defn}(\cite[2.1.4, 2.1.5, 2.2.1]{serre1.5})
\begin{itemize}
\item[(i)] Two simplices $\sigma, \tau$ in $\Delta_k$ are called \emph{opposite} if when we write $\sigma=\sigma_P$ and $\tau=\sigma_Q$ for $k$-parabolic subgroups $P$ and $Q$ of $G$, then $P$ and $Q$ are opposite in $G$; that is, $P\cap Q$ is a common Levi subgroup of $P$ and $Q$.
\item[(ii)] A subcomplex $\Sigma$ of $\Delta_k$ is called \emph{convex} if the corresponding subset $\overline{\Sigma}\subseteq \overline{\Delta_k}$ of the geometric realisation is convex.
\item[(iii)] A convex subcomplex $\Sigma$ of $\Delta_k$ is said to be \emph{$\Delta_k$-completely reducible} ($\Delta_k$-cr) if for every $\sigma\in \Sigma$, there exists $\tau\in \Sigma$ with $\tau$ opposite to $\sigma$.
\end{itemize}
\end{defn}

The following argument forms part of the proof of \cite[Lem.~4]{mcninch}; it provides the key link between $G$-complete reducibility and the building $\Delta_k$.
We give the idea of the proof for completeness.

\begin{prop}\label{prop:liebuildingcr}
Suppose $\frakh$ is a subalgebra of $\frakg$. Then  $\Delta_k^\frakh:=\{\sigma_P\mid \frak{h}\subseteq \Lie(P)\}$ is a convex subcomplex of $\Delta_k$, and it is $\Delta_k$-completely reducible if and only if $\frakh$ is $G$-completely reducible over $k$.
\end{prop}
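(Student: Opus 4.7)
The plan is to reduce everything to a statement about a single point in a Cartesian power of $\frakg$. Choose a generating tuple $\tuple{x}=(x_1,\ldots,x_m)\in\frakh^m$ for $\frakh$, and let $G$ act on $\frakg^m$ by simultaneous conjugation. The key observation is that for any $\lambda\in Y_k(G)$, Lemma~\ref{lem:liealgebrasofRpars} gives $\frakh\subseteq\frakp_\lambda$ iff each $x_i\in\frakp_\lambda$ iff $\tuple{x}\in\frakp_\lambda^m$ iff $\lim_{a\to 0}\lambda(a)\cdot\tuple{x}$ exists in $\frakg^m$. Thus $\Delta_k^\frakh$ coincides with the ``limit exists'' subcomplex $\Delta_k^{\tuple{x}}:=\{\sigma_{P_\lambda}\mid \tuple{x}\in\frakp_\lambda^m\}$ attached to the single GIT point $\tuple{x}$, which brings us into the setting of Theorem~\ref{thm:mcninch1}.

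For the subcomplex and convexity assertions I would first check closure under faces: if $\sigma_Q$ is a face of $\sigma_P$, i.e.\ $Q\supseteq P$ as $k$-parabolics, then $\Lie(Q)\supseteq\Lie(P)\supseteq\frakh$, so $\sigma_Q\in\Delta_k^\frakh$. Convexity I would then deduce from the standard building-theoretic fact (in essence due to Mumford and Tits, with the $k$-rational form developed in \cite{serre1.5}, \cite{serre2} and \cite{BMR}) that for any $k$-defined affine $G$-variety $X$ and any $y\in X$, the set of $\sigma_{P_\lambda}$ for which $\lim_{a\to 0}\lambda(a)\cdot y$ exists forms a convex subcomplex of $\Delta_k$. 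Applying this with $X=\frakg^m$ and $y=\tuple{x}$ yields convexity of $\Delta_k^\frakh=\Delta_k^{\tuple{x}}$.

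For the complete reducibility equivalence I would argue directly using the cocharacter description of opposite parabolics. If $\Delta_k^\frakh$ is $\Delta_k$-cr and $P$ is a $k$-parabolic with $\frakh\subseteq\Lie(P)$, then there exists a $k$-parabolic $Q$ opposite to $P$ with $\frakh\subseteq\Lie(Q)$; the intersection $L=P\cap Q$ is then a common Levi $k$-subgroup of $P$ and $Q$, and $\frakh\subseteq\Lie(P)\cap\Lie(Q)=\Lie(L)$, proving $G$-cr over $k$. Conversely, given $P=P_\lambda$ with $\frakh\subseteq\Lie(P)$ and a Levi $k$-subgroup $L$ of $P$ with $\frakh\subseteq\Lie(L)$, Lemma~\ref{lem:P,L is Pl, Ll} furnishes $\mu\in Y_k(G)$ with $(P,L)=(P_\mu,L_\mu)$; then $P_{-\mu}$ is a $k$-parabolic opposite to $P$ with $\Lie(P_{-\mu})\supseteq\Lie(L_{-\mu})=\Lie(L_\mu)=\Lie(L)\supseteq\frakh$, so $\sigma_{P_{-\mu}}\in\Delta_k^\frakh$ is a simplex opposite to $\sigma_P$.

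The main obstacle is the convexity step: this is the one assertion that does not follow from elementary manipulation of the $P_\lambda$/$L_\lambda$ dictionary, but rather requires genuine geometry of the spherical building (for instance via retractions onto an apartment, or via the convexity theorem for GIT-destabilising parabolics applied to $\tuple{x}\in\frakg^m$). Once convexity is granted, the subcomplex property and the $\Delta_k$-cr $\Leftrightarrow$ $G$-cr equivalence are straightforward bookkeeping using Lemma~\ref{lem:P,L is Pl, Ll} and the fact that opposite $k$-parabolics have a common Levi $k$-subgroup.
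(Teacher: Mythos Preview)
Your argument is correct, but you take a different path to convexity than the paper does, and in your equivalence argument you silently use the one nontrivial ingredient that the paper singles out as the crux.

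For convexity, the paper works directly with Serre's criterion \cite[Prop.~3.1]{serre2}: a collection $\Sigma$ of simplices is a convex subcomplex provided that whenever $\sigma_P,\sigma_Q\in\Sigma$ and $P\cap Q\subseteq R$ then $\sigma_R\in\Sigma$. This is verified for $\Delta_k^\frakh$ using the fact that $\Lie(P\cap Q)=\Lie(P)\cap\Lie(Q)$ for parabolic subgroups (a special case of \cite[Cor.~13.21]{borel}). Your route instead passes through a generating tuple $\tuple{x}\in\frakg^m$, identifies $\Delta_k^\frakh$ with the ``limit exists'' subcomplex for $\tuple{x}$, and then invokes a general GIT convexity statement. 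That is a legitimate and conceptually clean reduction, and it has the advantage of making the link to Theorem~\ref{thm:mcninch1} explicit; but note that the GIT convexity fact you cite is itself usually proved via Serre's criterion, so you are not really avoiding it, only relocating it.

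For the $\Delta_k$-cr $\Leftrightarrow$ $G$-cr equivalence your direct argument is fine and is more explicit than the paper's sketch. However, in the forward direction you write $\frakh\subseteq\Lie(P)\cap\Lie(Q)=\Lie(L)$ with $L=P\cap Q$. The equality $\Lie(P)\cap\Lie(Q)=\Lie(P\cap Q)$ is exactly the smoothness-of-intersection fact that the paper flags as the key observation; it is not automatic for arbitrary closed subgroups and deserves a citation (again \cite[Cor.~13.21]{borel}). Once you acknowledge this, your argument and the paper's are essentially the same for the equivalence, with yours spelling out the opposite-parabolic construction via $P_{-\mu}$ more explicitly.
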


\begin{proof}[Sketch Proof]
For the first assertion, the key observation is that for the intersection of any two connected subgroups $P$ and $Q$ of $G$ that are normalized by $T$, we have $\Lie(P\cap Q) = \Lie(P)\cap \Lie(Q)$ \cite[Cor. 13.21]{borel}.
Now the result follows from Serre's criterion \cite[Prop.~3.1]{serre2} for recognising a convex subcomplex of $\Delta_k$: a collection $\Sigma$ of simplices is a convex subcomplex if whenever $P,Q,R$ are $k$-parabolic subgroups of $G$ with $\sigma_P, \sigma_Q\in \Sigma$ and $P\cap Q\subseteq R$, then $\sigma_R\in \Sigma$.

To see that $\frakh$ is $G$-cr over $k$ if and only if $\Delta_k^\frakh$ is $\Delta_k$-cr,
the key again is the smoothness of the intersection of two $k$-parabolic subgroups.
This implies that for two $k$-parabolic subgroups $P$ and $Q$ of $G$, we can detect whether or not they are opposite (and hence correspond to opposite simplices of $\Delta_k$) on the level of their Lie algebras.
\end{proof}

\begin{rem}
 An analogous result holds for subgroups, with a very similar proof \cite{serre2}.
\end{rem}

Since each subalgebra $\frakh$ of $\frakg$ gives rise to a subcomplex $\Delta_k^\frakh$ of $\Delta_k$,
we can use the so-called Centre Conjecture of Tits, which in fact is a theorem for subcomplexes, see \cite{muhlherrtits}
($G$ of classical type or type $G_2$),
\cite{lrc}
($G$ of type $F_4$ or $E_6$) and \cite{rc}
($G$ of type $E_7$ or $E_8$).
We give a version which is suitable for our purposes.

\begin{thm}[Tits' Centre Theorem]\label{thm:TCC}
Let $\Delta_k$ be a thick spherical building, and $\Sigma$ a convex subcomplex of $\Delta_k$.
Then (at least) one of the following holds:
\begin{itemize}
\item[(i)] $\Sigma$ is $\Delta_k$-completely reducible; 
\item[(ii)] there exists a nonempty simplex $\sigma\in \Sigma$ which is fixed by all simplicial automorphisms of $\Delta_k$ that stabilize $\Sigma$.
\end{itemize}
\end{thm}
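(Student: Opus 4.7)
The plan is to pass to the geometric realisation $\overline{\Sigma}\subseteq\overline{\Delta_k}$, which under the angular (spherical) metric is a closed convex subset of the CAT$(1)$ space $\overline{\Delta_k}$. If $\Sigma$ is already $\Delta_k$-completely reducible, case (i) holds and there is nothing to do, so I assume there exists some $\sigma\in\Sigma$ with no opposite in $\Sigma$. Geometrically this translates to the statement that $\overline{\Sigma}$ contains no point at angular distance $\pi$ from the barycenter of $\sigma$; in particular $\overline{\Sigma}$ has circumradius at most $\pi/2$. The strategy is to produce a canonical point $c\in\overline{\Sigma}$, defined purely in terms of the intrinsic metric geometry of $\overline{\Sigma}$, and then to show that $c$ lies in the relative interior of a unique simplex $\sigma_0\in\Sigma$; canonicity immediately forces any simplicial automorphism of $\Delta_k$ stabilising $\Sigma$ to fix $c$, hence to permute the vertices of $\sigma_0$ and stabilise $\sigma_0$ as a simplex.

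When the circumradius of $\overline{\Sigma}$ is \emph{strictly} less than $\pi/2$, standard CAT$(1)$ arguments produce a unique circumcenter $c\in\overline{\Sigma}$, since balls of radius $<\pi/2$ in a CAT$(1)$ space are strictly convex and the intersection of all minimal enclosing balls collapses to a point. This $c$ is characterised intrinsically by $\overline{\Sigma}$, and convexity of $\Sigma$ together with the fact that $c$ lies in the interior of a unique simplex $\sigma_0$ of $\overline{\Delta_k}$ forces $\sigma_0\in\Sigma$, yielding the fixed simplex required in (ii).

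The main obstacle, and the reason the Centre Conjecture is genuinely deep, is the borderline case where the circumradius of $\overline{\Sigma}$ is \emph{exactly} $\pi/2$: then the circumcenter is no longer unique, and a canonical simplex of $\Sigma$ cannot be extracted by purely metric considerations. This case requires delicate arguments sensitive to the combinatorics and root-theoretic data of the building, and this is precisely why the full result has been proved only via type-by-type arguments in the literature: M\"uhlherr--Tits \cite{muhlherrtits} handle classical type and type $G_2$, Leeb and Ramos-Cuevas \cite{lrc} treat types $F_4$ and $E_6$, and Ramos-Cuevas \cite{rc} completes the cases $E_7$ and $E_8$. For the purposes of the present paper I take the theorem in the form stated as a black box, combining these contributions.
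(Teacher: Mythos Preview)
The paper does not prove this theorem at all: it is stated as a black box, with the proof attributed to M\"uhlherr--Tits \cite{muhlherrtits}, Leeb--Ramos-Cuevas \cite{lrc}, and Ramos-Cuevas \cite{rc}, exactly the references you cite. In that sense your proposal matches the paper's treatment --- both ultimately defer to the literature for the actual argument --- and your final paragraph is precisely what the paper does.

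Your additional exposition on the CAT$(1)$ strategy is a helpful gloss, and correctly identifies the dichotomy between the tractable case (circumradius strictly less than $\pi/2$, where the Bruhat--Tits circumcenter argument applies) and the genuinely hard boundary case. One caution, though: your sentence ``in particular $\overline{\Sigma}$ has circumradius at most $\pi/2$'' does not follow from the reasoning you give. Knowing that some simplex $\sigma$ has no opposite only tells you that no point of $\overline{\Sigma}$ lies at distance $\pi$ from the barycentre of $\sigma$; this bounds distances from \emph{one} point by something less than $\pi$, not the circumradius by $\pi/2$. The fact that a convex subcomplex of a spherical building which is not $\Delta_k$-cr has circumradius at most $\pi/2$ is itself a nontrivial result (essentially due to Serre, and closely tied to his characterisation of $\Delta_k$-cr subcomplexes), not a consequence of the single-simplex observation. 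Since you are treating the theorem as a black box anyway this does not affect your conclusion, but if you retain the exposition you should either justify that step or cite it.
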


For a definition of \textit{thick} see \cite[4.2]{abro}. In our case of algebraic groups, $\Delta_k$ is always thick, see the comment after \cite[Thm.\ 3.2.6]{tits}. The typical use of this theorem (as we see in the next proof below) is in guaranteeing the existence of a simplex as in part (ii) when $\Sigma$ is \emph{not} $\Delta_k$-cr;
such a simplex is often referred to as a \emph{centre} of $\Sigma$.
We can now prove the main result of this section. 
Over $\ovl{k}$, this is \cite[Thm.~1(ii)]{mcninch}; see also \cite[Thm.~5.27, Ex.~5.29]{GIT}. 
However, the proofs given there, which use the technology of optimal destabilising subgroups, do not go through over arbitrary $k$.
Instead, we use the Centre Theorem.

\begin{thm}\label{thm:mcninch2}
Suppose $H$ is a $k$-defined subgroup of $G$.
If $H^0$ is $G$-completely reducible over $k$, then so is $\Lie(H)$. 
\end{thm}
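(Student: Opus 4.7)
The plan is to argue by induction on $\dim G$, using Tits' Centre Theorem as the building-theoretic engine and reducing to a proper Levi subgroup via Proposition~\ref{prop:ascent_descent}. First, since $\Lie(H) = \Lie(H^0)$ and $H^0$ is $k$-defined, I may replace $H$ by $H^0$ and assume $H$ is connected; using Proposition~\ref{prop:ascent_descent}(i) and its standard subgroup analogue, I pass to the separable closure and assume throughout that $k = k_s$. The base case $\dim G = 0$ is trivial.

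For the inductive step, I argue by contradiction: suppose $\frakh := \Lie(H)$ is not $G$-completely reducible over $k$. By Proposition~\ref{prop:liebuildingcr} the convex subcomplex $\Delta_k^\frakh$ fails to be $\Delta_k$-cr, so Theorem~\ref{thm:TCC} yields a nonempty simplex $\sigma_P \in \Delta_k^\frakh$ fixed by every simplicial automorphism of $\Delta_k$ that stabilises $\Delta_k^\frakh$; here $P$ is a proper $k$-parabolic of $G$ with $\frakh \subseteq \Lie(P)$. Since $H$ normalises $\frakh$, the group $H(k)$ acts on $\Delta_k$ by conjugation and stabilises $\Delta_k^\frakh$, hence fixes $\sigma_P$; this forces $H(k) \subseteq N_G(P)(k) = P(k)$. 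With $H$ smooth and connected and $k = k_s$, Zariski-density of $H(k)$ in $H$ upgrades this to $H \subseteq P$.

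Now the hypothesis that $H = H^0$ is $G$-cr over $k$ produces a $k$-Levi $L$ of $P$ with $H \subseteq L$, and $\dim L < \dim G$. Setting $S := Z(L)^0$, the torus $S$ centralises both $H$ and $\frakh$ (as $H\subseteq L$, $\frakh \subseteq \Lie(L)$, and $Z(L)$ acts trivially on $L$ and $\Lie(L)$), and $L = C_G(S)$. Proposition~\ref{prop:ascent_descent}(ii) and its subgroup counterpart therefore show that $H$ is $L$-cr over $k$ while $\frakh$ is not. Applying the inductive hypothesis to $L$ and to $H \subseteq L$ yields that $\frakh$ is $L$-cr over $k$, contradicting the previous line and completing the proof. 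The main obstacle is bridging the purely combinatorial Centre Theorem and the $k$-rational structure: one must convert a building-theoretic fixed simplex into a parabolic subgroup containing $H$ as an algebraic group (not merely on $k$-points), and this is exactly what the reduction to $k=k_s$ together with the density argument accomplishes.
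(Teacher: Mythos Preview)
Your proof is correct and follows essentially the same approach as the paper: reduce to $k=k_s$ and $H$ connected, apply Tits' Centre Theorem to $\Delta_k^\frakh$ to find a proper $k$-parabolic $P$ normalised by $H(k)$, use density to get $H\subseteq P$, and then descend to a Levi via Proposition~\ref{prop:ascent_descent}(ii). The only organisational difference is that the paper performs the Levi descent \emph{first} by passing to a minimal Levi $k$-subgroup containing $H$ (so that $H$ becomes $G$-irreducible and the Centre Theorem gives an immediate contradiction), whereas you invoke the Centre Theorem first and then descend one Levi at a time by induction on $\dim G$; these are equivalent ways of packaging the same argument.
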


\begin{proof}
Since $\Lie(H) = \Lie(H^0)$, and the hypothesis is that $H^0$ is $G$-cr, it is no loss to assume that $H$ is connected.
Let $\frakh = \Lie(H)$. 
Using Proposition \ref{prop:ascent_descent}(i) (and its analogue for subgroups \cite[Thm.~1.1]{BMR:sepext}), it is enough to prove the result over the separable closure $k_s$.
These reductions imply that we may assume that $H(k)$ is dense in $H$  \cite[11.2.7]{springer}.
Now suppose $L$ is a minimal Levi $k$-subgroup of $G$ containing $H$.
Since $L = L_\lambda = C_G(\Im(\lambda))$ for some $\lambda \in Y_k(C_G(H))$, 
Proposition \ref{prop:ascent_descent}(ii)
(and its analogue for subgroups \cite[Thm.~1.4]{sphericalcochar})
implies that $H$ and $\frakh$ are $G$-cr if and only if they are $L$-cr, so we may replace $G$ with $L$.
Then $H$ is $G$-cr over $k$ but is not contained in any proper Levi $k$-subgroup of $G$, so $H$ is $G$-ir over $k$.

We now proceed with the proof of the theorem by contradiction.
Suppose that $\frakh$ is not $G$-cr over $k$. 
The subgroup $H(k)$ acts on the building $\Delta_k$ by simplicial automorphisms, 
and the subcomplex $\Delta_k^\frakh$ is stabilized by the action of $H(k)$ since $H$ stabilizes its own Lie algebra.
We are assuming that $\frakh$ is not $G$-cr over $k$, so this subcomplex is not $\Delta_k$-cr, by Proposition \ref{prop:liebuildingcr},
and hence there is a nonempty $H(k)$-fixed simplex $\sigma$ in $\Delta_k^\frakh$ by Theorem \ref{thm:TCC}.
This simplex has the form $\sigma_P$ for some proper $k$-parabolic subgroup $P$ of $G$,
and the fact that $\sigma_P$ is $H(k)$-fixed translates into the fact that $P$ is normalized by $H(k)$.
Since parabolic subgroups are self-normalizing, this shows that $H(k)\subseteq P$, which in turn 
gives $H\subseteq P$ because $H(k)$ is dense in $H$.
This contradicts the conclusion of the first paragraph, that $H$ is $G$-ir over $k$, so we are done.
\end{proof}

\begin{rems}
\label{rems:subgp_to_subalg}
(i). We observe that the converse of Theorem \ref{thm:mcninch2} is false, already in the algebraically closed case.
See the counterexample due to the third author in \cite[\S 1]{mcninch}; see also Example \ref{ex:ssoverkvskbar} below.

(ii). If $k$ is perfect, then Theorem \ref{thm:mcninch2} also holds with the hypothesis that $H$ (instead of $H^0$) is $G$-cr. 
For, as has already been observed, if $k$ is perfect then
$H$ is $G$-cr over $k$ if and only if $H$ is $G$-cr, and the same for $\Lie(H)$. 
Now over $\ovl{k}$, if $H$ is $G$-cr then $H^0$ is $G$-cr too \cite[Thm.~3.10]{BMR}, and the hypotheses of the theorem hold.
\end{rems}

\section{Maps induced by inclusion}

In this section we assume $k$ is algebraically closed.  We need some material on quotients in GIT.  Recall that if $G$ acts on an affine variety $X$ then we can form the quotient variety $X/\!\!/G$.  The coordinate algebra $k[X/\!\!/G]$ is by definition the subring of invariants $k[X]^G$ of the coordinate ring $k[G]$, and the inclusion of $k[X]^G$ in $k[G]$ induces a morphism $\pi_X$ (or $\pi_{X,G}$) from $X$ to $X/\!\!/G$.  If $x\in X$, there is a unique closed $G$-orbit $C(x)$ contained in the closure of the orbit $G\cdot x$.  By the Hilbert-Mumford Theorem, there exists $\lambda\in Y(G)$ such that $\lim_{a\to 0} \lambda(a)\cdot x$ belongs to $C(x)$.  Given $x_1, x_2\in X$, we have $\pi_X(x_1)= \pi_X(x_2)$ if and only if $C(x_1)= C(x_2)$ if and only if $f(x_1)= f(x_2)$ for all $f\in k[X]^G$.  In particular, if $G\cdot x_1$ and $G\cdot x_2$ are closed then $C(x_1)= G\cdot x_1$ and $C(x_2)= G\cdot x_2$, so $\pi_{X}(x_1)= \pi_X(x_2)$ if and only if $x_1$ and $x_2$ lie in the same $G$-orbit.  Hence points of $X/\!\!/G$ correspond to closed $G$-orbits in $X$.  For background on quotient varieties and GIT, see \cite[Ch.\ 3]{newstead} or \cite{BaRi}.  Here is a particular instance of this set-up.  The group $G$ acts on $G^m$ by simultaneous conjugation, so we can form the quotient $G^m/\!\!/G$.  Likewise we can form the quotient $\frakg^m/\!\!/G$.  There is a direct connection with $G$-complete reducibility arising from Theorem~\ref{thm:mcninch1} as follows: points in $\frakg^m/\!\!/G$ correspond to closed $G$-orbits $G\cdot (x_1,\ldots, x_m)$, and $(x_1,\ldots, x_m)\in \frakg^m$ yields a closed orbit if and only if the subalgebra $\frakh$ generated by the $x_i$ is $G$-cr.  Analogous statements hold for $G^m/\!\!/G$.

Now let $H$ be a reductive subgroup of $G$ and let $\frakh= \Lie(H)$.  The inclusion $\iota$ of $H^m$ in $G^m$ induces a map $\Psi:H^m/\!\!/H\to G^m/\!\!/G$.  The third author proved that $\Psi$ is a finite morphism of varieties \cite[Thm.\ 1.1]{martin1}; this has various consequences for the theory of $G$-completely reducible subgroups (see, e.g., \cite[Cor.\ 3.8]{BMR}).  It follows that the image of $\Psi$ is closed.  In particular, $\pi_{G^m}(\iota(H^m))$ is a closed subset of $G^m/\!\!/G$.
Now consider the analogous situation for Lie algebras. The differential $d\iota$ of $\iota$ maps $\frakh^m$ to $\frakg^m$.  Now $d\iota$ gives rise to a map
$$ \psi\colon \frakh^m/\!\!/H\to \frakg^m/\!\!/G $$
mapping $\pi_{\frakh^m, H}(x_1,\ldots, x_m)$ to $\pi_{\frakg^m, G}(d\iota(x_1,\ldots, x_m))$, but we see in Example~\ref{ex:PGL_2.2} below that this map need not be finite.

First we need a preliminary result. 

\begin{prop}
\label{prop:nullcone}
 Let $(x_1,\ldots, x_m)\in \frakg^m$ and let $\frakm$ be the subalgebra spanned by the $x_i$.  Then the following are equivalent:
 \begin{itemize}
  \item[(a)] $\pi_{\frakg^m, G}(x_1,\ldots, x_m)= \pi_{\frakg^m, G}(0,\ldots, 0)$.
  \item[(b)] For every nonconstant homogeneous $f\in k[\frakg^m]^G$, we have $f(x_1,\ldots, x_m)= 0$.
  \item[(c)] There exists $\lambda\in Y(G)$ such that $\lim_{a\to 0} \lambda(a)\cdot (x_1,\ldots, x_m)= (0,\ldots, 0)$.
  \item[(d)] There exists a maximal unipotent subgroup $U$ of $G$ such that $\frakm\subseteq \Lie(U)$.
 \end{itemize}
\end{prop}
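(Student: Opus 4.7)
The plan is to handle $(a)\Leftrightarrow(b)\Leftrightarrow(c)$ together using the standard affine GIT picture recalled just before the statement together with the Hilbert--Mumford theorem, and to deduce $(c)\Leftrightarrow(d)$ directly from Lemma~\ref{lem:liealgebrasofRpars}.

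For $(a)\Leftrightarrow(b)$, I would exploit the fact that $\Gm$ acts on $\frakg^m$ by scalar multiplication and commutes with the linear $G$-action, so $k[\frakg^m]^G$ is a graded subring of $k[\frakg^m]$. Evaluating an invariant $f$ at $(0,\ldots,0)$ picks off its constant term, hence $\pi_{\frakg^m,G}(x_1,\ldots,x_m)=\pi_{\frakg^m,G}(0,\ldots,0)$ is equivalent to every nonconstant homogeneous component of every $f\in k[\frakg^m]^G$ vanishing at $(x_1,\ldots,x_m)$, which is $(b)$.

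For $(b)\Leftrightarrow(c)$ I would note that $(b)$ is the statement that the unique closed orbit $C(x_1,\ldots,x_m)$ in the closure of $G\cdot(x_1,\ldots,x_m)$ equals $\{(0,\ldots,0)\}$: indeed, closed orbits are separated by invariants, and $(0,\ldots,0)$ is $G$-fixed. The Hilbert--Mumford theorem then produces $\lambda\in Y(G)$ with $\lim_{a\to 0}\lambda(a)\cdot(x_1,\ldots,x_m)\in C(x_1,\ldots,x_m)=\{(0,\ldots,0)\}$, giving $(c)$; conversely, if such a $\lambda$ exists then $(0,\ldots,0)$ lies in the orbit closure, so $C(x_1,\ldots,x_m)=\{(0,\ldots,0)\}$.

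The substantive step is $(c)\Leftrightarrow(d)$. From $(c)$, Lemma~\ref{lem:liealgebrasofRpars}(iii) applied coordinatewise places each $x_i$ in $\Lie(R_u(P_\lambda))$, hence so does the Lie subalgebra $\frakm$ they generate; choosing any Borel subgroup $B\subseteq P_\lambda$, the group $R_u(P_\lambda)$ is a normal unipotent subgroup of $B$ and therefore contained in $R_u(B)$, so $U:=R_u(B)$ is a maximal unipotent subgroup of $G$ with $\frakm\subseteq\Lie(U)$. Conversely, given $(d)$, write $U=R_u(B)$ for a Borel $B$, pick a maximal torus $T\subseteq B$, and take $\lambda\in Y(T)$ in the interior of the Weyl chamber attached to $B$ so that $P_\lambda=B$ and $\Lie(R_u(P_\lambda))=\Lie(U)\supseteq\frakm$; then Lemma~\ref{lem:liealgebrasofRpars}(iii) gives $\lim_{a\to 0}\lambda(a)\cdot x_i=0$ for each $i$. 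The only point requiring a little care is choosing $\lambda$ sufficiently regular to force $P_\lambda=B$ (rather than a larger parabolic containing $B$), which is routine Weyl chamber bookkeeping; otherwise each step is a direct application of the cocharacter machinery and Hilbert--Mumford.
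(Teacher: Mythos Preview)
Your proof is correct and follows essentially the same approach as the paper: the equivalence of (a), (b), (c) comes from the GIT description of the quotient map and Hilbert--Mumford (which the paper simply cites as ``the results given at the start of the section''), and the equivalence of (c) with (d) is deduced from Lemma~\ref{lem:liealgebrasofRpars}(iii) together with the fact that a maximal unipotent subgroup of $G$ is the unipotent radical of a Borel. Your write-up just unpacks these steps in more detail than the paper does.
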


\begin{proof}
 The equivalence of (a), (b) and (c) follows from the results given at the start of the section.  The equivalence of (c) with (d) follows from Lemma~\ref{lem:liealgebrasofRpars}(iii), since a  maximal unipotent subgroup of $G$ is the unipotent radical of a Borel subgroup.
\end{proof}

\begin{ex}
\label{ex:PGL_2.2}
 Let $p= 2$ and let $G= \SL_3(k)$.  Let $H= \PGL_2(k)$.
 The adjoint representation of $\SL_2(k)$ on its Lie algebra gives rise to an embedding $i$ of $H$ in $G$; with a suitable choice of basis, this takes the form
 $i\left(\ovl{\twobytwo{a}{b}{c}{d}}\right)= \threebythree{1}{ac}{bd}{0}{a^2}{b^2}{0}{c^2}{d^2}$.
 
 For $a\neq 0$, define $x_1= \ovl{\twobytwo{0}{1}{0}{0}}$ and $x_2(a)= \ovl{\twobytwo{0}{0}{a}{0}}$.  Then each pair $(x_1, x_2(a))$ spans the subalgebra $\frakh$ from Example~\ref{ex:PGL_2}, which is $H$-ir by Example~\ref{ex:PGL_2_ir}.  It follows from Theorem~\ref{thm:mcninch1} that $H\cdot (x_1, x_2(a))$ is closed for each $a$.  It is easily seen that the $(x_1, x_2(a))$ are pairwise non-conjugate under the $H$-action.  Hence the points $\pi_{\frakh^2, H}(x_1, x_2(a))$ are pairwise distinct in $\frakh^2/\!\!/H$.  On the other hand, $di(\frakh)\subseteq \Lie(B)$, where $B$ is the Borel subgroup of upper triangular matrices in $G$, so $\pi_{\frakg^2, G}(x_1, x_2(a))= \pi_{\frakg^2, G}(0, 0)$ for all $a$, by Proposition \ref{prop:nullcone}.  This shows that the fibre $\psi^{-1}(\pi_{\frakg^2, G}(0,0))$ is infinite, so $\psi$ cannot be finite.
\end{ex}

If we put a restriction on $p$ then the situation improves.  Recall that if $V$ is a rational $G$-module then $v\in V$ is said to be \emph{unstable} if 0 belongs to the closure of $G\cdot v$, and \emph{semistable} otherwise (cf.\ \cite[Sec.\ 1.4]{rich}).

\begin{lem}
\label{lem:gdd_nakayama}
 Let $R$ and $S$ be commutative rings graded by $\NN_0$ with $R_0= S_0= k$.  Set $R_+= \sum_{i> 0} R_i$ and $S_+= \sum_{i> 0} S_i$.  Let $\phi\colon R\to S$ be a graded $k$-algebra homomorphism; we regard $S$ as an $R$-module via $\phi$.  Let $I$ be $S\phi(R_+)$ (regarded as an ideal of $S$).  Suppose $S$ is finitely generated as a $k$-algebra and $S_+$ is the radical of $I$.  Then $S$ is a finitely generated $R$-module.
\end{lem}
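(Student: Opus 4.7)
The approach is a graded Nakayama-style argument: show that $\bar S := S/I$ is a finite-dimensional $k$-vector space, and then lift a homogeneous $k$-basis of $\bar S$ to a finite generating set for $S$ over $R$, proving it generates by induction on degree.

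The first step is to extract numerical content from the radical hypothesis. Since $S$ is a finitely generated $k$-algebra, it is Noetherian, so the irrelevant ideal $S_+$ is finitely generated, say by homogeneous elements $t_1,\ldots,t_r$ of positive degree. Each $t_i$ satisfies $t_i^{N_i}\in I$ because $S_+ \subseteq \sqrt I$, so a standard argument produces an integer $N$ with $S_+^N \subseteq I$. In the graded quotient $\bar S$ this reads $\bar S_+^N = 0$. Since $\bar S_+$ is generated over $\bar S_0 = k$ by the images of $t_1,\ldots,t_r$, and products of length $\ge N$ vanish, $\bar S_+$ is spanned over $k$ by finitely many monomials, so $\bar S$ is finite-dimensional over $k$.

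Next, pick a finite homogeneous $k$-basis $\bar s_1,\ldots,\bar s_n$ of $\bar S$ and choose homogeneous lifts $s_i\in S$ of the same degrees. Set $M := \phi(R)s_1 + \cdots + \phi(R)s_n$; this is the $R$-submodule of $S$ generated by the $s_i$, and the goal is to prove $M = S$. The plan is to show $S_d \subseteq M$ by induction on $d\ge 0$. For homogeneous $s\in S_d$, write $\bar s = \sum c_i \bar s_i$ with $c_i\in k$, so $s - \sum c_i s_i \in I \cap S_d$. Expanding this element as $\sum \phi(r_j)u_j$ with $r_j\in R_+$ and $u_j\in S$ homogeneous, grading forces $\deg(u_j) < d$, so by the inductive hypothesis $u_j\in M$; then $\phi(r_j)u_j \in \phi(R)\cdot M = M$, and $s\in M$ follows. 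The base $d=0$ is immediate from $(S/I)_0 = k$.

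I do not expect a serious obstacle: everything is essentially a graded Nakayama lemma dressed up so that the ring $R$ acts through $\phi$, and the hypothesis $S_+ = \sqrt{I}$ is exactly what lets one cut $\bar S$ down to finite dimension. The only point requiring care is keeping the argument homogeneous throughout — choosing the lifts $s_i$ to be homogeneous and extracting homogeneous components when writing elements of $I$ as $\sum \phi(r_j)u_j$ — which is routine because $\phi$ is graded and $I$ is a homogeneous ideal.
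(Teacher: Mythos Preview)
Your proof is correct and follows essentially the same approach as the paper: both first show that $S/I$ is finite-dimensional over $k$ using the radical hypothesis and finite generation of $S$, then lift to conclude $S$ is finite over $R$ via graded Nakayama. The only difference is cosmetic: the paper cites an external reference for the graded Nakayama step, whereas you prove it directly by induction on degree, which makes your argument more self-contained.
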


\begin{proof}
 Choose homogeneous elements $f_1,\ldots, f_r\in S_+$ such that the $f_i$ generate $S$ as a $k$-algebra.  There exist $n_1,\ldots, n_r\in \NN$ such that $f_i^{n_i}\in I$ for each $i$.  The elements of the form $f_1^{a_1}\cdots f_r^{a_r}$, where the $a_i $ are non-negative integers, span $S$ over $k$.  If $a_i\geq n_i$ for some $i$ then $f_1^{a_1}\cdots f_r^{a_r}\in I$.  Hence $S/I$ is spanned over $k$ by the images of the elements of the form $f_1^{a_1}\cdots f_r^{a_r}$ with $0\leq a_i< n_i$.  This shows that $S/I$ is finitely generated as an $R$-module. 
 This shows that $S/I$ is finitely generated as an $R$-module.  It follows from a corollary \cite[Cor.\ 3]{markwig} to the graded Nakayama lemma that $S$ is a finite $R$-module; note that Lemma 1 of \emph{op.\ cit.}\ still holds even when the module $M$ is not finitely generated (see \cite[Ex.~4.6]{eisenbud}). This completes the proof.
\end{proof}

\begin{prop}
\label{prop:finite}
Suppose $p$ is fabulous for $G$.  Then $\psi$ is a finite map.
\end{prop}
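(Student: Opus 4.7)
The plan is to apply the graded-Nakayama style statement of Lemma~\ref{lem:gdd_nakayama} to the ring extension induced by $d\iota$. Write $R=k[\frakg^m]^G$, $S=k[\frakh^m]^H$; both are $\NN_0$-graded with $R_0=S_0=k$ (and $S$ is a finitely generated $k$-algebra by Hilbert's theorem, since $H$ is reductive). Restriction of polynomial functions along $d\iota\colon\frakh^m\to\frakg^m$ gives a graded $k$-algebra homomorphism $\phi\colon R\to S$, and finiteness of $\psi$ is equivalent to $S$ being a finite $R$-module. By Lemma~\ref{lem:gdd_nakayama} it therefore suffices to show that $S_+$ is the radical of the ideal $I:=S\phi(R_+)$, which is the geometric statement that the common zero locus in $\frakh^m$ of $\phi(R_+)$ coincides with the $H$-nullcone $\calN_H\subseteq\frakh^m$.

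The first locus is precisely $d\iota^{-1}(\calN_G)$, where $\calN_G\subseteq\frakg^m$ is the $G$-nullcone, so I must prove $\calN_H=d\iota^{-1}(\calN_G)$. The inclusion $\calN_H\subseteq d\iota^{-1}(\calN_G)$ is routine: any $\mu\in Y(H)$ with $\lim_{a\to 0}\mu(a)\cdot(y_1,\ldots,y_m)=0$ in $\frakh^m$ can be regarded as an element of $Y(G)$ that does the same job in $\frakg^m$. For the reverse direction, take $(y_1,\ldots,y_m)\in\frakh^m$ with $d\iota(y_1,\ldots,y_m)\in\calN_G$, and let $\frakm$ be the subalgebra of $\frakh$ generated by the $y_i$. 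Applying Proposition~\ref{prop:nullcone} inside $\frakg$, there is a maximal unipotent subgroup $U$ of $G$ with $\frakm\subseteq\Lie(U)$, so every element of $\frakm$ is nilpotent as an element of $\frakg$; since the $p$-operation on $\frakh$ is the restriction of the one on $\frakg$, every element of $\frakm$ is also nilpotent in $\frakh$.

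At this point I would invoke the plongeability condition (Definition~\ref{def:fabulous}(c)) in $H$ to produce a parabolic $k$-subgroup $Q$ of $H$ with $\frakm\subseteq\Lie(R_u(Q))$; taking $V$ to be any maximal unipotent subgroup of $H$ containing $R_u(Q)$, Proposition~\ref{prop:nullcone} applied in $\frakh$ yields $(y_1,\ldots,y_m)\in\calN_H$, closing the argument.

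The main obstacle is precisely this last step: the hypothesis supplied is that $p$ is fabulous for $G$, whereas the argument wants condition~(c) of fabulousness inside $H$. One resolves this either by strengthening the hypothesis (requiring $p$ fabulous for $H$ as well, which for the reductive subgroup $H$ of $G$ is the natural companion hypothesis), or by observing that under the standing assumption one can arrange that centralisers of $k$-subgroups of $H$ and $k$-subalgebras of $\frakh$ are smooth -- in which case Proposition~\ref{prop:fab_crit} applied to $H$ furnishes the needed parabolic. Everything else in the proof is bookkeeping: converting finiteness of $\psi$ to a radical-ideal statement, and translating the nullcone description via Proposition~\ref{prop:nullcone}.
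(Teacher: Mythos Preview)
Your argument is essentially identical to the paper's: reduce finiteness to the statement that the $G$-nullcone in $\frakh^m$ coincides with the $H$-nullcone, translate via Proposition~\ref{prop:nullcone}, and conclude with Lemma~\ref{lem:gdd_nakayama}. The step you flag --- passing from ``$\frakm$ consists of nilpotent elements'' to ``$\frakm\subseteq\Lie(U)$ for some maximal unipotent $U$ of $H$'' --- is exactly where the paper writes ``the hypothesis on $p$ implies that $\frakm\subseteq \Lie(U)$ for some maximal unipotent subgroup $U$ of $H$'' and says nothing further. So you have correctly isolated the one non-formal step, and the paper supplies no additional argument for it beyond what you already have. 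One caution about your second proposed resolution: smoothness of centralisers in $G$ does not in general descend to an arbitrary reductive subgroup $H$ (think of $H\cong\PGL_2$ inside a large $\GL_n$ in characteristic~$2$), so Proposition~\ref{prop:fab_crit} cannot be invoked for $H$ on that basis alone; the cleanest reading is that condition~(c) of Definition~\ref{def:fabulous} is being applied inside $H$.
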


\begin{proof}
 Suppose $(x_1,\ldots, x_m)\in \frakh^m$ is unstable for the $G$-action.  Then the subalgebra $\frakm$ generated by the $x_i$ consists of nilpotent elements, by Proposition~\ref{prop:nullcone}.  The hypothesis on $p$ implies that $\frakm\subseteq \Lie(U)$ for some maximal unipotent subgroup $U$ of $H$.  Hence $(x_1,\ldots, x_m)$ is unstable for the $H$-action by Proposition~\ref{prop:nullcone}.  It follows from Proposition~\ref{prop:nullcone} again that
 \begin{equation}
 \label{eqn:zero_fibre}
  \psi^{-1}(\pi_G(0,\ldots, 0))= \{\pi_H(0,\ldots, 0)\}.
 \end{equation}
 Now $k[\frakg^m]$ is a polynomial algebra, so it has a natural grading by $\NN_0$.  Since $G$ acts linearly on $\frakg^m$, the ring of invariants $R:= k[\frakg^m]^G= k[\frakg^m/\!\!/G]$ is $\NN_0$-graded.  Likewise, $S:= k[\frakh^m]^H= k[\frakh^m/\!\!/H]$ is $\NN_0$-graded, and the comorphism $\psi^*\colon R\to S$ is graded because the inclusion of $\frakh^m$ in $\frakg^m$ is linear.  It follows from Eqn.\ (\ref{eqn:zero_fibre}) that $S_+$ is the radical of $S\psi^*(R_+)$, so $S$ is a finite $R$-module by Lemma~\ref{lem:gdd_nakayama}.  Hence $\psi$ is finite.
\end{proof}

\section{$k$-semisimplification}
\label{sec:ss}

First, we recall the notion of $k$-semisimplification for subgroups of $G$ and the main theorem from \cite{BMR:semisimplification}.

\begin{defn}
\label{defn:ss}
 Let $H$ be a subgroup of $G$. We say that a subgroup $H'$ of $G$ is a \emph{$k$-semisimplification of $H$ (for $G$)} if there exist a parabolic $k$-subgroup $P$ of $G$ and a Levi $k$-subgroup $L$ of $P$ such that $H\subseteq P$, $H'= c_L(H)$ and $H'$ is $G$-completely reducible (or equivalently $L$-completely reducible, by \cite[Prop.~3.6]{BMR:semisimplification}) over $k$, where $c_L:P\to L$ is the canonical projection. We say \emph{the pair $(P,L)$ yields $H'$}.
\end{defn}

\begin{rems}
\label{rem:ss}
 (i). Let $H$ be a subgroup of $G$. If $H$ is $G$-cr over $k$ then clearly $H$ is a $k$-semisimplification of itself, yielded by the pair $(G,G)$. 
 
 (ii). Given any subgroup $H$ of $G$,  \cite[Rem.~4.3]{BMR:semisimplification} guarantees the existence of a $k$-semi-simplification of $H$.
\end{rems}

Here is the main result \cite[Thm.~ 4.5]{BMR:semisimplification} from \cite{BMR:semisimplification}, which was proved in the special case $k= \ovl{k}$ in \cite[Prop.~5.14(i)]{GIT}, cf.\ \cite[Prop.~3.3(b)]{serre2}. 
The uniqueness asserted in Theorem~\ref{thm:main} is akin to the theorem of Jordan--H\"older.

\begin{thm}
\label{thm:main}
 Let $H$ be a subgroup of $G$. Then any two $k$-semisimplifications of $H$ are $G(k)$-conjugate.
\end{thm}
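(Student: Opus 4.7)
The plan is to reduce the uniqueness statement to the uniqueness clause in the Rational Hilbert--Mumford Theorem (Theorem~\ref{thm:RHMT}), applied to a suitable generating tuple for $H$ acted on by simultaneous conjugation. First I would pick a tuple $\mathbf{h} = (h_1, \ldots, h_m) \in H^m$ whose entries generate $H$ as an algebraic subgroup of $G$, in the sense that any closed subgroup of $G$ containing $\mathbf{h}$ has closure containing $H$. Then $G(k)$-conjugacy of two such tuples is equivalent to $G(k)$-conjugacy of the subgroups they generate. Some mild rationality reductions (along the lines of those used in the proof of Theorem~\ref{thm:mcninch2}, or by standard density arguments used in \cite{BMR} and \cite{BMR:semisimplification}) will be needed so that $\mathbf{h}$ can be chosen compatibly with the $k$-structure.

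Next, given a $k$-semisimplification $H'$ of $H$ yielded by a pair $(P,L)$, apply Lemma~\ref{lem:P,L is Pl, Ll} to write $(P,L)= (P_\lambda,L_\lambda)$ for some $\lambda\in Y_k(G)$. Since $H\subseteq P_\lambda$, all entries of $\mathbf{h}$ lie in $P_\lambda$, and I would check that $\mathbf{h}':=c_\lambda(\mathbf{h})=\lim_{a\to 0}\lambda(a)\cdot\mathbf{h}$ (componentwise limit) is a generating tuple for $c_\lambda(H)=H'$: this follows because $c_\lambda$ is a homomorphism of algebraic groups. Because $H'$ is $G$-completely reducible over $k$, the subgroup analogue of Theorem~\ref{thm:mcninch1} then tells us that the $G(k)$-orbit of $\mathbf{h}'$ in $G^m$ is cocharacter-closed over $k$.

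The closing move is a direct application of Theorem~\ref{thm:RHMT}. Suppose $H_1'$ and $H_2'$ are two $k$-semisimplifications of $H$, yielded by pairs $(P_i,L_i)=(P_{\lambda_i},L_{\lambda_i})$ with $\lambda_i\in Y_k(G)$, and form $\mathbf{h}_i':=c_{\lambda_i}(\mathbf{h})$ for $i=1,2$. By the previous paragraph, each $G(k)\cdot\mathbf{h}_i'$ is a cocharacter-closed $G(k)$-orbit over $k$, and each is reached from $\mathbf{h}$ by taking a limit along a $k$-defined cocharacter. The uniqueness clause of the Rational Hilbert--Mumford Theorem forces $G(k)\cdot\mathbf{h}_1'=G(k)\cdot\mathbf{h}_2'$, so there exists $g\in G(k)$ with $g\cdot\mathbf{h}_1'=\mathbf{h}_2'$; the generating property then gives $gH_1'g^{-1}=H_2'$.

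The step I expect to require the most care is the first one: setting up a generating tuple that genuinely transfers the subgroup problem to an orbit problem over $k$. Over a non-perfect $k$, $H(k)$ need not be dense in $H$ and the interplay between taking closures, limits along $\lambda$, and $G(k)$-conjugacy is delicate; one must verify both that $c_\lambda(\mathbf{h})$ topologically generates $c_\lambda(H)$ and that $G(k)$-conjugacy of limit tuples translates faithfully back to $G(k)$-conjugacy of semisimplifications. These rationality subtleties are exactly what the machinery of \cite{BMR} and \cite{BMR:semisimplification} is designed to handle, and I would appeal to them rather than re-prove them here.
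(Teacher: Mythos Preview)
Your proposal is sound and mirrors exactly the argument the paper gives for the Lie algebra analogue, Theorem~\ref{thm:liemain}: pick a generating tuple, push it through $c_\lambda$, observe the resulting orbit is cocharacter-closed because the image is $G$-cr over $k$, and apply the uniqueness clause of Theorem~\ref{thm:RHMT}. Each step goes through for subgroups just as for subalgebras: a finite topologically generating tuple $\mathbf{h}$ for $H$ exists; since $c_\lambda$ is a homomorphism of algebraic groups, $c_\lambda(\mathbf{h})$ topologically generates $c_\lambda(H)$; and conjugation by $g\in G(k)$ then carries $H_1'$ to $H_2'$.

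Note, however, that the paper does not itself prove Theorem~\ref{thm:main}: it is quoted from \cite[Thm.~4.5]{BMR:semisimplification}. The paper remarks, just before Theorem~\ref{thm:liemain}, that the proof in \cite{BMR:semisimplification} requires a \emph{descending chain argument}, and that the Lie algebra case is easier because linearity of the adjoint action makes the passage from tuple-conjugacy to subobject-conjugacy immediate. So your route is genuinely different from---and more streamlined than---the one in \cite{BMR:semisimplification}. The trade-off is that you are leaning on the subgroup analogue of Theorem~\ref{thm:mcninch1} (the characterisation of $G$-cr over $k$ via cocharacter-closed orbits of generating tuples, as in \cite{GIT} and \cite{cochar}) as a black box; that result already absorbs most of the rational GIT subtleties you flag in your final paragraph. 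The descending-chain argument of \cite{BMR:semisimplification} is more self-contained and, as a by-product, gives explicit information about which parabolics yield a $k$-semisimplification, but at the cost of a longer proof.
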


We now come to the analogue of Definition \ref{defn:ss} for subalgebras of $\frakg$. 

\begin{defn}
	\label{defn:liess}
	Let $\frakh$ be a Lie subalgebra of $\frakg$. We say that a Lie subalgebra $\frakh'$ of $\frakg$ is a \emph{$k$-semisimplification of $\frakh$ (for $G$)} if there exist a parabolic $k$-subgroup $P$ of $G$ and a Levi $k$-subgroup $L$ of $P$ such that $\frakh \subseteq \Lie(P)$, $\frakh'= c_{\Lie(L)}(\frakh)$ and $\frakh'$ is $G$-completely reducible (or equivalently, by Proposition~\ref{prop:ascent_descent}(ii), $L$-completely reducible) over $k$. We say \emph{the pair $(P,L)$ yields $\frakh'$}.
\end{defn}

\begin{rems}
	\label{rem:liess}
		(i). Let $\frakh$ be a subalgebra of $\frakg$. If $\frakh$ is already $G$-cr over $k$ then clearly $\frakh$ is a $k$-semisimplification of itself, yielded by the pair $(G,G)$. 
		
		(ii). Suppose $(P,L)$ yields a $k$-semisimplification $\frakh'$ of $\frakh$. Let $L_1$ be another Levi $k$-subgroup of $P$. Then $L_1= uLu^{-1}$ for some $u\in R_u(P)(k)$ by Lemma~\ref{lem:parprops}(iii), so consequently $c_{\Lie(L_1)}(\frakh)= u\cdot c_{\Lie(L)}(\frakh)$. Hence $(P,L_1)$ also yields a $k$-semisimplification of $\frakh$. 
		Because of this, when the choice of $L$ doesn't matter we simply say that \emph{$P$ yields a $k$-semisimplification of $\frakh$}.
		
		(iii). It is straightforward to check that if $\phi$ is an automorphism of $G$ (as a $k$-group), $\frakh$ is a subalgebra of $\frakg$ and $(P,L)$ yields a $k$-semisimplification $\frakh'$ of $\frakh$ then $d\phi(\frakh')$ is a $k$-semisimplification of $d\phi(\frakh)$, yielded by $(\phi(P), \phi(L))$.
\end{rems}

The following is immediate from Lemma \ref{lem:P,L is Pl, Ll}.

\begin{lem}
	\label{lem:Liek-cochar}
	Suppose that $\frakh'$ is a $k$-semisimplification of $\frakh$.
	Then there is $\lambda\in Y_k(G)$ such that $\frakh'$ is yielded by the pair $(P_\lambda,L_\lambda)$.
\end{lem}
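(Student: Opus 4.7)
The plan is essentially to unwind the definition of $k$-semisimplification and then quote Lemma \ref{lem:P,L is Pl, Ll}; the author has already signalled that this is immediate, so the task is really just to spell out the two-line chain of reasoning.

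First I would invoke Definition \ref{defn:liess}: since $\frakh'$ is a $k$-semisimplification of $\frakh$, by definition there exist a parabolic $k$-subgroup $P$ of $G$ and a Levi $k$-subgroup $L$ of $P$ such that $\frakh \subseteq \Lie(P)$, $\frakh' = c_{\Lie(L)}(\frakh)$, and $\frakh'$ is $G$-completely reducible over $k$. In other words, the pair $(P, L)$ yields $\frakh'$.

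Next I would apply Lemma \ref{lem:P,L is Pl, Ll}, which guarantees that every such pair $(P, L)$ of a parabolic $k$-subgroup of $G$ together with a Levi $k$-subgroup of it is of the form $(P_\lambda, L_\lambda)$ for some $\lambda \in Y_k(G)$. Choosing such a $\lambda$ for our pair $(P,L)$, we have $(P, L) = (P_\lambda, L_\lambda)$, and hence $(P_\lambda, L_\lambda)$ yields $\frakh'$, as required. There is no real obstacle here; the whole content is the bijection between pairs of $k$-defined parabolic/Levi subgroups and (equivalence classes of) $k$-defined cocharacters supplied by Lemma \ref{lem:P,L is Pl, Ll}.
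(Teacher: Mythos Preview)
Your proposal is correct and follows exactly the approach the paper indicates: the paper states that the lemma is immediate from Lemma~\ref{lem:P,L is Pl, Ll}, and you have simply spelled out the two steps --- unwinding Definition~\ref{defn:liess} to obtain a pair $(P,L)$ yielding $\frakh'$, and then applying Lemma~\ref{lem:P,L is Pl, Ll} to realise $(P,L)$ as $(P_\lambda,L_\lambda)$ for some $\lambda\in Y_k(G)$.
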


As in the group case (Remark \ref{rem:ss}(ii)) we always have the existence of a $k$-semisimplification of an arbitrary subalgebra of $\frakg$, due to the rational Hilbert-Mumford Theorem \ref{thm:RHMT}, as the following remark shows.

\begin{rem}
	\label{rem:lieexistence}
	Suppose $\frakh$ is a subalgebra of $\frakg$.
	Let
	${\mathbf h}= (h_1,\ldots, h_m)\in \frakh^m$ be a generating tuple for $\frakh$. 
	Then 
	 $c_\lambda({\mathbf h})= (c_\lambda(h_1),\ldots, c_\lambda(h_m))$ is a generating tuple for $c_\lambda(\frakh)$, for any $\lambda \in Y_k(G)$, and hence $c_\lambda(\frakh)$ is a $k$-semisimplification of $\frakh$ if and only if $G(k)\cdot c_\lambda({\mathbf h})$ is cocharacter-closed over $k$, by Theorem~\ref{thm:mcninch1}. It follows from Theorem~\ref{thm:RHMT} that $\frakh$ admits at least one $k$-semisimplification: for we can choose $\lambda\in Y_k(G)$ such that $G(k)\cdot c_\lambda({\mathbf h})$ is cocharacter-closed over $k$, so $c_\lambda(\frakh)$ is a $k$-semisimplification of $\frakh$, yielded by $(P_\lambda, L_\lambda)$.
\end{rem}

Here is the analogue of the main result Theorem 4.5 from \cite{BMR:semisimplification} in the Lie algebra setting, which 
can be viewed as a kind of Jordan--H\"older theorem.
Since the adjoint action is $k$-linear, the proof is easier to the one in \cite{BMR:semisimplification}, where a descending chain argument is needed.

\begin{thm}
	\label{thm:liemain}
	Let $\frakh$ be a subalgebra of $\frakg$.
	Then any two $k$-semisimplifications of $\frakh$ are $\Ad(G(k))$-conjugate.
\end{thm}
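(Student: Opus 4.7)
The plan is to reduce to the uniqueness statement in the Rational Hilbert--Mumford Theorem (Theorem~\ref{thm:RHMT}) applied to a fixed generating tuple of $\frakh$, exactly as in Remark~\ref{rem:lieexistence}. The Lie algebra setting makes this cleaner than the subgroup case in \cite{BMR:semisimplification}: because the adjoint action on $\frakg$ is $k$-linear and $\frakh$ is a $k$-vector space (after possibly passing to its $k$-span), a generating tuple plays the role that a finite generating set plays for subgroups, and the limit map $c_\lambda$ applied componentwise sends generating tuples to generating tuples of the image subalgebra.

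Suppose $\frakh'_1$ and $\frakh'_2$ are two $k$-semisimplifications of $\frakh$. First, by Lemma~\ref{lem:Liek-cochar}, I choose $\lambda_1,\lambda_2\in Y_k(G)$ such that $\frakh'_i$ is yielded by $(P_{\lambda_i},L_{\lambda_i})$; explicitly $\frakh'_i = c_{\lambda_i}(\frakh)$. Next, fix a generating tuple $\mathbf{h}=(h_1,\dots,h_m)\in \frakh^m$; then $c_{\lambda_i}(\mathbf{h})=(c_{\lambda_i}(h_1),\dots,c_{\lambda_i}(h_m))$ is a generating tuple for $\frakh'_i$, and the limits $\lim_{a\to 0}\lambda_i(a)\cdot\mathbf{h} = c_{\lambda_i}(\mathbf{h})$ exist in $\frakg^m$ since $\mathbf{h}\in \frakp_{\lambda_i}^m$.

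Since each $\frakh'_i$ is $G$-completely reducible over $k$, Theorem~\ref{thm:mcninch1} (applied in the affine $G$-variety $\frakg^m$ under simultaneous conjugation) tells me that both orbits $G(k)\cdot c_{\lambda_1}(\mathbf{h})$ and $G(k)\cdot c_{\lambda_2}(\mathbf{h})$ are cocharacter-closed over $k$. But each of these orbits is reached from $\mathbf{h}$ via a limit along a $k$-cocharacter, so the uniqueness assertion of Theorem~\ref{thm:RHMT} forces them to coincide as $G(k)$-orbits in $\frakg^m$. Hence there exists $g\in G(k)$ with $g\cdot c_{\lambda_1}(\mathbf{h}) = c_{\lambda_2}(\mathbf{h})$.

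Finally, since $c_{\lambda_i}(\mathbf{h})$ generates $\frakh'_i$ as a Lie algebra, and the $G$-action on $\frakg$ is the adjoint action (so the component-wise $G$-action on tuples is by $\Ad(g)$ on each entry, which is a Lie algebra automorphism of $\frakg$), the equality $g\cdot c_{\lambda_1}(\mathbf{h}) = c_{\lambda_2}(\mathbf{h})$ promotes to $\Ad(g)(\frakh'_1) = \frakh'_2$, as required. There is really no substantial obstacle here: the only thing to verify with any care is that the choice of generating tuple is harmless and that one may assume the yielding pair is of the form $(P_\lambda,L_\lambda)$ for a $k$-cocharacter, which is exactly Lemma~\ref{lem:Liek-cochar}.
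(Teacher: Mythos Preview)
Your proof is correct and follows essentially the same route as the paper's own argument: pick $k$-cocharacters realising the two $k$-semisimplifications via Lemma~\ref{lem:Liek-cochar}, apply Theorem~\ref{thm:mcninch1} to see that the images of a fixed generating tuple have cocharacter-closed $G(k)$-orbits, invoke the uniqueness in Theorem~\ref{thm:RHMT} to get a conjugating element $g\in G(k)$, and finish by linearity of $\Ad(g)$. The only quibble is the parenthetical ``after possibly passing to its $k$-span'': nothing in the argument requires $\frakh$ to be $k$-defined, so this remark is unnecessary (and indeed the paper makes no such reduction).
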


\begin{proof}
 Let $\frakh_1, \frakh_2$ be $k$-semisimplifications of $\frakh$. By Lemma~\ref{lem:Liek-cochar}, there exist $\lambda_1, \lambda_2\in Y_k(G)$ such that $(P_{\lambda_1}, L_{\lambda_1})$ realizes $\frakh_1$ and $(P_{\lambda_2}, L_{\lambda_2})$ realizes $\frakh_2$. Let ${\mathbf x}\in \frakh^m$ be a generating tuple for $\frakh$. Then $c_{\lambda_i}({\mathbf x})$ is a generating tuple for $\frakh_i$ for $i= 1,2$, and each orbit $G(k)\cdot c_{\lambda_i}({\mathbf x})$ is cocharacter-closed over $k$.
 It follows from the uniqueness result in the rational Hilbert-Mumford Theorem \ref{thm:RHMT} that the two orbits $G(k)\cdot c_{\lambda_1}({\mathbf x})$ and $G(k)\cdot c_{\lambda_2}({\mathbf x})$ have to be equal. Thus there exists $g\in G(k)$ such that $g\cdot c_{\lambda_1}({\mathbf x})= c_{\lambda_2}({\mathbf x})$. This means that the spanning set $c_{\lambda_1}({\mathbf x})$ of $\frakh_1$ is $\Ad(G(k))$-conjugate to the one of $\frakh_2$. Since the adjoint action is $k$-linear, $\frakh_1$ and $\frakh_2$ are also $\Ad(G(k))$-conjugate.
\end{proof}

Next we study the connection between the notions of $k$-semisimplifications for subgroups and subalgebras. It turns out that they are compatible in a natural fashion.

\begin{thm}
	\label{thm:ssLievsGp}
	Let $H$ be a subgroup of $G$ and let $H'$ be a $k$-semisimplification of $H$.
	Then 
	$\Lie(H')$ is a $k$-semisimplification of $\Lie (H)$.
\end{thm}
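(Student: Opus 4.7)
The plan is to show that the parabolic-Levi pair $(P,L)$ realising $H'$ as a $k$-semisimplification of $H$ also realises $\Lie(H')$ as a $k$-semisimplification of $\Lie(H)$. By Lemma~\ref{lem:P,L is Pl, Ll}, pick $\lambda \in Y_k(G)$ with $(P,L) = (P_\lambda, L_\lambda)$, so that $H\subseteq P_\lambda$, $H' = c_\lambda(H)$, and $H'$ is $G$-completely reducible over $k$.

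Two of the three conditions in Definition~\ref{defn:liess} are comparatively straightforward. First, $H\subseteq P_\lambda$ immediately yields $\Lie(H)\subseteq \Lie(P_\lambda)=\frakp_\lambda$. Second, to see that $\Lie(H')$ is $G$-cr over $k$, I apply Theorem~\ref{thm:mcninch2} to $(H')^0$. Its hypothesis holds because $H'$ being $G$-cr over $k$ forces $(H')^0$ to be $G$-cr over $k$ (a $k$-rational analogue of \cite[Thm.~3.10]{BMR}, in the spirit of Remarks~\ref{rems:subgp_to_subalg}(ii)), and the conclusion yields that $\Lie(H')=\Lie((H')^0)$ is $G$-cr over $k$.

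The substantive step is establishing the equality $\Lie(H') = c_{\frakl_\lambda}(\Lie(H))$. The group-level projection $c_\lambda\colon P_\lambda \to L_\lambda$, given by $g\mapsto \lim_{a\to 0}\lambda(a)g\lambda(a)^{-1}$, has differential at the identity equal to the Lie-algebra projection $c_\lambda\colon \frakp_\lambda \to \frakl_\lambda$ from Lemma~\ref{lem:liealgebrasofRpars}. Restricting to $H$ gives a surjective morphism $c_\lambda|_H\colon H\to H'$ whose differential is $c_\lambda|_{\Lie(H)}\colon \Lie(H)\to \Lie(H')$, immediately giving $c_{\frakl_\lambda}(\Lie(H))\subseteq \Lie(H')$. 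The reverse inclusion is the main obstacle. In positive characteristic the scheme-theoretic kernel $H\cap R_u(P_\lambda)$ of $c_\lambda|_H$ may fail to be smooth, and for arbitrary $\lambda$ with $H\subseteq P_\lambda$ the inclusion can indeed be strict precisely when $c_\lambda(H)$ fails to be $G$-cr; Frobenius-twisted diagonal embeddings of a Borel of $\SL_2$ into $\SL_2\times\SL_2$ in characteristic~$2$ illustrate the phenomenon. Under the standing assumption that $H'$ is $G$-cr over $k$, I expect the argument to proceed via GIT: pick a generating tuple $\mathbf{x}\in\Lie(H)^m$, so $c_\lambda(\mathbf{x})$ generates $c_{\frakl_\lambda}(\Lie(H))$, and use Theorem~\ref{thm:mcninch1} together with the Rational Hilbert-Mumford Theorem (Theorem~\ref{thm:RHMT}) and the uniqueness of $k$-semisimplifications for Lie subalgebras (Theorem~\ref{thm:liemain}) to identify $c_{\frakl_\lambda}(\Lie(H))$ with $\Lie(H')$ up to $\Ad(G(k))$-conjugacy, forcing the desired equality and completing the proof.
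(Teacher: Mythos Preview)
Your overall strategy is exactly the paper's: choose $\lambda\in Y_k(G)$ with $(P_\lambda,L_\lambda)$ yielding $H'$, invoke Theorem~\ref{thm:mcninch2} to get that $\Lie(H')$ is $G$-cr over $k$, and then argue that the \emph{same} pair $(P_\lambda,L_\lambda)$ yields $\Lie(H')$ as a $k$-semisimplification of $\Lie(H)$.  The paper dispatches the equality $\Lie(H')=c_{\frakl_\lambda}(\Lie(H))$ in one line, observing that $dc_{L_\lambda}=c_{\frakl_\lambda}$ and citing Lemma~\ref{lem:liealgebrasofRpars}; you are more cautious and single out the reverse inclusion as the substantive step, correctly noting that for an arbitrary $\lambda$ with $H\subseteq P_\lambda$ the map $c_\lambda|_H$ can fail to be separable (your Frobenius-twisted example illustrates exactly this).

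The problem is that your proposed repair via GIT is circular.  You want to invoke Theorem~\ref{thm:liemain} to identify $c_{\frakl_\lambda}(\Lie(H))$ with $\Lie(H')$ up to $\Ad(G(k))$-conjugacy, and then combine conjugacy with the inclusion $c_{\frakl_\lambda}(\Lie(H))\subseteq\Lie(H')$ to force equality.  But Theorem~\ref{thm:liemain} compares two $k$-semisimplifications of $\Lie(H)$, and neither candidate is yet known to be one: showing that $c_{\frakl_\lambda}(\Lie(H))$ is $G$-cr over $k$ is not addressed, and showing that $\Lie(H')$ is a $k$-semisimplification of $\Lie(H)$ is precisely the theorem.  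So the uniqueness theorem cannot be fed the inputs it needs, and the argument does not close.

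A smaller issue you flag is real in both your write-up and the paper's: Theorem~\ref{thm:mcninch2} has the hypothesis that $(H')^0$ is $G$-cr over $k$, whereas what is given is that $H'$ is $G$-cr over $k$.  The paper cites Theorem~\ref{thm:mcninch2} without comment; you gesture at a ``$k$-rational analogue'' of \cite[Thm.~3.10]{BMR}, but Remarks~\ref{rems:subgp_to_subalg}(ii) only supplies this passage for perfect $k$.  Over arbitrary $k$ this step needs its own justification, which neither argument provides.
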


\begin{proof}
	By Lemma \ref{lem:P,L is Pl, Ll}, there is a $\lambda \in Y_k(G)$ such that $(P_\lambda,L_\lambda)$ yields $H' = c_\lambda(H)$. 
		It follows from Theorem \ref{thm:mcninch2} that
	$\Lie(H')$ is $G$-cr over $k$. Since the differential of conjugation is the adjoint action, we have $dc_{L_\lambda} = c_{\frakl_\lambda}$. Using Lemma \ref{lem:liealgebrasofRpars}, it follows that $\Lie(H') = c_{\frakl_\lambda}(\Lie(H))$, and so the pair $(P_\lambda,L_\lambda)$ also yields $\Lie(H')$.
\end{proof}

Next we revisit the example of Remark~\ref{rems:subgp_to_subalg}(i)
in the context of Theorems \ref{thm:liemain} and \ref{thm:ssLievsGp}.

\begin{ex}\label{ex:Ben}
Suppose $\char(k) = p >0$. Let
$H$ be a non-trivial connected 
semisimple group and let $\varrho_i : H \to \SL(V_i)$ be representations of $H$ for $i = 1,2$ with $\varrho_1$ semisimple and $\varrho_2$ not semisimple. 
Let $\varrho : H \to G:= \SL(V_1 \oplus V_2)$
be the representation given by 
$h \mapsto \varrho_1 (h) \oplus \varrho_2 (F (h))$, where
$F : H \to H$ is the Frobenius endomorphism of $H$. 
Let $J$ be the image of $H$ under $\varrho$ in $G$.
Since $V_1 \oplus V_2$ is not semisimple as a $J$-module, $J$ is not $G$-cr (cf.~Remark \ref{rem:linear}).
However, $\Lie(J) = \Im\, d\varrho_1 \oplus 0 \subseteq \frakg$ \emph{is} $G$-cr, showing that
the converse of Theorem \ref{thm:mcninch2} fails.

Now let $J'$ be a $k$-semisimplification of $J$. 
It follows from Theorem \ref{thm:ssLievsGp}
that $\Lie(J')$ is a $k$-semisimplification of $\Lie(J)$ and thus, by Remark \ref{rem:liess}(a) and Theorem \ref{thm:liemain}, that 
$\Lie(J')$ and $\Lie(J)$ are $\Ad(G(k))$-conjugate.
\end{ex}

The following is the analogue of \cite[Def.~4.6]{BMR:semisimplification}.

\begin{defn}
	Let $\frakh$ be a subalgebra of $\frakg$.
 We define ${\mathcal D}_k(\frakh)$ to be the set of $\Ad(G(k))$-conjugates of any $k$-semisimplification of $\frakh$ in $\frakg$. This is well-defined by Theorem~\ref{thm:liemain}.
\end{defn}

In the following two examples we show that not every element of ${\mathcal D}_k(\frakh)$ need be a $k$-semisimplification of $\frakh$ and that there is no direct relation between the notions of $k$-semisimplification and $\ovl{k}$-semisimplification of a subalgebra of $\frakg$.

\begin{ex}
	\label{ex:liecomments}
 Let $\frakh$ be a subalgebra of $\frakg$. As noted in Remark \ref{rem:liess}(a), if $\frakh$ is $G$-cr over $k$ then $\frakh$ is a $k$-semisimplification of itself, yielded by the pair $(G,G)$. 
 If $\frakh$ is $G$-ir, then $\frakh$ is not contained in $\Lie(P)$ for any proper parabolic subgroup $P$ of $G$, so $\frakh$ is the {\bf only} $k$-semisimplification of itself.  
\end{ex}

\begin{ex}
\label{ex:ssoverkvskbar}
There are many examples in the literature of the following: a reductive group $G$ over an imperfect field $k$, and a subgroup
$H$ of $G$ such that $H$ is $G$-cr over $k$ but not $G$-cr, or $H$ is $G$-cr but not $G$-cr over $k$; see \cite[Thm.~1.3]{BannuscherLitterickUchiyama}, for example.
Not all of these instances give rise to similar ones on the level of Lie algebras, even when the subgroup $H$ is connected, 
because of problems like that in Example \ref{ex:Ben}.
However, one of the most basic families of examples does work, as we now describe.

Let $k$ be an imperfect field of characteristic $p$, and let $a\in k\setminus k^p$.
Let $t$ be a $p^{\rm th}$ root of $a$ in $\ovl{k}$; then the extension $k'=k(t)$ is purely inseparable over $k$ of degree $p$.
Given a $k'$-group $H'$, we may form the \emph{Weil restriction} $\mathrm{R}_{k'/k}(H')$, which is 
a $k$-group, see \cite[A.5]{CGP} for example.
(It is easiest to describe Weil restriction functorially: if we view $H'$ as a functor from $k'$-algebras to groups,
then $\mathrm{R}_{k'/k}(H')$ is the corresponding functor from $k$-algebras to groups with $\mathrm{R}_{k'/k}(H')(A):= H'(A\otimes_k k')$ for each $k$-algebra $A$.)
Now let $H' = \Gm$ be the multiplicative group over $k'$.
Then $H := \mathrm{R}_{k'/k}(\Gm)$ is a $p$-dimensional abelian $k$-group with a $(p-1)$-dimensional
unipotent radical, but no connected normal unipotent $k$-subgroup (it is a basic example of a so-called \emph{pseudo-reductive group}, see \cite[Ex.~1.1.3]{CGP}).
The natural action of $H'$ on $k'$ by multiplication Weil restricts to give a $p$-dimensional representation of $H$ over $k$
which is irreducible: in terms of coordinates, this arises by writing down a $k$-basis for $k'$ and
interpreting the action through that basis, see \cite[\S 5.2]{BS}.
However, after base changing to $\ovl{k}$, this module becomes indecomposable and not irreducible \cite[Rem.~4.7(i)]{BS}.
This means that $H$ is $\GL_p$-cr over $k$ but not $\GL_p$-cr.  This example is due to McNinch (see \cite[Ex.\ 5.11]{BMR}).

If we turn our attention to the Lie algebras, we may identify the Lie algebra $\Lie(H')$ with the additive group over $k'$,
and we may view the multiplicative group $H'$ as an open subset.
Doing this is compatible with the action of $H'$ and $\Lie(H')$ on $k'$ by multiplication.
Therefore, after Weil restricting, we may identify the Lie algebra of $H$ with a $p$-dimensional abelian Lie algebra over $k$ 
containing $H$ as an open subset, with the same compatibility between the corresponding representations (see \cite[A.7.6]{CGP} for more details on
the Lie algebra of a Weil restriction).
Therefore, in this case, we also have that $\Lie(H)$ is $\GL_p$-cr over $k$ but not $\GL_p$-cr.  Note that $C_{\GL_p}(\Lie(H))$ is smooth.
\end{ex}

The following theorem shows that under a mild restriction on $k$, the process of $k$-semi-simplification behaves well under passing to ideals.

\begin{thm}
	\label{thm:lienormal}
	Suppose $p$ is fabulous for $G$.  Let $\frakh$ be a subalgebra of $\frakg$ and let $\frakm$ be an ideal of $\frakh$. Then:
	\begin{itemize}
		\item[(a)] If $\frakh$ is $G$-completely reducible over $k$ then so is $\frakm$.
		\item[(b)] Every parabolic subgroup $P$ of $G$ which yields a $k$-semisimplification of $\frakh$ also yields one for $\frakm$. In particular, there exist $k$-semisimplifications $\frakh'$ of $\frakh$ and $\frakm'$ of $\frakm$ such that $\frakm'$ is an ideal in $\frakh'$.
	\end{itemize}
\end{thm}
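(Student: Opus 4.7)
The plan is to prove (a) first by induction on $\dim G$, using Tits' Centre Theorem on the spherical building, and then derive (b) as an easy consequence by applying (a) inside the Levi subgroup $L_\lambda$ to $\frakh'$ and the ideal $\frakm' := c_\lambda(\frakm)$.

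For part (a), I would proceed by induction on $\dim G$, the base case being trivial. Consider the convex subcomplex $\Delta_k^\frakm$ of the spherical building $\Delta_k$ from Proposition~\ref{prop:liebuildingcr}. By that proposition, $\frakm$ is $G$-cr over $k$ if and only if $\Delta_k^\frakm$ is $\Delta_k$-cr, so I may assume the latter fails. Since $p$ is fabulous, the scheme-theoretic normaliser $\calN_G(\frakm)$ is smooth; hence the $k$-group $N_G(\frakm)$ has Lie algebra $\frakn_\frakg(\frakm)$, and $\frakh \subseteq \frakn_\frakg(\frakm) = \Lie(N_G(\frakm))$ because $\frakm$ is an ideal of $\frakh$. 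The action of $N_G(\frakm)(k)$ on $\Delta_k$ preserves $\Delta_k^\frakm$ (if $\frakm \subseteq \Lie(Q)$ and $g \in N_G(\frakm)$, then $\frakm = g\cdot\frakm \subseteq \Lie(gQg^{-1})$), so by Theorem~\ref{thm:TCC} there exists a nonempty simplex $\sigma_P \in \Delta_k^\frakm$, with $P$ a proper parabolic $k$-subgroup containing $\frakm$ in its Lie algebra, fixed by $N_G(\frakm)(k)$. Self-normalisation of parabolics yields $N_G(\frakm)(k) \subseteq P(k)$.

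The next step is to promote this containment to $N_G(\frakm) \subseteq P$ (as algebraic groups), which then gives $\frakh \subseteq \Lie(N_G(\frakm)) \subseteq \Lie(P)$. Then $G$-complete reducibility of $\frakh$ over $k$ furnishes a Levi $k$-subgroup $L = L_\mu$ of $P$ with $\frakh \subseteq \Lie(L)$, and in particular $\frakm \subseteq \Lie(L)$. Since $\Im(\mu)$ centralises $\Lie(L)$, and hence both $\frakh$ and $\frakm$, Proposition~\ref{prop:ascent_descent}(ii) implies that $\frakh$ is $L$-cr over $k$ and that $\frakm$ is $G$-cr over $k$ if and only if it is $L$-cr over $k$. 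As $P$ is proper, $\dim L < \dim G$, and applying the inductive hypothesis to the pair $(\frakh, \frakm)$ inside $L$ completes the argument. The main obstacle is the density step just described: passing from $N_G(\frakm)(k) \subseteq P(k)$ to $N_G(\frakm) \subseteq P$. I would handle this by first reducing via Proposition~\ref{prop:ascent_descent}(i) (and its analogue for ideals, after arranging that $\frakh$ and $\frakm$ may be taken $k$-defined) to the case $k = k_s$; then the smooth $k$-group $N_G(\frakm)$ has a Zariski-dense set of $k$-points, and the implication is automatic.

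For part (b), choose $\lambda \in Y_k(G)$ such that $(P_\lambda, L_\lambda)$ yields the $k$-semisimplification $\frakh' = c_\lambda(\frakh)$ (using Lemma~\ref{lem:Liek-cochar}). Since $\frakm \subseteq \frakh \subseteq \Lie(P_\lambda)$, the image $\frakm' := c_\lambda(\frakm)$ is defined, and because $c_\lambda$ arises as the limit $\lim_{a \to 0}\Ad(\lambda(a))$ of Lie algebra automorphisms it is a Lie algebra homomorphism on $\Lie(P_\lambda)$; hence $\frakm'$ is an ideal of $\frakh'$. Applying part (a) to $\frakh'$ (which is $G$-cr over $k$ by definition of $k$-semisimplification) and its ideal $\frakm'$ shows that $\frakm'$ is $G$-cr over $k$, so $(P_\lambda, L_\lambda)$ yields $\frakm'$ as a $k$-semisimplification of $\frakm$, with $\frakm'$ an ideal of $\frakh'$ as required.
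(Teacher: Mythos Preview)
Your proposal is correct and follows essentially the same approach as the paper. The only organizational difference is that the paper front-loads the Levi reduction---first passing to $k=k_s$ and to a minimal Levi $k$-subgroup containing $\frakh$ so that $\frakh$ becomes $G$-irreducible over $k$, after which the Centre Theorem yields $\frakh\subseteq\Lie(P)$ for a proper $P$ and gives an immediate contradiction---whereas you run the same argument as an explicit induction on $\dim G$ by finding $P$ via the Centre Theorem, descending into a Levi $L$ of $P$ containing $\frakh$, and recursing. Part (b) is identical in both.
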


\begin{proof}
	For (a), by the same reductions as at the start of the proof of Theorem \ref{thm:mcninch2}, we may reduce to the case that $k=k_s$ and $\frakh$ is $G$-ir over $k$.
	We again proceed from this point using a contradiction argument invoking Tits' Centre Theorem \ref{thm:TCC}.
	So suppose $\frakm$ is not $G$-cr over $k$. Then the subcomplex $\Delta_k^\frakm$ of the building $\Delta_k$ is not $\Delta_k$-cr, by Proposition \ref{prop:liebuildingcr}, and hence there is a proper $k$-parabolic subgroup $P$ of $G$ such that $\sigma_P\in \Delta_k^\frakm$ is
	fixed by all simplicial automorphisms of $\Delta_k$ stabilizing $\Delta_k^\frakm$, by Theorem \ref{thm:TCC}.
	Since $N_G(\frakm)(k)$ clearly stabilizes $\Delta_k^\frakm$, we can conclude that $N_G(\frakm)(k)\subseteq P$.
	Now, because $p$ is fabulous, $N_G(\frakm)$ is smooth, and because $k=k_s$ this means that the $k$-points of $N_G(\frakm)$ are dense 
	by \cite[AG.13.3 Cor.]{borel}. 
	Therefore, we may conclude that $N_G(\frakm)\subseteq P$.
	Finally, this gives 
	$$
	\frakm \subseteq \frakh \subseteq \frakn_\frakg(\frakm)= \Lie(N_G(\frakm))
	\subseteq \Lie(P),
	$$ 
	where the
	equality in the middle follows from smoothness of $N_G(\frakm)$ again.
	This gives the required contradiction, as we had reduced to the case that $\frakh$ is $G$-ir over $k$.
		
	For (b), pick any $\lambda\in Y_k(G)$ such that $(P_\lambda, L_\lambda)$ yields a $k$-semisimplification $\frakh':= c_\lambda(\frakh)$ of $\frakh$. Then $c_\lambda(\frakh)$ is $G$-cr over $k$ and, as $c_\lambda$ is a Lie algebra homomorphism, $c_\lambda(\frakm)$ is an ideal in $ c_\lambda(\frakh)$. Now $c_\lambda(\frakh)$ and $c_\lambda(\frakm)$ satisfy the hypotheses of the theorem,
	so $c_\lambda(\frakm)$ is $G$-cr over $k$ by (a). Hence $(P_\lambda, L_\lambda)$ yields a semisimplification $\frakm':= c_\lambda(\frakm)$ of $\frakm$ as well, and $\frakm'$ is an ideal in $\frakh'$.
\end{proof}

\begin{rem}
Both parts of Theorem~\ref{thm:lienormal} are false without the assumption on $p$: see Example~\ref{ex:PGL_2_ir}.
\end{rem}

\section{$G$-toral and solvable subalgebras}
\label{sec:toral_solvable}

We assume in this section that $k$ is algebraically closed.  We study $G$-complete reducibility properties of solvable and $G$-toral subalgebras $\frakh$ of $\frakg$ (see Definition~\ref{defn:toral} for the latter).

\begin{defn}
 We call a Lie subalgebra $\frakh$ of $\frakg$ \emph{Jordan-closed} if  for every $x\in \frakh$, the semisimple part $x_s$ and nilpotent part $x_n$ of $x$ both belong to $\frakh$.\footnote{For Lie subalgebras of $\mathfrak {gl}_n$ over a field of characteristic
 	0, the notion of ``Jordan-closed'' already appears in \cite[Ch.~VII, \S 5]{Bou05} under the term ``scindable''.}  We define the \emph{Jordan closure} $\frakh^J$ of $\frakh$ to be the smallest Jordan-closed Lie subalgebra of $\frakg$ that contains $\frakh$.
\end{defn}

\begin{rems}
\label{rems:closure_invt}
 (a). It is clear that $\frakh^J$ is well-defined.  Here is an explicit construction.  We define an increasing chain of subalgebras $\frakh_i$ of $\frakg$ as follows.  Set $\frakh_0= \frakh$.  Given $\frakh_i$, let $\frakh_{i+1}$ be the subalgebra generated by the elements of the form $x_s$ and $x_n$ for $x\in \frakh_i$.  For dimension reasons, the chain becomes stationary and we have $\frakh_n= \frakh^J$ for $n$ sufficiently large.
 
 (b). If $\frakh$ is algebraic then $\frakh$ is Jordan-closed.  In particular, if $P$ is a parabolic subgroup of $G$ and $L$ is a Levi subgroup of $P$ then $\Lie(P)$ and $\Lie(L)$ are Jordan-closed.  It follows easily that for any subalgebra $\frakm$ of $\frakg$, $\frakm$ is $G$-cr (resp., $G$-ir, resp., $G$-ind) if and only if $\frakm^J$ is.  Also, if $\char(k)= 0$, $\frakh$ is a subalgebra of $\frakg$ and $\frakh$ is semisimple then $\frakh$ is algebraic (see \cite[Lem.\ 3.2]{rich}), so $\frakh$ is Jordan-closed.
\end{rems}

\begin{rem}
\label{rem:Jordan_funct}
 Let $\frakh$ be a subalgebra of $\frakg$ and let $f\colon G\to M$ be a homomorphism of connected reductive groups.  We have $df(x)_s= df(x_s)$ and $df(x)_n= df(x_n)$ for any $x\in \frakh$, so $df(\frakh)$ is Jordan-closed if $\frakh$ is. The converse obviously holds if $f$ is an embedding, for then $df$ is injective.
\end{rem}

\begin{defn}
\label{defn:toral}
 A subalgebra $\frakh$ of $\frakg$ is \emph{$G$-toral} if $\frakh\subseteq \Lie(S)$ for some torus $S$ of $G$.
\end{defn}

\begin{rem}
\label{rem:toral}
 Recall that a Lie algebra $\frakh$ is said to be \emph{toral} if every element of $\frakh$ is $\ad_\frakh$-semisimple; in this case, $\frakh$ is abelian \cite[Lem.\ 8.1]{Hum72}.  Clearly, if $\frakh$ is $G$-toral then $\frakh$ is toral, but the converse is false: e.g., take $\frakh$ to be a nonzero abelian subalgebra consisting of nilpotent elements.
\end{rem}

The following is the counterpart of 
\cite[Lem.~11.24]{Jantzen} for subalgebras  of $\frakg$.

\begin{lem}
\label{lem:toral_Gcr}
 Let $\frakh$ be a $G$-toral subalgebra of $\frakg$.  Then $\frakh$ is $G$-completely reducible.
\end{lem}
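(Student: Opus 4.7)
The plan is to extend the given torus to a maximal torus of $G$ and then reduce the statement, via Proposition~\ref{prop:ascent_descent}(ii), to the trivial observation that a torus has no proper parabolic subgroups.

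First, since $\frakh$ is $G$-toral, there is a torus $S$ of $G$ with $\frakh\subseteq \Lie(S)$. Any torus of $G$ is contained in some maximal torus, so without loss of generality I would replace $S$ by a maximal torus $T$ of $G$ containing $S$; then $\frakh\subseteq \Lie(T)$. (Here $k=\ovl{k}$, so $T$ is automatically $k$-defined.)

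Next, since $T$ is commutative, $\Ad(t)(x)=x$ for all $t\in T$ and all $x\in \Lie(T)$, and in particular $T\subseteq C_G(\frakh)$. Because $G$ is connected reductive and $T$ is a maximal torus, $C_G(T)=T$. Hence Proposition~\ref{prop:ascent_descent}(ii), applied with the torus $T\subseteq C_G(\frakh)$ and $L=C_G(T)=T$, gives that $\frakh$ is $G$-completely reducible over $k$ if and only if $\frakh$ is $T$-completely reducible over $k$.

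Finally, since $T$ is a torus, its only parabolic subgroup is $T$ itself (with trivial unipotent radical), and $T$ is its own Levi. Thus any subalgebra of $\Lie(T)$, and in particular $\frakh$, is vacuously $T$-completely reducible. There is no genuine obstacle: the only point requiring a moment of verification is that the adjoint action of $T$ on $\Lie(T)$ is trivial, which is needed to invoke Proposition~\ref{prop:ascent_descent}(ii).
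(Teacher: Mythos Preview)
Your proof is correct and takes essentially the same approach as the paper: both invoke Proposition~\ref{prop:ascent_descent}(ii) with a torus inside $C_G(\frakh)$ to reduce to showing $\frakh$ is $L$-completely reducible for a suitable Levi $L$ in which the claim is obvious. The only cosmetic difference is that you pass to a maximal torus $T$ and take $L=C_G(T)=T$, whereas the paper stays with $S$ and takes $L=C_G(S)$, noting that $S$ is central in $C_G(S)$ so $\Lie(S)$ lies in every $\Lie(P)$ and $\Lie(L)$; your extra step of enlarging $S$ to $T$ makes the endgame marginally cleaner but is not essential.
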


\begin{proof}
 Choose a torus $S$ such that $\frakh\subseteq \Lie(S)$.  By Proposition~\ref{prop:ascent_descent}(ii) it suffices to prove that $\frakh$ is $C_G(S)$-cr.  But this is clear because $S$ is central in $C_G(S)$, so $\Lie(S)$ is contained in $\Lie(P)$ and $\Lie(L)$ for every parabolic subgroup $P$ and every Levi subgroup $L$ of $C_G(S)$.
\end{proof}

\begin{rem}
\label{rem:toral_normaliser}
Let $\frakm$ be a $G$-toral subalgebra of $\frakg$: say, $\frakm\subseteq \Lie(S)$, where $S$ is a torus of $G$.  Then $N_G(\frakm)$ is reductive.  To see this, note that $N_G(\frakm)$ contains $T$, where $T$ is a maximal torus of $G$ containing $S$, so $N_G(\frakm)^0$ is generated by $T$ and by certain root groups $U_\alpha$ with respect to $T$.  But if $\alpha$ is a root, $g\in U_\alpha$ and $x\in \Lie(S)$ then $g\cdot x- x\in \Lie(U_\alpha)$, so $U_\alpha$ normalises $\frakm$ if and only if $U_\alpha$ centralises $\frakm$ if and only if $d\alpha$ annihilates $\frakm$.  We deduce that $U_\alpha\subseteq N_G(\frakm)$ if and only if $U_{-\alpha}\subseteq N_G(\frakm)$, and reductivity of $N_G(\frakm)$ follows.  This argument also shows that $N_G(\frakm)^0= C_G(\frakm)^0$.

Suppose further that $p$ is fabulous for $G$.  Then centralisers and normalisers are smooth, so
\begin{equation}
\label{eqn:toral_centraliser}
 \frakm\subseteq \frakn_\frakg(\frakm)= \frakc_\frakg(\frakm)= \Lie(C_G(\frakm)^0).
\end{equation}
Moreover, let $K= C_G(\frakm)^0$, let $Z= C_G(K)^0$ and let $L= C_G(Z)$. As $p$ is fabulous for $G$, $Z$ is smooth.  We have $Z\subseteq Z(K)^0$: for if $g\in C_G(K)$ then $g$ centralises $\frakm$ since $\frakm\subseteq \Lie(K)$ by (\ref{eqn:toral_centraliser}), so $C_G(K)^0$ is a connected subgroup of $C_G(\frakm)$.  This implies that $Z$ is a torus, as $K$ is reductive.  Hence $L$ is a Levi subgroup of $G$.  We have $\frakm\subseteq \Lie(Z)$ by smoothness of $Z$ since $K$ centralises $\frakm$.  By construction, $K\subseteq L$ and $\frakn_\frakg(\frakm)= \Lie(K)\subseteq \Lie(L)$.  Note that if $\frakm\not\subseteq \frakz(\frakg)$ then $L$ is proper.
\end{rem}

\begin{lem}
\label{lem:semisimple_toral}
 Let $\frakh$ be a subalgebra of $\frakg$ such that every element of $\frakh$ is semisimple.  Then $\frakh$ is $G$-toral.
\end{lem}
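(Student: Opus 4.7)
The plan has two parts: first, show $\frakh$ must be abelian; then, inductively conjugate a basis of $\frakh$ into the Lie algebra of a common maximal torus of $G$.

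For abelianness, take any $x \in \frakh$. Since $x$ is semisimple in $\frakg$, the endomorphism $\ad_\frakg(x)$ of $\frakg$ is semisimple, and its restriction to the $\ad_\frakg(x)$-stable subspace $\frakh$ is $\ad_\frakh(x)$. Hence every element of $\frakh$ is $\ad_\frakh$-semisimple, i.e., $\frakh$ is toral in the sense of Remark~\ref{rem:toral}, and so $\frakh$ is abelian.

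Next, pick a basis $x_1,\ldots,x_n$ of $\frakh$; these are pairwise commuting semisimple elements of $\frakg$. By induction on $n$ I claim that, after a $G(k)$-conjugation, $x_1,\ldots,x_n$ all lie in $\Lie(T)$ for a common maximal torus $T$ of $G$. The base case $n=1$ is the standard fact that every semisimple element of $\frakg$ lies in the Lie algebra of some maximal torus. For the inductive step, assume after conjugation that $x_1,\ldots,x_{n-1}\in\Lie(T)$, and set $M := C_G(x_1,\ldots,x_{n-1})^0$. The key ingredient, valid in arbitrary characteristic, is that the connected centraliser of a semisimple element of the Lie algebra of a connected reductive group is itself smooth, connected, and reductive, with Lie algebra equal to the corresponding Lie-algebra centraliser; iterating this fact through the chain $G \supseteq C_G(x_1)^0 \supseteq C_G(x_1,x_2)^0 \supseteq \cdots$ shows that $M$ is smooth, connected, and reductive, with $\Lie(M) = \frakc_\frakg(x_1,\ldots,x_{n-1})$. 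Because $\Ad(T)$ acts trivially on $\Lie(T)$, $T$ centralises each $x_i$; hence $T\subseteq M$, and $T$ is a maximal torus of $M$.

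By abelianness, $x_n \in \frakc_\frakg(x_1,\ldots,x_{n-1}) = \Lie(M)$, and since Jordan decomposition is preserved under inclusion into $\frakg$ from the Lie algebra of the reductive subgroup $M$, the element $x_n$ is semisimple in $\Lie(M)$. Therefore $x_n \in \Lie(T')$ for some maximal torus $T'$ of $M$. Because $T$ and $T'$ are both maximal tori of the connected reductive group $M$, they are $M(k)$-conjugate, say $T' = gTg^{-1}$ with $g\in M(k)$. Then $g^{-1}\cdot x_n \in \Lie(T)$, while $g^{-1}$ fixes $x_1,\ldots,x_{n-1}$ under $\Ad$ because $g\in M$ centralises them. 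Replacing $\frakh$ by its $\Ad(g^{-1})$-conjugate places all basis elements in $\Lie(T)$, and so $\frakh$ is $G$-toral. The only technical subtlety is the iterated smoothness/reductivity of the centralisers, but this follows directly from the standard fact recalled above, so no serious obstacle arises.
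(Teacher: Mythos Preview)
Your proof is correct and rests on the same key fact the paper uses --- that for a semisimple element $x\in\frakg$, the centraliser $C_G(x)^0$ is smooth (so $\Lie(C_G(x)^0)=\frakc_\frakg(x)$) and reductive --- but the induction is organised differently. The paper inducts on $\dim(G)$: if $\frakh\subseteq\frakz(\frakg)$ one is done immediately, and otherwise one picks a single $x\in\frakh$ with $C_G(x)^0$ a proper reductive subgroup, notes that the abelian $\frakh$ sits inside $\Lie(C_G(x)^0)$, and applies the induction hypothesis in this smaller group. You instead induct on $\dim(\frakh)$, iterating the centraliser construction through a basis and then conjugating by an element of $M(k)$ to bring $x_n$ into the same maximal torus as $x_1,\dots,x_{n-1}$. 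Your version is essentially the group-theoretic analogue of the familiar ``simultaneous diagonalisation of commuting semisimple matrices'' argument; the paper's version avoids the explicit conjugation step and the need to verify that the iterated centraliser $C_G(x_1,\dots,x_{n-1})^0$ has the expected Lie algebra (which you handle correctly, but which is a small extra wrinkle). Either route gives the result with the same input; the paper's is marginally shorter, yours is a touch more constructive in that it exhibits a single maximal torus containing a conjugate of $\frakh$.
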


\begin{proof}
 Note that $\frakh$ is abelian by Remark~\ref{rem:toral}.
 We use induction on $\dim(G)$.  The result holds trivially if $\dim(G)= 0$, so let $G$ be arbitrary.  If $\frakh\subseteq \frakz(\frakg)$ then $\frakh\subseteq \Lie(T)$ for any maximal torus $T$ of $G$, so we are done.  Otherwise there exists $x\in \frakh$ such that $C_G(x)^0$ is a proper reductive subgroup of $G$.  Now $C_G(x)^0$ is smooth by \cite[9.1\ Prop.]{borel} since $x$ is semisimple, so $\frakh\subseteq \Lie(C_G(x)^0)$.  By our induction hypothesis, $\frakh$ is $C_G(x)^0$-toral, so $\frakh$ is $G$-toral.
\end{proof}

\begin{lem}
\label{lem:toral_augmentation}
 Let $\frakh$ be a $G$-toral subalgebra of $\frakg$ and let $x\in \frakg$ such that $x$ is semisimple.  Suppose that $x$ centralises $\frakh$, or that $\char(k)= 0$ and $x$ normalises $\frakh$.  Then $k\cdot x+ \frakh$ is $G$-toral.
\end{lem}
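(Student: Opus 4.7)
The plan is to reduce everything to showing that $\frakh' := kx + \frakh$ is a Lie subalgebra whose elements are all semisimple; once that is established, Lemma~\ref{lem:semisimple_toral} will immediately give that $\frakh'$ is $G$-toral. I would split the proof into the centralising case, which I treat directly, and the normalising case in characteristic $0$, which I reduce to the centralising case.

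First, I would check that $\frakh'$ is a Lie subalgebra. Since $\frakh$ is $G$-toral it is abelian (Remark~\ref{rem:toral}), so $[\frakh,\frakh]=0$; in either hypothesis $[x,\frakh]\subseteq \frakh$, and clearly $[x,x]=0$, so $\frakh'$ is closed under the bracket. Next, in the centralising case, I would pick a torus $S$ of $G$ with $\frakh\subseteq\Lie(S)$ and embed $G\hookrightarrow\GL_n$; since Jordan decomposition in $\Lie(G)$ agrees with that in $\frakgl_n$ \cite[Cor.~9.1]{borel}, an element $cx+h\in\frakh'$ (with $c\in k$, $h\in\frakh$) is a sum of two commuting semisimple elements of $\frakgl_n$, hence simultaneously diagonalisable, hence semisimple in $\frakgl_n$, and therefore semisimple in $\frakg$. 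So by Lemma~\ref{lem:semisimple_toral} the subalgebra $\frakh'$ is $G$-toral.

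For the normalising case, suppose $\char(k)=0$. My reduction would be: Remark~\ref{rem:toral_normaliser} (which for the equality $N_G(\frakh)^0 = C_G(\frakh)^0$ is a purely group-scheme computation using root groups and does not use smoothness) gives $N_G(\frakh)^0 = C_G(\frakh)^0$. Since we are in characteristic $0$, everything in sight is smooth (Remarks~\ref{rems:fabulous}(ii)), so
\[ \frakn_\frakg(\frakh) = \Lie(N_G(\frakh)) = \Lie(N_G(\frakh)^0) = \Lie(C_G(\frakh)^0) = \frakc_\frakg(\frakh). \]
Hence $x$ normalising $\frakh$ implies $x$ centralises $\frakh$, and the centralising case applies.

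The only substantive obstacle is the need for smoothness in the normaliser-to-centraliser passage: without it the equality $N_G(\frakh)^0 = C_G(\frakh)^0$ at group-scheme level does not yield $\frakn_\frakg(\frakh) = \frakc_\frakg(\frakh)$, which is precisely why the normalising case is stated only in characteristic $0$. The other ingredient --- that commuting semisimple elements of $\frakg$ sum to a semisimple element --- is handled by a single faithful embedding into $\frakgl_n$, and presents no difficulty.
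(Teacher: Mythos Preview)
Your proof is correct. For the centralising case you follow essentially the same argument as the paper: embed $G$ in $\GL_n$, observe that $k\cdot x\cup\frakh$ consists of pairwise commuting semisimple matrices, hence simultaneously diagonalisable, so every element of $k\cdot x+\frakh$ is semisimple, and conclude via Lemma~\ref{lem:semisimple_toral}.

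For the normalising case in characteristic~$0$, however, you take a genuinely different route. The paper argues directly and elementarily: since $x$ is semisimple it acts semisimply on $\frakh$, and for any nonzero eigenvector $y\in\frakh$ of $\ad(x)$ with eigenvalue $a$ one computes (working in $M_n(k)$ after a faithful embedding) that $\ad(x)(y^t)=t a\, y^t$ for every $t\in\NN$; since $y$ is a nonzero semisimple matrix, $y^t\neq 0$ for all $t$, and because $\ad(x)$ has only finitely many eigenvalues this forces $ta=0$ for almost all $t$, hence $a=0$ as $\char(k)=0$. Thus $x$ centralises $\frakh$. Your argument instead invokes the structural equality $N_G(\frakh)^0=C_G(\frakh)^0$ from Remark~\ref{rem:toral_normaliser} and then uses smoothness of normalisers and centralisers in characteristic~$0$ to deduce $\frakn_\frakg(\frakh)=\Lie(N_G(\frakh)^0)=\Lie(C_G(\frakh)^0)=\frakc_\frakg(\frakh)$. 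Both routes reach the same reduction to the centralising case. The paper's computation is self-contained and pinpoints where characteristic~$0$ enters (the implication $ta=0\ \forall t\Rightarrow a=0$); your approach is more conceptual, explains the result as a consequence of the group-theoretic structure already recorded in Remark~\ref{rem:toral_normaliser}, and makes transparent that the characteristic hypothesis is needed precisely for the smoothness passage from groups to Lie algebras.
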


\begin{proof}
 Choose an embedding of $G$ in $\GL_n$ for some $n\in \NN$.  If $x$ centralises $\frakh$ then $k\cdot x\cup \frakh$ consists of pairwise commuting semisimple matrices.  Hence by a standard theorem from linear algebra, the matrices in $k\cdot x\cup \frakh$ are simultaneously diagonalisable in $M_n(k)$.  This implies that every element of $k\cdot x+ \frakh$ is semisimple, so $k\cdot x+ \frakh$ is $G$-toral by Lemma~\ref{lem:semisimple_toral}.
 
 Now suppose $\char(k)= 0$ and $x$ normalises $\frakh$.  Since $x$ is semisimple, $x$ acts semisimply on $\frakh$.  Let $y\in \frakh$ be any nonzero eigenvector of $\ad(x)$, with eigenvalue $a$.  A simple calculation shows that $\ad(x)(y^t)= tay^t$ for any $t\in \NN$, where $y^t$ denotes the usual matrix power of $y$ in $M_n(k)= \Lie(\GL_n)$.   
 But $y^t\neq 0$ for any $t\in \NN$ as $y$ is semisimple and nonzero, which forces $ta= 0$ for all but finitely many $t$.  This implies that $a= 0$ since we are in characteristic 0.  We deduce that $x$ centralises $\frakh$, and the result follows from the previous paragraph.
\end{proof}

\begin{lem}
\label{lem:abelian}
 Suppose $p$ is fabulous for $G$.  Let $\frakh$ be an abelian $G$-completely reducible subalgebra of $\frakg$.  Then $\frakh$ is $G$-toral.
\end{lem}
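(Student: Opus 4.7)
The plan is to replace $\frakh$ by its Jordan closure, split it cleanly into semisimple and nilpotent parts (using that abelianness makes Jordan decompositions additive), and then annihilate the nilpotent part by combining the ideal-preservation result Theorem~\ref{thm:lienormal}(a) with Example~\ref{ex:nilpt}.

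First I would pass from $\frakh$ to its Jordan closure $\frakh^J$, constructed as in Remarks~\ref{rems:closure_invt}(a). Since the Jordan components $x_s, x_n$ of $x\in\frakg$ are polynomials in $x$ (relative to any faithful embedding $G\hookrightarrow\GL_n$), they commute with everything that $x$ commutes with. An induction along the chain $\frakh=\frakh_0\subseteq\frakh_1\subseteq\cdots$ then shows that $\frakh^J$ is again abelian, and by Remarks~\ref{rems:closure_invt}(b) it is still $G$-cr. So I may assume that $\frakh$ itself is abelian, Jordan-closed, and $G$-cr. Abelianness guarantees that, for $x, y \in \frakh$, the identities $(x+y)_s = x_s + y_s$ and $(x+y)_n = x_n + y_n$ hold; combined with Jordan-closedness this yields a direct sum of subalgebras
\[
\frakh \;=\; \fraks \oplus \frakn,
\]
where $\fraks$ and $\frakn$ are the sets of semisimple and nilpotent elements of $\frakh$ respectively.

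The key step is now immediate. Since $\frakh$ is abelian, every subspace of $\frakh$ is an ideal; in particular $\frakn$ is an ideal of $\frakh$. Because $p$ is fabulous and $\frakh$ is $G$-cr, Theorem~\ref{thm:lienormal}(a) implies that $\frakn$ is also $G$-cr. But $\frakn$ consists of nilpotent elements, so Example~\ref{ex:nilpt} (which itself crucially uses the fabulous hypothesis) forces $\frakn = 0$. Hence $\frakh = \fraks$ consists entirely of semisimple elements, and Lemma~\ref{lem:semisimple_toral} produces a torus $S$ of $G$ with $\frakh\subseteq\Lie(S)$, as required.

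The only non-formal step is the first one, namely the inductive verification that the Jordan closure of an abelian subalgebra remains abelian. Once that is in place the argument is a short chain of results already established in the paper, and the ``fabulous'' hypothesis is doing essentially all of the real work, first to invoke Theorem~\ref{thm:lienormal}(a) and second to rule out nontrivial nilpotent $G$-cr subalgebras via Example~\ref{ex:nilpt}.
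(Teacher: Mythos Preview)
Your proof is correct and follows essentially the same strategy as the paper's: pass to the Jordan closure (which remains abelian and $G$-cr), then use Theorem~\ref{thm:lienormal}(a) together with Example~\ref{ex:nilpt} to rule out nonzero nilpotent elements, and finish with Lemma~\ref{lem:semisimple_toral}. The only cosmetic differences are that the paper justifies abelianness of $\frakh^J$ via smoothness of centralisers (so that $\frakc_\frakg(x)=\Lie(C_G(x))$ is algebraic and hence Jordan-closed) rather than the ``polynomial in $x$'' argument, and it kills the nilpotent part by picking a single nonzero nilpotent $y\in\frakh$ and applying Theorem~\ref{thm:lienormal}(a) to the one-dimensional ideal $k\cdot y$, rather than first forming the full decomposition $\frakh=\fraks\oplus\frakn$.
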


\begin{proof}
 Let $x\in \frakh$.  Then $\frakc_\frakg(x)= \Lie(C_G(x))$, so if $y\in \frakc_\frakg(x)$ then $y_s\in \frakc_\frakg(x)$ and $y_n\in \frakc_\frakg(x)$, so $y_s$ and $y_n$ centralise $x$.  By a similar argument, $y_s$ and $y_n$ centralise $x_s$ and $x_n$.  It follows from the construction described in Remark~\ref{rems:closure_invt}(a) that $\frakh^J$ is abelian.  Now $\frakh^J$ is $G$-cr by Remark~\ref{rems:closure_invt}(b), and it is enough to prove that $\frakh^J$ is $G$-toral.  Hence we can assume without loss that $\frakh$ is Jordan-closed.
 
 By Lemma~\ref{lem:semisimple_toral}, it suffices to show that $\frakh$ consists of semisimple elements.  Suppose not.  Then there exists $0\neq y\in \frakh$ such that $y$ is nilpotent.  The 1-dimensional subspace $\frakm$ spanned by $y$ is an ideal of $\frakh$, so $\frakm$ is $G$-cr by Theorem~\ref{thm:lienormal}.  But this is impossible by Example~\ref{ex:nilpt}.  This completes the proof.
\end{proof}

\begin{lem}
\label{lem:solvable}
 Suppose $p$ is fabulous for $G$.  Let $\frakh$ be a solvable $G$-completely reducible subalgebra of $\frakg$.  Then $\frakh$ is $G$-toral.
\end{lem}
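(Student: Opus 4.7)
We argue by induction on $\dim(G) + \dim(\frakh)$. If $\frakh$ is abelian, Lemma~\ref{lem:abelian} applies directly, so we may assume $\frakh$ is not abelian. Then the derived ideal $\frakh' := [\frakh, \frakh]$ is a proper non-zero solvable ideal of $\frakh$, which is $G$-cr over $k$ by Theorem~\ref{thm:lienormal}(a); since $\dim(\frakh') < \dim(\frakh)$, the inductive hypothesis gives that $\frakh'$ is $G$-toral.

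We now apply Remark~\ref{rem:toral_normaliser} to the $G$-toral ideal $\frakh'$. This produces a connected reductive subgroup $K := C_G(\frakh')^0 = N_G(\frakh')^0$ with Lie algebra $\Lie(K) = \frakc_\frakg(\frakh') = \frakn_\frakg(\frakh')$, a torus $Z := C_G(K)^0$ with $\frakh' \subseteq \Lie(Z)$, and a Levi subgroup $L := C_G(Z)$ of $G$ containing $K$ with $\frakh \subseteq \frakn_\frakg(\frakh') \subseteq \Lie(L)$; moreover, $L$ is proper in $G$ precisely when $\frakh' \not\subseteq \frakz(\frakg)$. Since $Z$ centralises $K$, it also centralises $\Lie(K) \supseteq \frakh$, so $Z \subseteq C_G(\frakh)$ and Proposition~\ref{prop:ascent_descent}(ii) gives that $\frakh$ is $L$-cr over $k$.

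We then split into two cases. If $L$ is a proper Levi subgroup of $G$, then $\dim(L) + \dim(\frakh) < \dim(G) + \dim(\frakh)$ and the inductive hypothesis applied to $\frakh$ in $L$ (for which $p$ is again fabulous --- see below) shows $\frakh$ is $L$-toral, so $G$-toral. If instead $L = G$, then $\frakh' \subseteq \frakz(\frakg) = \Lie(Z(G)^0)$ (using Remark~\ref{rems:fabulous}(iv)), where $Z(G)^0$ is a torus. Letting $\pi \colon G \to \bar G := G/Z(G)^0$ be the quotient map, $d\pi(\frakh) \subseteq \Lie(\bar G)$ is abelian because $d\pi(\frakh') = 0$. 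Since $Z(G)^0$ is central in $G$ it lies in every parabolic $k$-subgroup of $G$, so $\pi$ gives a bijective correspondence between parabolic (and Levi) $k$-subgroups of $G$ and those of $\bar G$, compatible with Lie algebras by smoothness of $\pi$; hence $d\pi(\frakh)$ is $\bar G$-cr over $k$. Lemma~\ref{lem:abelian} applied to $\bar G$ then gives $d\pi(\frakh) \subseteq \Lie(\bar T)$ for some torus $\bar T$ of $\bar G$. The preimage $\pi^{-1}(\bar T)$ is connected (as both $\bar T$ and $\ker(\pi) = Z(G)^0$ are), and a commutator-in-the-kernel argument (the commutator of $\pi^{-1}(\bar T)$ lies in the finite group $Z(G)^0 \cap [G,G]$) shows it is abelian; it then follows that $\pi^{-1}(\bar T)$ is a torus, so $\frakh \subseteq \Lie(\pi^{-1}(\bar T))$ is $G$-toral.

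We expect the main obstacle to be verifying that the hypothesis ``$p$ is fabulous for $G$'' descends to the Levi subgroup $L$ in the first case and to the semisimple quotient $\bar G$ in the second. Part~(a) of Definition~\ref{def:fabulous} descends in both settings by Lemma~\ref{lem:pretty_good_descent}, and part~(c) then follows from Proposition~\ref{prop:fab_crit}; the smoothness of normalisers of subalgebras (part~(b)) for $L$ and $\bar G$ requires independent verification by comparing normalisers in $G$ with those in the subgroup or quotient.
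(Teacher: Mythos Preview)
Your argument is correct and follows the same overall strategy as the paper: induct on $\dim(G)+\dim(\frakh)$, dispose of the abelian case via Lemma~\ref{lem:abelian}, show the derived ideal is $G$-toral, and then use Remark~\ref{rem:toral_normaliser} to pass to a Levi subgroup and apply induction.

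The one place you diverge is in handling the possibility $L=G$, i.e.\ $[\frakh,\frakh]\subseteq\frakz(\frakg)$. The paper avoids this case altogether with a one-line observation you missed: since $[\frakh,\frakh]\subseteq[\frakg,\frakg]=\Lie([G,G])$ and $[G,G]$ is semisimple, Remarks~\ref{rems:fabulous}(iv) gives $\frakz(\frakg)\cap\Lie([G,G])=0$, so a nonzero $[\frakh,\frakh]$ can never lie in $\frakz(\frakg)$ and the Levi $L$ is automatically proper. (The paper actually applies Remark~\ref{rem:toral_normaliser} to the last nonzero term $\frakh_1$ of the derived series rather than to $[\frakh,\frakh]$ itself, but either choice works for the same reason.) Your quotient argument through $G/Z(G)^0$ is sound, but it is extra work for a case that does not arise, and it forces you to check descent of ``fabulous'' to the quotient in addition to the Levi.

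On your final concern: the descent of fabulous to a Levi $L=C_G(S)$ is needed in the paper's proof as well, though it is left implicit there. Parts (a) and (c) of Definition~\ref{def:fabulous} descend by Lemma~\ref{lem:pretty_good_descent}(a) and Proposition~\ref{prop:fab_crit}, and for part (b) one observes that for a subalgebra $\frakm\subseteq\Lie(L)$ the torus $S$ centralises $\frakm$, so $N_L(\frakm)=N_G(\frakm)\cap C_G(S)=C_{N_G(\frakm)}(S)$, which is smooth as the centraliser of a torus in the smooth group $N_G(\frakm)$.
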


\begin{proof}
 We use induction on $\dim(G)+ \dim(\frakh)$.  The ideal $[\frakh, \frakh]$ is $G$-cr by Theorem~\ref{thm:lienormal} and $[\frakh, \frakh]\subseteq [\frakg, \frakg]= \Lie([G,G])$.  Now $[\frakh, \frakh]$ is a proper subalgebra of $\frakh$, so $[\frakh, \frakh]$ is $G$-toral by our induction hypothesis.  If $[\frakh, \frakh]= 0$ then $\frakh$ is abelian, so $\frakh$ is $G$-toral by Lemma~\ref{lem:abelian}.  Otherwise let $\frakh_1$ be the last nonzero term of the derived series of $\frakh$.  Then $\frakh_1$ is $G$-cr by Theorem~\ref{thm:lienormal}(a), so $\frakh_1$ is $G$-toral by Lemma~\ref{lem:abelian} as $\frakh_1$ is abelian.  Now $\frakh_1\subseteq [\frakh, \frakh]$ is not contained in $\frakz(\frakg)$ as $[G, G]$ is semisimple (see Remarks~\ref{rems:fabulous}(iv)), so by Remark~\ref{rem:toral_normaliser}, $\frakh\subseteq N_\frakg(\frakh_1)\subseteq \Lie(L)$ for some proper Levi subgroup $L$ of $G$.  Then $\frakh$ is $L$-cr by Proposition~\ref{prop:ascent_descent}(ii) and $\dim(L)< \dim(G)$, so $\frakh$ is $L$-toral by our induction hypothesis.  Hence $\frakh$ is $G$-toral, as required.
\end{proof}

\noindent We can now give a classification result for maximal solvable subalgebras of $\frakg$ when $p$ is fabulous for $G$: compare \cite[Thm.\ D(b)]{HeSt}.

\begin{prop}
\label{prop:solvable_Borel}
 Suppose $p$ is fabulous for $G$.  Let $\frakh$ be a solvable subalgebra of $\frakg$.  Then $\frakh\subseteq \frakh^J\subseteq \Lie(B)$ for some Borel subgroup $B$ of $G$.  In particular, a maximal solvable subalgebra of $\frakg$ is the Lie algebra of some Borel subgroup.
\end{prop}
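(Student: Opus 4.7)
The plan is to reduce to a Levi subgroup of a parabolic that yields a $k$-semisimplification of $\frakh$, and to combine Lemma~\ref{lem:solvable} (applied in $G$) with Lemma~\ref{lem:semisimple_toral} (applied in the Levi). By Remark~\ref{rem:lieexistence} fix a pair $(P,L)$ yielding a $k$-semisimplification $\frakh' := c_{\Lie(L)}(\frakh)$; so $\frakh \subseteq \Lie(P)$ and $\frakh'$ is $G$-completely reducible. Since $c_{\Lie(L)}$ is a Lie algebra homomorphism and $\frakh$ is solvable, $\frakh'$ is solvable.

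Since $p$ is fabulous for $G$, Lemma~\ref{lem:solvable} applies and tells us $\frakh'$ is $G$-toral. In particular $\frakh'$ is abelian and consists of semisimple elements of $\frakg$, and these remain semisimple viewed inside the algebraic subalgebra $\Lie(L)$. Lemma~\ref{lem:semisimple_toral}, applied inside the connected reductive group $L$ (note this lemma does not need a fabulous hypothesis — it uses only smoothness of centralisers of semisimple elements, which holds in arbitrary characteristic), then shows $\frakh'$ is $L$-toral. Hence $\frakh' \subseteq \Lie(T_L)$ for some maximal torus $T_L$ of $L$, and in particular $\frakh' \subseteq \Lie(B_L)$ for any Borel subgroup $B_L$ of $L$ containing $T_L$.

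Now set $B := B_L R_u(P)$; this is a solvable closed subgroup of $P$ of the correct dimension, hence a Borel subgroup of $G$, and $\Lie(B) = \Lie(B_L)\oplus \Lie(R_u(P))$. For any $x\in \frakh\subseteq \Lie(P) = \Lie(L)\oplus \Lie(R_u(P))$, the canonical decomposition $x = c_{\Lie(L)}(x) + (x - c_{\Lie(L)}(x))$ has first summand in $\frakh' \subseteq \Lie(B_L)$ and second summand in $\Lie(R_u(P))$, so $x\in \Lie(B)$. Thus $\frakh\subseteq \Lie(B)$. Since $B$ is an algebraic subgroup of $G$, $\Lie(B)$ is Jordan-closed by Remark~\ref{rems:closure_invt}(b), and minimality of the Jordan closure gives $\frakh^J\subseteq \Lie(B)$. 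The final clause is immediate: $\Lie(B)$ is itself solvable, so if $\frakh$ is maximal solvable then $\frakh = \Lie(B)$.

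The subtlest point is moving from the statement \emph{$\frakh'$ is $G$-toral} to \emph{$\frakh'$ lies in a torus of $L$}: $G$-torality gives $\frakh' \subseteq \Lie(T)$ for some maximal torus $T$ of $G$, but such a $T$ need not be contained in $L$. I avoid having to verify that the fabulous hypothesis descends to $L$ (and with it a reworking of Lemma~\ref{lem:solvable} inside $L$) by instead invoking Lemma~\ref{lem:semisimple_toral} inside $L$, which requires no hypothesis on $\char(k)$.
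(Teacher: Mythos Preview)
Your proof is correct and follows essentially the same route as the paper: pass to a semisimplification $\frakh' = c_{\Lie(L)}(\frakh)$ via a pair $(P,L)$, show $\frakh'$ sits inside the Lie algebra of a torus of $L$, and then take $B = B_L R_u(P)$. The one point where you diverge is how you obtain $L$-torality of $\frakh'$: the paper passes to $L$ first (noting $\frakh'$ is $L$-cr by Proposition~\ref{prop:ascent_descent}(ii)) and then invokes Lemma~\ref{lem:solvable} inside $L$, which tacitly uses that the fabulous hypothesis descends to Levi subgroups; you instead apply Lemma~\ref{lem:solvable} in $G$ to get $G$-torality (hence semisimplicity of every element of $\frakh'$), and then apply Lemma~\ref{lem:semisimple_toral} inside $L$, which needs no hypothesis on $p$. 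Your route is arguably cleaner on exactly the point you flagged, since it sidesteps any need to check that conditions (a)--(c) of Definition~\ref{def:fabulous} pass to $L$.
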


\begin{proof}
 Since $\Lie(B)$ is Jordan-closed for any Borel subgroup $B$ of $G$ (Remarks~\ref{rems:closure_invt}(b)), it suffices to prove that $\frakh\subseteq \Lie(B)$ for some Borel subgroup $B$.  Let $\lambda\in Y(G)$ such that $(P_\lambda, L_\lambda)$ yields a $k$-semisimplification of $\frakh$.  Then $\fraks:= c_\lambda(\frakh)\subseteq \frakl_\lambda$ is solvable and $G$-cr and $\frakh\subseteq \fraks+ \Lie(R_u(P_\lambda))$.  Now $\fraks$ is $L_\lambda$-cr by Proposition~\ref{prop:ascent_descent}(ii), so $\fraks$ is $L_\lambda$-toral by Lemma~\ref{lem:solvable}: say, $\fraks\subseteq \Lie(S)$ for some torus $S$ of $L_\lambda$.  It follows that $\frakh\subseteq \Lie(S)+ \Lie(R_u(P_\lambda))= \Lie(SR_u(P_\lambda))$.  But $SR_u(P_\lambda)$ is contained in a Borel subgroup of $G$, so the first assertion follows.  The second assertion is immediate.
\end{proof}

\begin{cor}
\label{cor:solv_decompn}
 Suppose $p$ is fabulous for $G$.  Let $\frakh$ be a Jordan-closed solvable subalgebra of $\frakg$ and let $\frakn$ be the set of nilpotent elements of $\frakh$.  Let $\lambda\in Y(G)$ such that $(P_\lambda, L_\lambda)$ yields a semisimplification of $\frakh$.  Then:
 \begin{itemize}
  \item[(a)] $\frakn= \frakh\cap \Lie(R_u(P_\lambda))$.  In particular, $\frakn$ is an ideal of $\frakh$.
  \item[(b)] There is a $G$-toral subalgebra $\fraks$ of $\frakh$ such that $\frakh= \fraks\oplus \frakn$.
 \end{itemize} 
\end{cor}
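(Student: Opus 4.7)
The plan for part (a) is to establish both inclusions of $\frakn = \frakh \cap \Lie(R_u(P_\lambda))$. One direction is immediate since $R_u(P_\lambda)$ is unipotent, so every element of $\Lie(R_u(P_\lambda))$ is nilpotent and the intersection lies in $\frakn$. For the reverse, let $x \in \frakn$. Every $\lambda(a)\cdot x$ is a conjugate of $x$, hence nilpotent, and the limit $c_\lambda(x) = \lim_{a\to 0}\lambda(a)\cdot x$ is therefore nilpotent because the nullcone of $\frakg$ is closed. On the other hand, $c_\lambda(x)$ lies in $\fraks := c_\lambda(\frakh)$, which is solvable and $L_\lambda$-completely reducible by hypothesis, hence $L_\lambda$-toral by Lemma~\ref{lem:solvable}, so every element of $\fraks$ is semisimple. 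Thus $c_\lambda(x)$ is both nilpotent and semisimple, forcing $c_\lambda(x) = 0$ and $x \in \ker c_\lambda = \Lie(R_u(P_\lambda))$. That $\frakn$ is an ideal of $\frakh$ then follows because $\Lie(R_u(P_\lambda))$ is an ideal of $\Lie(P_\lambda) \supseteq \frakh$.

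For part (b), part (a) yields a short exact sequence of Lie algebras
\[
0 \longrightarrow \frakn \longrightarrow \frakh \xrightarrow{c_\lambda} \fraks \longrightarrow 0
\]
in which $\fraks$ is abelian and toral; in particular $[\frakh,\frakh] \subseteq \ker c_\lambda = \frakn$. My plan is to lift a basis $\bar s_1,\ldots,\bar s_r$ of $\fraks$ to pairwise commuting semisimple elements $s_1,\ldots,s_r$ of $\frakh$; the span $\fraks' := \langle s_1,\ldots,s_r\rangle$ will then be the desired complement. The fundamental lifting device is Jordan-closure: given any lift $s \in \frakh$ of $\bar s_i$, the Jordan decomposition $s = s_s + s_n$ satisfies $s_s, s_n \in \frakh$, and the nilpotent part $s_n \in \frakn$ has $c_\lambda(s_n) = 0$ (since $\fraks$ contains no nonzero nilpotent element), so $s_s$ is a semisimple lift. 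This handles $i = 1$.

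For the inductive step I assume $s_1,\ldots,s_{i-1}$ are semisimple, pairwise commuting lifts. Since each $s_j$ is $G$-semisimple, $\ad(s_j)$ acts semisimply on $\frakg$ via the weight decomposition of a torus containing $s_j$, and hence also on the invariant subspace $\frakh$; the commuting semisimple operators $\ad(s_1),\ldots,\ad(s_{i-1})$ decompose $\frakh$ into joint weight spaces $\frakh = \bigoplus_\chi \frakh_\chi$ with $\chi \in k^{i-1}$. The crucial observation is that $\ad(s_j)(\frakh) \subseteq [\frakh,\frakh] \subseteq \frakn$, so whenever some $\chi_j \neq 0$ we obtain $\chi_j\frakh_\chi = \ad(s_j)(\frakh_\chi) \subseteq \frakn$, which forces $\frakh_\chi \subseteq \frakn$. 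Consequently $\frakh = \frakh_0 + \frakn$, where $\frakh_0 := \frakc_{\frakh}(s_1,\ldots,s_{i-1})$, and $c_\lambda$ maps $\frakh_0$ onto $\fraks$. Because $p$ is fabulous, $C_G(s_1,\ldots,s_{i-1})$ is smooth, so $\frakh_0 = \frakh \cap \Lie(C_G(s_1,\ldots,s_{i-1}))$ is an intersection of Jordan-closed subalgebras and therefore itself Jordan-closed. Applying the lifting trick inside $\frakh_0$ then produces a semisimple $s_i \in \frakh_0$ lifting $\bar s_i$, and $s_i$ automatically commutes with $s_1,\ldots,s_{i-1}$.

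After $r$ steps, $\fraks'$ is an abelian subalgebra of $\frakh$ consisting of pairwise commuting semisimple elements; as $k$ is algebraically closed, these are simultaneously diagonalisable in any faithful representation of $G$, so every element of $\fraks'$ is semisimple, and $\fraks'$ is $G$-toral by Lemma~\ref{lem:semisimple_toral}. The restriction $c_\lambda|_{\fraks'}: \fraks' \to \fraks$ is surjective by construction and injective because $\fraks' \cap \frakn = 0$, so by dimension $\frakh = \fraks' \oplus \frakn$. The main obstacle is the inductive step: a naive lift need not commute with the previously chosen $s_j$, and the argument hinges on showing that the centraliser $\frakh_0$ still surjects onto $\fraks$; this is exactly what the inclusion $\frakh_\chi \subseteq \frakn$ for nonzero $\chi$ delivers, with the fabulousness of $p$ ensuring the smoothness needed to keep $\frakh_0$ Jordan-closed throughout.
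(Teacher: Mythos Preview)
Your proof is correct and uses essentially the same ideas as the paper's. Part~(a) is identical: $c_\lambda(\frakh)$ is solvable and $G$-cr, hence $G$-toral by Lemma~\ref{lem:solvable}, so it contains no nonzero nilpotents and $c_\lambda$ kills $\frakn$.

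For part~(b) the only difference is packaging. The paper takes a \emph{maximal} $G$-toral subalgebra $\fraks\subseteq\frakh$ and argues by contradiction that $c_\lambda(\fraks)=c_\lambda(\frakh)$: if not, the weight decomposition for $\fraks$ shows $\frakc_\frakh(\fraks)$ still surjects onto $c_\lambda(\frakh)$ (nonzero weight spaces lie in $\frakn$), this centraliser is Jordan-closed by smoothness of $C_G(\fraks)$, so one extracts a semisimple element outside $\fraks$ and enlarges $\fraks$ via Lemma~\ref{lem:toral_augmentation}. You instead run the same mechanism as an explicit induction, lifting one basis vector at a time and invoking simultaneous diagonalisability at the end in place of repeated appeals to Lemma~\ref{lem:toral_augmentation}. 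The three key ingredients --- nonzero weight spaces contained in $\frakn$ so the centraliser surjects, Jordan-closure of the centraliser via fabulousness of $p$, and passing to semisimple parts --- are exactly the same in both arguments.
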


\begin{proof}
 (a). Since $c_\lambda(\frakh)$ is solvable and $G$-cr, $c_\lambda(\frakh)$ consists of semisimple elements by Lemma~\ref{lem:solvable}.  Hence $c_\lambda$ kills every nilpotent element of $\frakh$, so $\frakn\subseteq \Lie(R_u(P_\lambda))$.  But $\Lie(R_u(P_\lambda))$ consists of nilpotent elements, so $\frakn= \frakh\cap \Lie(R_u(P_\lambda))$.  This is an ideal of $\frakh$ because $\frakh\subseteq \frakp_\lambda$.
 
 (b). Let $\fraks$ be a maximal $G$-toral subalgebra of $\frakh$.  We claim that $c_\lambda(\fraks)= c_\lambda(\frakh)$.  Suppose not.  Now $\fraks$ is abelian and consists of ad-semisimple elements, so $\fraks$ acts completely reducibly on $\frakh$.  Let $\frakh_0= \frakc_\frakh(\fraks)$ be the trivial weight space for the action of $\fraks$ on $\frakh$; then, since the $\fraks$-weight spaces of $\frakh$ corresponding to non-zero weights consist of nilpotent elements and $c_\lambda$ annihilates all such, we have $c_\lambda(\frakh_0)= c_\lambda(\frakh)$.  Note that $\fraks\subseteq \frakh_0$ as $\fraks$ is abelian.
 
 By hypothesis, there exists $x\in \frakh_0$ such that $c_\lambda(x)\not\in c_\lambda(\fraks)$.  As $p$ is fabulous for $G$, $\frakc_\frakg(\fraks)= \Lie(C_G(\fraks))$ is Jordan-closed, by Remarks~\ref{rems:closure_invt}(b), and $\frakh$ is Jordan-closed by assumption, so $\frakh_0= \frakc_\frakh(\fraks)= \frakc_\frakg(\fraks)\cap \frakh$ is Jordan-closed.  Hence $x_s\in \frakh_0$ and $c_\lambda(x_s)= c_\lambda(x)\not\in c_\lambda(\fraks)$; in particular, $x_s\not\in \fraks$.  Since $x_s$ is semisimple and commutes with $\fraks$, the subalgebra $\fraks\oplus k\cdot x_s$ of $\frakh$ is toral by Lemma~\ref{lem:toral_augmentation}.  But this contradicts the choice of $\fraks$.  We deduce that $c_\lambda(\fraks)= c_\lambda(\frakh)$, which implies that $\frakh= \fraks\oplus \frakn$.
\end{proof}

Note that \cite[Ch.~VII, \S 3, Cor.~2]{Bou05}
yields a version of Corollary \ref{cor:solv_decompn} in characteristic $0$.  David Stewart (private communication) has suggested an alternative proof of Proposition~\ref{prop:solvable_Borel} under weaker assumptions on $p$, using \cite[Cor.\ 1.3]{premetstewart}.

\section{Characteristic 0}
\label{sec:char0}

We keep our assumption that $k$ is algebraically closed.  If $H$ is a $G$-cr subgroup of $G$ then $H$ is reductive, and the converse also holds if ${\rm char}(k)= 0$ \cite[Prop.~4.2]{serre2}.  This gives an intrinsic characterisation of $G$-cr subgroups in characteristic 0: a subgroup $H$ is $G$-cr if and only if it is reductive.  (Note that a reductive subgroup of $G$ need not be $G$-cr in positive characteristic: e.g.,~see \cite[Ex.~3.45]{BMR}.)  The situation for Lie algebras is not so clear-cut.

\begin{ex}
\label{ex:not_intrinsic}
 Consider the 1-dimensional Lie algebra $k$.  We may embed $k$ in $\fraksl_2$ as the subalgebra of traceless diagonal matrices, or as the subalgebra of strictly upper triangular matrices.  It is clear that in the first case, the image of $k$ is $\SL_2$-cr, while in the second it is not.
\end{ex}

\noindent This example shows that even in characteristic 0, we cannot determine whether a subalgebra $\frakh$ of $\frakg$ is $G$-cr by looking only at the intrinsic properties of $\frakh$.  We can trace the problem to the lack of a Jordan decomposition for $\frakh$: there is no intrinsic notion of semisimple and nilpotent elements.  We do have notions of ad-semisimple and ad-nilpotent elements, but they don't detect properties of $\frakz(\frakh)$.

There is an intrinsic characterisation for when a restricted subalgebra $\frakh$ of $\frakg$ is $G$-cr when the characteristic is positive and sufficiently large, using the notion of ``$p$-reductive'' subalgebras: see \cite[Cor.\ 10.3]{HeSt}.  In \emph{loc.\ cit.}\ one makes use of the extra structure arising from the $p$-power map on $\frakh$.  Below we give a counterpart to \cite[Cor.\ 10.3]{HeSt} by providing a characterisation of $G$-cr subalgebras of $\frakg$ in characteristic 0 (Theorem~\ref{thm:Lie_Gcr_crit}), and we give an explicit description of a semisimplification of an arbitrary subalgebra in characteristic 0.  Different methods from those of \emph{op.\ cit.}\ are required, as there is no notion of restricted structure here.

The content of Theorem~\ref{thm:Lie_Gcr_crit} in characteristic 0 follows quickly from work of Richardson --- see Remark~\ref{rem:rich} below.  We give an independent proof, however, as some of our results hold under the weaker hypothesis that $p$ is fabulous for $G$.

The next result is a Lie algebra counterpart of \cite[Thm.\ 3.46]{BMR}; the analogues to the separability conditions in \emph{loc.\ cit.} hold here because of the assumption on $p$.

\begin{prop}
\label{prop:ad_ss_Gcr}
 Suppose $p$ is fabulous for $G$.  Let $\frakh$ be a subalgebra of $\frakg$ such that $\frakh$ acts semisimply on $\frakg$.  Then $\frakh$ is $G$-completely reducible.
\end{prop}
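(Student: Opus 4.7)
The plan is to prove the contrapositive via the cocharacter-closed orbit characterisation of $G$-complete reducibility (Theorem~\ref{thm:mcninch1}), together with a Tits' Centre Theorem argument analogous to those in the proofs of Theorems~\ref{thm:mcninch2} and~\ref{thm:lienormal}(a), with the semisimplicity hypothesis supplying the crucial input. First I would reduce to the case where $\frakh$ is $G$-indecomposable: if $\frakh \subseteq \Lie(L)$ for a proper Levi $L$ of $G$, then $\Lie(L)$ is an $\frakh$-submodule of the semisimple $\frakh$-module $\frakg$, and hence is itself a semisimple $\frakh$-module. By induction on $\dim G$ the subalgebra $\frakh$ is $L$-cr, and therefore $G$-cr by Proposition~\ref{prop:ascent_descent}(ii) applied to the torus $S = Z(L)^0 \subseteq C_G(\frakh)$.

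Now suppose for contradiction that $\frakh$ is $G$-indecomposable but not $G$-cr. By Proposition~\ref{prop:liebuildingcr}, $\Delta_k^\frakh$ is not $\Delta_k$-completely reducible, so Tits' Centre Theorem (Theorem~\ref{thm:TCC}) supplies a proper parabolic $k$-subgroup $P$ of $G$ such that $\sigma_P \in \Delta_k^\frakh$ is fixed by every simplicial automorphism of $\Delta_k$ stabilising $\Delta_k^\frakh$. Since $N_G(\frakh)$ is smooth (as $p$ is fabulous for $G$) and clearly stabilises $\Delta_k^\frakh$, density of $k$-points gives $N_G(\frakh) \subseteq P$ and hence $\frakn_\frakg(\frakh) = \Lie(N_G(\frakh)) \subseteq \Lie(P)$. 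Writing $P = P_\lambda$ and $\fraku = \Lie(R_u(P_\lambda))$, the subalgebra $\fraku$ is $\frakh$-stable (as an ideal of $\frakp_\lambda$ containing $\frakh$ in its normaliser), and by the semisimplicity of $\frakg$ as an $\frakh$-module there is an $\frakh$-stable complement $\mathfrak{w}$ to $\fraku$ inside $\frakp_\lambda$, with $c_\lambda$ restricting to an $\frakh$-module isomorphism $\mathfrak{w} \cong \frakl_\lambda$.

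The main step is to produce $v \in R_u(P_\lambda)$ with $\Ad(v)(\frakh) \subseteq \frakl_\lambda$. Because $\frakh \subseteq \frakp_\lambda = \mathfrak{w} \oplus \fraku$, each $x \in \frakh$ decomposes uniquely as $x = m(x) + u(x)$ with $m(x) \in \mathfrak{w}$ and $u(x) \in \fraku$, and the ``$\fraku$-component'' map $u$ (pushed forward via $c_\lambda$ to a map out of $c_\lambda(\frakh)$) arises as a $1$-cocycle whose coboundary obstruction lives in a first cohomology group of $\frakh$ with coefficients in $\fraku$. The semisimplicity of $\fraku$ as an $\frakh$-module, inherited from that of $\frakg$, forces this group to vanish, and one may peel off the obstruction layer by layer along the descending central series of $R_u(P_\lambda)$, passing from Lie-algebra coboundaries to group-level conjugators using nilpotency of $\fraku$. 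This produces the desired $v$, and then $\Ad(v^{-1})(Z(L_\lambda)^0)$ is a non-central torus of $G$ centralising $\frakh$, so $\frakh$ lies in a proper Levi subgroup of $G$, contradicting the $G$-indecomposability reduction.

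The expected main obstacle is making the inductive cocycle/coboundary argument rigorous in the $p$-fabulous setting: one needs to handle the filtration of $R_u(P_\lambda)$ by its descending central series carefully, matching group-level conjugation to the Lie algebra cohomological data, which is where the smoothness guarantees implicit in the hypothesis that $p$ is fabulous are essential. In characteristic zero this step reduces to a form of Mostow's theorem on Levi decompositions, and as noted in Remark~\ref{rem:rich} the whole result then follows from classical work of Richardson.
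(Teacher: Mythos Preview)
Your approach diverges from the paper's and contains a genuine gap at the cohomological step. You assert that the semisimplicity of $\fraku$ as an $\frakh$-module forces the relevant $H^1(\frakh,\fraku)$ (or its graded pieces) to vanish, but semisimplicity of a module $V$ does \emph{not} imply $H^1(\frakh,V)=0$: already for $\frakh$ one-dimensional abelian and $V=k$ the trivial module one has $H^1(\frakh,V)\cong\frakh^*\neq 0$. Nothing prevents $\fraku/[\fraku,\fraku]$ from having trivial $c_\lambda(\frakh)$-summands, and for those the obstruction survives. You flag this as ``the expected main obstacle,'' but your diagnosis is off: the difficulty is not passing from Lie-algebra coboundaries to group-level conjugators, it is that the cocycle need not be a coboundary at all. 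Note too that if your cohomological step worked it would work for \emph{every} parabolic $P$ with $\frakh\subseteq\Lie(P)$, directly establishing $G$-complete reducibility; the Tits' Centre argument and the $G$-indecomposability reduction would then be redundant, which signals that the entire content of the proof has been pushed into precisely the step left unjustified.

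The paper's argument is completely different and avoids cohomology. If $\frakh$ were not $G$-cr, a generating tuple $\mathbf{x}$ would have non-closed $G$-orbit, so some $\lambda\in Y(G)$ takes it to $\mathbf{x}'$ with $\dim(G\cdot\mathbf{x}')<\dim(G\cdot\mathbf{x})$, whence $\dim\frakc_\frakg(\frakh')>\dim\frakc_\frakg(\frakh)$ by smoothness of centralisers (here $p$ fabulous is used). On the other hand, the hypothesis that $\frakh$ acts semisimply on $\frakg$ says precisely that $\ad_\frakg(\frakh)$ is $\GL(\frakg)$-cr (Remark~\ref{rem:linear}), so $\GL(\frakg)\cdot\ad_\frakg(\mathbf{x})$ is closed by Theorem~\ref{thm:mcninch1}; since $\ad_\frakg(\mathbf{x}')=\lim_{a\to 0}\lambda(a)\cdot\ad_\frakg(\mathbf{x})$ lies in this closed orbit, $\ad_\frakg(\mathbf{x})$ and $\ad_\frakg(\mathbf{x}')$ are $\GL(\frakg)$-conjugate, forcing $\frakc_\frakg(\frakh)$ and $\frakc_\frakg(\frakh')$ to have the same dimension---a contradiction.
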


\begin{proof}
 Suppose for a contradiction that $\frakh$ is not $G$-cr.  Choose ${\mathbf x}= (x_1,\ldots, x_m)\in \frakh^m$ for some $m\in \NN$ such that the $x_i$ span $\frakh$.  The orbit $G\cdot {\mathbf x}$ is not closed (Theorem~\ref{thm:mcninch1}), so there exists $\lambda\in Y(G)$ such that ${\mathbf x}':= \lim_{a\to 0} \lambda(a)\cdot {\mathbf x}$ exists and $G\cdot {\mathbf x}'$ is closed.  This implies that $\dim(G\cdot {\mathbf x})> \dim(G\cdot {\mathbf x}')$. Consequently, letting $\frakh'$ be the subalgebra generated by the components of ${\mathbf x}'$, we have  $\dim(C_G(\frakh'))> \dim(C_G(\frakh))$, so $$\dim(\frakc_\frakg(\frakh'))= \dim(\Lie(C_G(\frakh')))> \dim(\Lie(C_G(\frakh)))= \dim(\frakc_\frakg(\frakh)),$$
 where the equalities hold because $p$ is fabulous for $G$.
 
 Now since $\ad_\frakg(\frakh)$ is $\GL(\frakg)$-cr and 
 $\ad_\frakg({\mathbf x})$
 is a generating tuple of $\ad_\frakg(\frakh)$, 
 $\GL(\frakg)\cdot \ad_\frakg({\mathbf x})$ is closed, by Theorem~\ref{thm:mcninch1}.  Hence $\ad_\frakg({\mathbf x}')= \ad_\frakg\left(\lim_{a\to 0} \lambda(a)\cdot {\mathbf x}\right)= \lim_{a\to 0} \lambda(a)\cdot \ad_\frakg({\mathbf x})$ is $\GL(\frakg)$-conjugate to $\ad_\frakg({\mathbf x})$.  But $\frakc_\frakg(\frakh')$ and $\frakc_\frakg(\frakh)$ are precisely the subsets of $\frakg$ annihilated by $\ad_\frakg(\frakh')$ and $\ad_\frakg(\frakh)$, respectively, so $\frakc_\frakg(\frakh')$ and $\frakc_\frakg(\frakh)$ are $\GL(\frakg)$-conjugate, which implies they have the same dimension.  This gives a contradiction.  We conclude that $\frakh$ must be $G$-cr after all, so we are done.
\end{proof}

Before we proceed to Theorem~\ref{thm:Lie_Gcr_crit}, we need to recall some standard Lie algebra theory.  If $\frakh$ is a Lie algebra then we define $\rad(\frakh)$ to be the \emph{solvable radical} of $\frakh$: that is, the unique largest solvable ideal of $\frakh$.  We say that $\frakh$ is \emph{semisimple} if $\rad(\frakh)= 0$, \cite[\S 3.1]{Hum72}.  
If $\char(k)= 0$ then any finite-dimensional representation of a semisimple Lie algebra is completely reducible \cite[Thm.~6.3]{Hum72}, and any Lie algebra $\frakh$ has a \emph{Levi decomposition} $\frakh= \frakk\oplus \rad(\frakh)$, where $\frakk$ is a semisimple subalgebra of $\frakh$ (not necessarily an ideal) \cite[\S 6.8 Thm.~5]{BouChap1}.  In this case, if $\rad(\frakh)$ is $G$-toral then $\rad(\frakh)= \frakz(\frakh)$.  For if $x\in \frakh$ is semisimple then $x$ centralises $\rad(\frakh)$ by Lemma~\ref{lem:toral_augmentation}; but the set of semisimple elements of $\frakk$ is dense as $\frakk$ is semisimple, so $\frakk$ centralises $\rad(\frakh)$, so $\rad(\frakh)\subseteq \frakz(\frakh)$.  The reverse inclusion follows because $\frakz(\frakk)= 0$.

\begin{thm}
\label{thm:Lie_Gcr_crit}
 Let $\frakh$ be a subalgebra of $\frakg$.  Consider the following conditions:
 \begin{itemize}
  \item[(i)] $\frakh$ acts semisimply on $\frakg$.
  \item[(ii)] $\frakh$ is $G$-completely reducible.
  \item[(iii)] $\rad(\frakh)$ is $G$-toral.
 \end{itemize}
 If $p$ is fabulous for $G$ then (i) $\implies$ (ii) $\implies$ (iii).  If $\char(k)= 0$ then (i)--(iii) are equivalent.
\end{thm}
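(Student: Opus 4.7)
The plan is to dispose of the three implications in turn; the first two use only that $p$ is fabulous for $G$, while only the reverse direction in characteristic $0$ requires substantive work. First, (i) $\implies$ (ii) is immediate from Proposition~\ref{prop:ad_ss_Gcr}. For (ii) $\implies$ (iii), the solvable radical $\rad(\frakh)$ is a (characteristic) ideal of $\frakh$, so Theorem~\ref{thm:lienormal}(a) shows it is $G$-cr whenever $\frakh$ is; being solvable and $G$-cr, Lemma~\ref{lem:solvable} then gives that $\rad(\frakh)$ is $G$-toral. In characteristic $0$, $p$ is automatically fabulous by Remarks~\ref{rems:fabulous}(ii), so the only remaining implication is (iii) $\implies$ (i).

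To prove (iii) $\implies$ (i) in characteristic $0$, I will exploit the Levi decomposition $\frakh = \frakk \oplus \rad(\frakh)$ with $\frakk$ semisimple, together with the identification $\rad(\frakh) = \frakz(\frakh)$ recorded in the paragraph immediately preceding the theorem; in particular, the two summands mutually centralise one another. Fixing a torus $S$ of $G$ with $\rad(\frakh) \subseteq \Lie(S)$, the $S$-weight space decomposition of $\frakg$ shows that every element of $\rad(\frakh)$ acts on $\frakg$ by a semisimple operator, and these operators commute since $\rad(\frakh)$ is abelian. Weyl's theorem gives that $\frakg$ is a semisimple $\frakk$-module.

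To combine these two facts, I decompose $\frakg$ into $\frakk$-isotypic components $V^{(\lambda)} \cong W_\lambda \otimes M_\lambda$, with $W_\lambda$ the simple $\frakk$-module of class $\lambda$ and $M_\lambda$ the multiplicity space. Each $V^{(\lambda)}$ is $\rad(\frakh)$-stable because $\rad(\frakh)$ commutes with $\frakk$, and the $\rad(\frakh)$-action takes the form $1 \otimes \phi_\lambda$ for some representation $\phi_\lambda$ of $\rad(\frakh)$ on $M_\lambda$. Since $\rad(\frakh)$ is abelian and acts by commuting semisimple operators, $M_\lambda$ decomposes as $\bigoplus_j M_\lambda^{(j)}$ into common eigenlines, so $V^{(\lambda)} = \bigoplus_j W_\lambda \otimes M_\lambda^{(j)}$ as a direct sum of simple $\frakh$-submodules. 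Thus $\frakg$ is semisimple as an $\frakh$-module, establishing (i).

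The main obstacle I anticipate is this final reassembly step: the commuting semisimple actions of $\frakk$ and of $\rad(\frakh)$ must be fused into a semisimple action of $\frakh$. This is precisely where the identification $\rad(\frakh) = \frakz(\frakh)$ is essential, since otherwise the two summands would not necessarily act on $\frakg$ by commuting operators and the isotypic decomposition argument would break down.
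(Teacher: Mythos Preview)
Your proposal is correct and follows essentially the same route as the paper: the implications (i)$\Rightarrow$(ii) and (ii)$\Rightarrow$(iii) are handled by the same citations (Proposition~\ref{prop:ad_ss_Gcr}, Theorem~\ref{thm:lienormal}(a), Lemma~\ref{lem:solvable}), and for (iii)$\Rightarrow$(i) in characteristic~$0$ the paper likewise uses the Levi decomposition $\frakh=\frakk\oplus\rad(\frakh)$, notes that $\frakk$ acts semisimply by Weyl's theorem and that $\rad(\frakh)$ acts by commuting semisimple operators, and then concludes that $\frakh$ acts semisimply since $\rad(\frakh)=\frakz(\frakh)$ commutes with $\frakk$. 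The only difference is that the paper dismisses this last step with ``it follows easily'', whereas you spell it out via the $\frakk$-isotypic decomposition and Schur's lemma on the multiplicity spaces; both are valid.
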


\begin{proof}
 Suppose $p$ is fabulous for $G$.  If (i) holds then (ii) holds by Proposition~\ref{prop:ad_ss_Gcr}, while if (ii) holds then (iii) holds by Theorem~\ref{thm:lienormal} and Lemma~\ref{lem:solvable}.  Now suppose $\char(k)= 0$.  To complete the proof, it's enough to show that (iii) implies (i).  So suppose $\rad(\frakh)$ is $G$-toral.  Write $\frakh= \frakk\oplus \rad(\frakh)$, where $\frakk$ is semisimple.  Since $\char(k)= 0$, $\frakk$ acts semisimply on $\frakg$.  Now $\rad(\frakh)$ consists of pairwise commuting $\ad$-semisimple elements, so  $\rad(\frakh)$ acts semisimply on $\frakg$.  But $\rad(\frakh)$ commutes with $\frakk$, so it follows easily that $\frakh$ acts semisimply on $\frakg$, as required.
\end{proof}

Let $f\colon G\to M$ be a homomorphism of connected reductive groups. We say that $f$ is \emph{non-degenerate} if $\ker(f)^0$ is a torus.
The following is the Lie algebra counterpart of \cite[Cor.~4.3]{serre2}, see also \cite[Lem.~2.12(ii)]{BMR}.

\begin{cor}
\label{cor:Gcr_funct}
 Suppose ${\rm char}(k)= 0$.  
 Let $f\colon G\to M$ be a homomorphism of connected reductive groups. 
 Let $\frakh$ be a subalgebra of $\frakg$.  If $\frakh$ is $G$-completely reducible, then $df(\frakh)$ is $M$-completely reducible.  Conversely, if $f$ is non-degenerate and $df(\frakh)$ is $M$-completely reducible then $\frakh$ is $G$-completely reducible.
\end{cor}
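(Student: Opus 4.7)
The plan is to reduce both directions to Theorem \ref{thm:Lie_Gcr_crit}, which in characteristic $0$ characterises $G$-complete reducibility as toralness of the solvable radical. Two standard characteristic-$0$ facts play a supporting role. Firstly, for any surjective Lie algebra homomorphism $\phi\colon L \to L'$ of finite-dimensional Lie algebras one has $\phi(\rad(L)) = \rad(L')$: indeed $\phi(\rad(L))$ is a solvable ideal of $L'$, and $L'/\phi(\rad(L))$ is a quotient of the semisimple algebra $L/\rad(L)$ and so is itself semisimple, which pins $\rad(L')$ down. Secondly, the differential $df$ of a morphism of algebraic groups preserves abstract Jordan decomposition in characteristic $0$, so $df(x_s) = df(x)_s$ and $df(x_n) = df(x)_n$ for all $x \in \frakg$.

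For the forward direction, I would assume $\frakh$ is $G$-cr and use Theorem \ref{thm:Lie_Gcr_crit} to place $\rad(\frakh) \subseteq \Lie(S)$ for some torus $S$ of $G$. Applying the radical identity to the surjection $df|_\frakh\colon \frakh \to df(\frakh)$ and pushing forward, one obtains
\[
\rad(df(\frakh)) \;=\; df(\rad(\frakh)) \;\subseteq\; df(\Lie(S)) \;=\; \Lie(f(S)).
\]
Since $f(S)$ is a torus of $M$, $\rad(df(\frakh))$ is $M$-toral, and Theorem \ref{thm:Lie_Gcr_crit} applied in $M$ gives that $df(\frakh)$ is $M$-cr.

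For the converse, suppose $f$ is non-degenerate, i.e.\ $\ker(f)^0$ is a torus, and that $df(\frakh)$ is $M$-cr. Theorem \ref{thm:Lie_Gcr_crit} together with the radical identity again yields that $df(\rad(\frakh)) = \rad(df(\frakh))$ is $M$-toral; in particular, $df(x)$ is semisimple for every $x \in \rad(\frakh)$. I would then deduce that $\rad(\frakh)$ itself consists of semisimple elements. For $x \in \rad(\frakh)$ with Jordan decomposition $x = x_s + x_n$ in $\frakg$, compatibility of $df$ with Jordan decomposition forces $df(x_n) = 0$, so $x_n \in \Lie(\ker(f)^0)$; by non-degeneracy $\ker(f)^0$ is a torus, so $x_n$ is both nilpotent and semisimple, whence $x_n = 0$. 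Thus $\rad(\frakh)$ consists of semisimple elements, is $G$-toral by Lemma \ref{lem:semisimple_toral}, and $\frakh$ is $G$-cr by Theorem \ref{thm:Lie_Gcr_crit}.

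None of the individual steps is difficult; the linchpin is the moment in the converse where non-degeneracy is invoked, since it is exactly what prevents nilpotent parts of radical elements from hiding in $\ker(df)$. Without this hypothesis $\ker(f)^0$ could have a unipotent factor permitting such hiding, which is why the non-degeneracy condition cannot be dropped.
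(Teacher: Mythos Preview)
Your proof is correct and follows essentially the same route as the paper's: both directions are reduced to Theorem~\ref{thm:Lie_Gcr_crit} via the identification $\rad(df(\frakh))=df(\rad(\frakh))$, and the converse is handled by showing that nilpotent parts of elements of $\rad(\frakh)$ land in $\Lie(\ker(f)^0)$, hence vanish. The only cosmetic difference is that the paper obtains the radical identity by writing out a Levi decomposition $\frakh=\frakk\oplus\rad(\frakh)$ explicitly, whereas you package it as the general fact that a surjection of Lie algebras in characteristic~$0$ maps radical onto radical; the underlying argument is the same.
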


\begin{proof}
  Write $\frakh= \frakk\oplus \frakz$, where $\frakk$ is semisimple and $\frakz= \rad(\frakh)$.  Then $df(\frakk)$ is semisimple and $df(\frakz))$ is solvable, so $df(\frakh)= df(\frakk)\oplus df(\frakz)$.  It follows that $\rad(df(\frakh))= df(\frakz)$.  If $\frakh$ is $G$-cr then $\frakz$ is $G$-toral by Theorem~\ref{thm:Lie_Gcr_crit}, so there is a torus $S$ of $G$ such that $\frakz\subseteq \fraks$.  Then $df(\frakz)$ is contained in the Lie algebra of the torus $f(S)$, so $df(\frakz)$ is $M$-toral.  Hence $df(\frakh)$ is $M$-cr by Theorem~\ref{thm:Lie_Gcr_crit}.
  
  Conversely, suppose $f$ is non-degenerate and $df(\frakh)$ is $M$-cr.  Then $df(\frakz)= \rad(df(\frakh))$ is $M$-toral by Theorem~\ref{thm:Lie_Gcr_crit} and thus consists of semisimple elements.  Since $\ker(df)= \Lie(\ker(f))$ is central in $\frakg$ and also consists of semisimple elements, it follows that $\frakz$ consists of semisimple elements.  Hence $\frakz$ is $G$-toral (Lemma~\ref{lem:semisimple_toral}), so $\frakh$ is $G$-cr by Theorem~\ref{thm:Lie_Gcr_crit}.
\end{proof}

\begin{rem}
\label{rem:closed_funct}
 Applying Theorem~\ref{thm:mcninch1}, we can translate Corollary~\ref{cor:Gcr_funct} into more geometric language.  Let $m\in \NN$ and let $(x_1,\ldots, x_m)\in \frakg^m$.  It follows from Corollary~\ref{cor:Gcr_funct} applied to the subalgebra $\frakh$ generated by the $x_i$ that if $G\cdot (x_1,\ldots, x_m)$ is closed then $M\cdot df(x_1,\ldots, x_m)$ is closed, and that the converse also holds if $f$ is non-degenerate.
\end{rem}

\begin{cor}
\label{cor:Jordan-closed}
 Suppose ${\rm char}(k)= 0$.  Let $\frakh$ be a $G$-completely reducible subalgebra of $\frakg$.  Then $\frakh$ is Jordan-closed.
\end{cor}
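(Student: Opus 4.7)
The plan is to use the Levi decomposition of $\frakh$ together with the characterisation of $G$-complete reducibility in characteristic $0$ given by Theorem~\ref{thm:Lie_Gcr_crit}. Since $\char(k)=0$ and $\frakh$ is $G$-cr, that theorem yields that $\rad(\frakh)$ is $G$-toral; in particular, every element of $\rad(\frakh)$ is semisimple, and by the observation recorded just before the theorem one actually has $\rad(\frakh)=\frakz(\frakh)$. Pick a Levi decomposition $\frakh=\frakk\oplus\rad(\frakh)$ with $\frakk$ semisimple; then $\frakk$ commutes with $\rad(\frakh)$.

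I would next verify Jordan-closedness of the two summands separately. The subalgebra $\frakk$ is semisimple in characteristic $0$, hence algebraic by \cite[Lem.~3.2]{rich} as noted in Remarks~\ref{rems:closure_invt}(b), and therefore Jordan-closed. The radical $\rad(\frakh)$ consists of semisimple elements, so it is trivially Jordan-closed.

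To finish, take $x\in\frakh$ and write $x=y+z$ with $y\in\frakk$ and $z\in\rad(\frakh)$. Since $y$ and $z$ commute and $z$ is semisimple, the elements $y_s,y_n,z$ pairwise commute (as $z$ commutes with all of $\frakk$, hence with both Jordan parts of $y$). Thus $y_s+z$ is a sum of two commuting semisimple elements and hence semisimple, while $y_n$ is nilpotent and commutes with $y_s+z$. By uniqueness of the Jordan decomposition in $\frakg$, one concludes $x_s=y_s+z$ and $x_n=y_n$. Jordan-closedness of $\frakk$ gives $y_s,y_n\in\frakk$, so $x_s\in\frakk+\rad(\frakh)=\frakh$ and $x_n\in\frakk\subseteq\frakh$, proving that $\frakh$ is Jordan-closed.

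The only mildly delicate point is ensuring the Jordan decomposition of $x$ in $\frakg$ is computed componentwise through the Levi decomposition; this reduces entirely to the commutation $[\frakk,\rad(\frakh)]=0$ combined with uniqueness of the Jordan decomposition in the reductive Lie algebra $\frakg$, so no real obstacle arises once Theorem~\ref{thm:Lie_Gcr_crit} is in hand.
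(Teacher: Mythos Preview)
Your proof is correct and follows essentially the same approach as the paper: take a Levi decomposition $\frakh=\frakk\oplus\rad(\frakh)$, use Theorem~\ref{thm:Lie_Gcr_crit} to see that $\rad(\frakh)$ is $G$-toral (hence Jordan-closed and equal to $\frakz(\frakh)$), observe that $\frakk$ is algebraic and hence Jordan-closed by \cite[Lem.~3.2]{rich}, and conclude. The paper's proof simply says ``it now follows easily that $\frakh$ is Jordan-closed'' at the end, whereas you spell out this last step explicitly via the componentwise Jordan decomposition $x_s=y_s+z$, $x_n=y_n$; your added detail is correct and makes the argument self-contained.
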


\begin{proof}
 We can write $\frakh= \frakk\oplus \rad(\frakh)$ for some semisimple subalgebra $\frakk$ of $\frakg$.  Now $\rad(\frakh)$ is $G$-toral by Theorem~\ref{thm:Lie_Gcr_crit}, so $\rad(\frakh)$ is Jordan-closed and $\rad(\frakh)= \frakz(\frakh)$.  By \cite[Lem.\ 3.2]{rich}, $\frakk$ is algebraic, so $\frakk$ is Jordan-closed.   It now follows easily that $\frakh$ is Jordan-closed.
\end{proof}

\begin{rem}
\label{rem:no_characterisation}
 The equivalence of (ii) and (iii) in Theorem~\ref{thm:Lie_Gcr_crit} can fail in positive characteristic.  For example, let $p$, $G$ and $\frakh$ be as in Example~\ref{ex:PGL_2}.  We observed earlier that $\frakh$ is $G$-ir, but it is easy to check that every element of $\frakh$ is nilpotent.
 
Conversely, let $k$ be algebraically closed of characteristic $3$, let $M= \SL_2$, let $V$ be the natural module for $M$, and consider the action of $M$ on the third symmetric power $W:=S^3V$---this gives an embedding of $M$ inside $G:=\GL_4$ and gives rise to a faithful representation of the Lie algebra $\frakm$.
We claim that $W$ is not semisimple as an $\frakm$-module, so $\frakm$ is not $G$-cr even though is has trivial radical.
The four-dimensional module $W$ has a basis $x^3,x^2y,xy^2,y^3$, where $x$ and $y$ can be identified with the standard basis vectors for $V$.
The basis vectors $x^3$ and $y^3$ are killed by the action of $\frakm$, so span a two-dimensional submodule with a trivial $\frakm$-action. 
The quotient by this submodule is a copy of the natural module $V$, which is simple, but there is no submodule of $W$ isomorphic as an $\frakm$-module to $V$, as direct calculation with the standard generators for $\frakm\cong \mathfrak{sl}_2$ will easily verify.

 Corollary~\ref{cor:Jordan-closed} can also fail in positive characteristic.  For, let $\char(k) = 2$ and let $G= \PGL_2(k)\times k^*\times k^*$ and let $\frakh$ be the subalgebra of $\frakg$ spanned by the elements of the form $\left(\ovl{\twobytwo{0}{a}{0}{0}}, a, 0\right)$ and $\left(\ovl{\twobytwo{0}{0}{b}{0}}, 0, b\right)$ for $a,b\in k$.  Clearly $\frakh$ is $G$-cr but is not Jordan-closed.
\end{rem}

\begin{rem}
\label{rem:rich}
 Richardson's seminal paper \cite{rich} laid the foundations for the study via GIT of $G$-complete reducibility for subgroups and subalgebras.  We explain how to obtain Theorem~\ref{thm:Lie_Gcr_crit} in the characteristic 0 case from Richardson's results.  Suppose $\char(k)= 0$.  If $\frakh$ is a subalgebra of $\frakg$ then there is a unique smallest subalgebra $\frakh^{\rm alg}$ of $\frakg$ such that $\frakh\subseteq \frakh^{\rm alg}$ and $\frakh^{\rm alg}$ is algebraic, and there is a unique connected subgroup $A(\frakh)$ of $G$ such that $\frakh^{\rm alg}= \Lie(A(\frakh))$.  In fact, the map $K\mapsto \Lie(K)$ gives an inclusion-preserving bijection from the set of connected subgroups of $G$ to the set of algebraic subalgebras of $\frakg$ \cite[13.1 Thm.]{humphreys}.
 
 Now let $m\in \NN$, let $x_1,\ldots, x_m\in \frakg$, let $\frakh$ be the subalgebra of $\frakg$ generated by the $x_i$ and let $M= A(\frakh)$.  Richardson showed that the following are equivalent (see \cite[Lem.\ 3.5 and Thm.\ 3.6]{rich}): (a) $G\cdot (x_1,\ldots, x_m)$ is closed; (b) $M$ is reductive; (c) $M$ acts semisimply on $\frakg$.  These conditions are equivalent to $\frakh$ being $G$-cr, by Theorem~\ref{thm:mcninch1}.  Note that if $M$ is reductive then $\rad(\frakh^{\rm alg})= \rad(\Lie(M))= \Lie(Z(M)^0)$ is $G$-toral, and it is not hard to see that the converse holds.  Moreover, it also straightforward to show that $\rad(\frakh^{\rm alg})= \rad(\frakh)^{\rm alg}$, and that $\rad(\frakh)^{\rm alg}$ is $G$-toral if and only if $\rad(\frakh)$ is $G$-toral.  The characteristic 0 case of Theorem~\ref{thm:Lie_Gcr_crit} now follows.
 \end{rem}

\begin{ex}
\label{ex:char0_ss}
 Assume $\char(k)= 0$.  We now give an explicit description of the $k$-semi-simplification of a subalgebra $\frakh$ of $\frakg$.  Write $\frakh= \frakk\oplus \frakm$, where $\frakk$ is semisimple and $\frakm= \rad(\frakh)$. 
 Then $\frakk$ is Jordan-closed (see the proof of Corollary~\ref{cor:Jordan-closed}), and it follows easily that $\frakh^J= \frakk\oplus \frakm^J$.  Now $\frakm^J$ is solvable by Proposition~\ref{prop:solvable_Borel}.  Write $\frakm^J= \fraks\oplus \frakn$ as in Corollary~\ref{cor:solv_decompn}.  We claim that $\frakk\oplus \fraks$ is a semisimplification of both $\frakh$ and $\frakh^J$.
 
 To see this, choose a parabolic subgroup $P$ of $G$ such that $P$ yields a semisimplification of $\frakh^J$.  Since $\frakk\oplus \fraks$ is $G$-cr by Theorem~\ref{thm:Lie_Gcr_crit}, we can choose $\lambda\in Y(G)$ such that $P_\lambda= P$ and $\lambda$ centralises $\frakk\oplus \fraks$.  Now $\frakm^J$ is an ideal of $\frakh^J$, so $(P_\lambda, L_\lambda)$ also yields a semisimplification of $\frakm^J$ by Theorem~\ref{thm:lienormal}(b).  Hence $c_\lambda$ kills $\frakn$ by Corollary~\ref{cor:solv_decompn}(a).  It follows that $c_\lambda(\frakh^J)= \frakk\oplus \fraks$.
 
 If $a\in k^*$ then $\lambda(a)\cdot \frakm\subseteq \frakp_\lambda$ and $c_\lambda(\lambda(a)\cdot \frakm)= c_\lambda(\frakm)$ since $\lambda$ centralises $\fraks$.  Hence $c_\lambda(\frakm)= c_\lambda(\frakm')$, where $\frakm'$ is the subspace of $\frakp_\lambda$ spanned by the subspaces $\lambda(a)\cdot \frakm$ for $a\in k^*$.  It is clear that $\frakm'$ contains $\frakm^J$, since $\frakn$ is contained in the sum of the nonzero weight spaces for $\lambda$ acting on $\frakp_\lambda$.  Hence $c_\lambda(\frakm)= c_\lambda(\frakm')= c_\lambda(\frakm^J)= \fraks$, and we conclude that $c_\lambda(\frakh)= \frakk\oplus \fraks$.  But $\frakk\oplus \fraks$ is $G$-cr, so the rest of the claim follows.
\end{ex}


\bigskip

\noindent {\bf Acknowledgments}: The research of this work was supported in part by 
the DFG (Grant \#RO 1072/22-1 (project number: 498503969) to G.~R\"ohrle). 
We thank the referees for their careful reading of the paper and their helpful comments clarifying some points.  We are especially grateful to one referee for providing us with Proposition~\ref{prop:finite}, which is a substantial improvement on a result from the original version.


\end{document}